\newtheorem{theorem}{Theorem}[section]
\newtheorem{lemma}[theorem]{Lemma}
\newtheorem{proposition}[theorem]{Proposition}
\theoremstyle{definition}
\newtheorem{definition}[theorem]{Definition}
\newtheorem{example}[theorem]{Example}
\newtheorem{remark}[theorem]{Remark}
\numberwithin{equation}{section}
\DeclareMathOperator*{\argmin}{argmin}
\newcommand{\ttop}{\mathsf{T}}
\newcommand{\x}{x}%{\mathbf x}
\newcommand{\y}{y}%{\mathbf y}
\newcommand{\z}{z}%{\mathbf z}
\newcommand{\h}{h}%{\mathbf h}
\newcommand{\vv}{v}%{\mathbf v}
\newcommand{\dd}{d}%{\mathbf d}
\newcommand{\uu}{u}%{\mathbf u}
\newcommand{\g}{g}%{\mathbf g}
\newcommand{\p}{p}%{\mathbf p}
\newcommand{\q}{q}%{\mathbf q}
\renewcommand{\p@algorithm}{\arabic{algorithm}\expandafter\@gobble}
\newcommand{\PARAMETERS}{\item[\textbf{Parameters:}]}
\newcounter{step}[algorithm]
\newcommand\STEP[2][\(\triangleright\)]{%
	\refstepcounter{step}
	\vskip 0.25\baselineskip
	\item[]\hskip -\algorithmicindent #1 \textbf{Step \arabic{step}}%
	\ifthenelse{\equal{\unexpanded{#2}}{}}{}{ (\texttt{#2})}%
	\textbf{.}%
}
\newenvironment{algo}{\algo}{}
\def\algo#1\end{%
	\noindent\fbox{%
	\begin{minipage}[b]{\dimexpr\columnwidth-\algorithmicindent\relax}
	\begin{algorithmic}
	#1
	\end{algorithmic}
	\end{minipage}
	}%
\end}
\title[Nonsmooth trust-regions]{Cutting plane oracles for non-smooth trust-regions}
\author[D. Noll]{Dominikus Noll}
\address[D. Noll]{Universit\'e de Toulouse, Institut de Math\'ematiques, France}
\email{{\tt dominikus.noll@math.univ-toulouse.fr}}
\keywords{Non-smooth optimization $\cdot$  non-smooth trust region method $\cdot$ cutting plane oracle $\cdot$ global convergence $\cdot$ forward-backward splitting}
\subjclass[2010]{65K05,90C56,90C90}
\begin{document}

\begin{abstract}
We prove global convergence of a bundle trust region algorithm for non-smooth non-convex optimization,
where cutting planes are generated by oracles
respecting four basic rules. The benefit  is that convergence theory applies to a large variety of
methods encountered in practice. This includes in particular the method of downshifted tangents,
for which previously no convergence result in the trust region framework was known. We also show that certain splitting techniques can be seen as special cases
of bundle trust region techniques. 
\end{abstract}

\maketitle

%%%%%%%%%%%%%%%%%%%%%%%%%%%%%%%%%%%%%%%%%%%%%%%%%%%%%%%%%%%%%%%%%%%%%%%%%%%%%%%%

\section{Introduction}
We consider optimization  problems of the form
\begin{equation}
\label{program}
\min_{\x \in C} f(\x)
\end{equation}
where $f:\mathbb R^n \to \mathbb R$ is locally Lipschitz, but neither differentiable, nor convex, and where $C$ is a closed convex set in $\mathbb R^n$, which is typically
of a simple structure, such as a polyhedron, or a semi-definite set. 
We discuss a non-smooth  trust region algorithm, which 
generates local models of $f$ by accumulating information from cutting planes at unsuccessful trial steps. We prove global convergence of our algorithm
in the sense that all accumulation points $\x^*$ of the sequence of
serious iterates $\x^j$  are critical points of (\ref{program}), i.e., satisfy $0\in \partial f(\x^*)+N_C(\x^*)$.  

In \cite{ANR:2016,ANR:18} we had discussed a trust region method,
where model-based cutting planes 
were used.  While this type of oracle is suited for a number of applications, we presently cover a larger variety
of oracles encountered in practice. This includes in particular
the well-know method of downshifted tangents, for which convergence in the  trust regions framework had so far not been established.
Our global convergence result applies to all 
cutting plane oracles   satisfying four basic rules.

We answer the question posed in \cite{rus},  whether aggregation  developed in \cite{kiwiel_aggregation} for the convex bundle method 
to limit the number of cuts,
could also be used in trust regions for the same purpose. In \cite{NPR:08} we had justified aggregation for the non-convex bundle method,
and  in \cite{ANR:18} for model based oracles in trust regions under the additional hypothesis that a positive definite second-order
term was used in the working model. Here we complete our analysis by showing that this hypothesis cannot be removed.
See
 \cite[Remark 10]{ANR:2016} and also
\cite{zowe,zowe2,hare,saga} for this topic. 

Convergence of bundle or trust region bundle methods requires that the objective $f$
admits a {\em strict model}. Models of locally Lipschitz functions were introduced in \cite{NPR:08} and may be understood as non-smooth substitutes for the first-order Taylor expansion.
Every model gives rise to a cutting plane oracle, hence to a trust region or bundle method, and conversely, every cutting plane oracle generates a model as its upper envelope, so 
both notions
are closely related.
Strictness of a model in the non-smooth setting is the
analogue to strict differentiability in the smooth setting. Strictness of an oracle decides whether or not the 
corresponding bundle or trust region method converges.

While the present work concentrates on theoretical convergence aspects, we
 have applied non-convex non-smooth bundle and trust region methods successfully
to applications in mechanical contact problems, control and system theory, and operations research. For instance
\cite{gabarrou} applies a bundle technique with downshift to design a flight controller for a civil aircraft.  In \cite{eigen}
a model-based bundle method is used for partial eigenstructure assignment, with applications
to a launcher in atmospheric flight and to decoupling motions of a civil aircraft.
In \cite{IJRNC,ANR:18,RNRB} the model-based trust-region method is applied to
$H_\infty$-control of infinite-dimensional systems, including boundary control of PDEs.  In \cite{gwinner}
we use our model-based technique in a delamination study in destructive testing in the material sciences.
A non-smooth approach to reliable flight control is  \cite{encyc}, where we also give a survey of recent relevant non-smooth methods in control. 
In \cite{memory} a bundle method with downshift is 
used to minimize the memory stored in a system, and an evacuation scenario of a fairground in case of an emergency 
is solved. In \cite{ANP:2008,ANP:2009} a non-convex spectral bundle method is used  to solve bilinear matrix inequalities.
This corresponds to cases where an infinity of cutting planes arise simultaneously. In \cite{simoes} a frequency shaping control
technique is developed using  model-based bundling.  Details on how non-linear constraints are handled are given in \cite{gabarrou,simoes,encyc}.
The first non-convex bundle method with convergence certificate under inexact subgradients and function values is \cite{inexact}, where downshifted tangents are used.
The model-based trust-region bundle method has been used for
parametric and mixed robust control, one of the most
challenging problems in feedback control design \cite{ANR:2016,AN:16,ADN:2015,AAN:2017}.   We have also used the trust region bundle method within a 
branch-and-bound approach to global optimization problems like computing the distance to instability of a controlled system,
see \cite{RNA:17,siam}.  All these techniques can be seen as special cases of the present
abstract framework. 

%{\color{blue}
%essentially smooth Lipschitz functions \cite{borwein_moors}}

The structure of the paper is as follows. In section \ref{sect_model} we discuss 
when a function $f$ admits a strict model. Section \ref{sect_oracle} introduces rules for cutting plane oracles and relates them to models.
We recall how cutting planes are used to build working models.
Section \ref{optimize} presents the bundle trust region algorithm and  its elements. A major difference with classical trust regions occurs 
in our management of the trust region radius. The central section \ref{convergence}  gives the global convergence proof for the algorithm. 
In section \ref{applications} we show that splitting techniques, the proximal point method, but also classical
gradient oriented and quasi Newton methods  are special cases of our oracle based bundle trust region concept.   
%We then obtain an extension of Kiwiel's tilted tangent oracle to non-convex objectives.
Extensions to constraint programs $\min\{f(\x): c(\x)\leq 0, \x\in C\}$ can be obtained by similar techniques via suitable 
progress functions in the sense of  \cite{polak2,mifflin2,mifflin3},  or \cite{gabarrou,simoes,encyc,dao}, or by multi-objective methods \cite{simoes,multidisk,multiband}.

\section{The model concept}
\label{sect_model}
The basic tool in bundle and bundle trust-region methods is
the cutting plane. Its rationale \cite{rus,lemarechal1,lemarechal2} is that if an unsuccessful trial step $\y$ (called a	 null step)  in the neighborhood of the current serious iterate $\x$ is made,
then a cutting plane to $f$ at $\y$ should be included in the working model in order to orient the search for a new $\x^+$ away from $\y$ in the next trial. 
In the convex case cuts are simply tangents to $f$ at $\y$, but for non-convex $f$, different ways to obtain cuts are needed.
In order to understand
how cutting planes are generated, we shall need 
the notion of a {\em model} of $f$ near an iterate $\x$, which can be considered as a non-smooth analogue of the first-order Taylor expansion of $f$ at $\x$. 
In this chapter, the theoretical background of the model concept is recalled.

\subsection{Model axioms}
Let
$f:\mathbb R^n \to \mathbb R$ be a locally Lipschitz function. Following \cite{NPR:08,cutting},  a function
$\phi:\mathbb R^n \times \mathbb R^n \to \mathbb R$ is called a {\em model} of $f$ if it satisfies the following
properties:
\begin{itemize}
\item[$(M_1)$] $\phi(\cdot,\x)$ is convex, $\phi(\x,\x) = f(\x)$, and $\partial_1\phi(\x,\x) \subset \partial f(\x)$.
\item[$(M_2)$] If $\y_k\to \x$, then there exist $\epsilon_k\to 0^+$ such that $f(\y_k) \leq \phi(\y_k,\x)+ \epsilon_k\|\y_k-\x\|$.
\item[$(M_3)$] If $\x_k\to \x, \y_k\to \y$, then $\displaystyle\limsup_{k\to\infty} \phi(\y_k,\x_k) \leq \phi(\y,\x)$.
\end{itemize}

\begin{remark}
We may interpret $\phi(\cdot,\x)$ as a substitute for the first-order Taylor expansion of  $f$ at $\x$.  
This is highlighted by the fact that every locally Lipschitz function $f$ has the so-called  {\em standard model}
\[
\phi^\sharp(\cdot,\x) = f(\x) + f^\circ(\x,\cdot - \x),
\]
where $f^\circ(\x,\dd)$ is the Clarke directional derivative at $\x$ in direction $\dd$. However,
a function $f$ may have several models, so the notion of a  model, unlike the Taylor expansion, claims no uniqueness.
\end{remark}

\begin{remark}
Composite functions $f = \psi \circ F$ with convex $\psi$ and  $F$ of class $C^1$
admit the so-called {\em natural model} $\phi(\cdot,\x) = \psi\left(  F(\x) + F'(\x)(\cdot - \x) \right)$, see \cite{ANP:2008,ANP:2009,NPR:08,cutting}.
Consequently, every lower-$C^2$ function admits a strict model, because by \cite{RW98} it can be represented in the form $f = \psi \circ F$ on every bounded set.
\end{remark}

%\begin{remark}
%\label{max_eig}
%A typical example of the natural model $\phi$ arises in eigenvalue optimization,  when $f$ is a maximum eigenvalue function
%$f(\x) = \lambda_1 \left( F(\x)\right)$, with $F:\mathbb R^n \to \mathbb S^m$ a $C^1$ mapping into the space of $m\times m$ Hermitian matrices.
%We take
%$\phi(\y,\x) = \lambda_1 \left( F(\x) + F'(\x)(\y-\x) \right)$. See \cite{ANP:2008,ANP:2009,NPR:08}.
%\end{remark}

\begin{remark}
When $f$  is convex,  its natural model is $\phi(\cdot,\x) = f$.
We say that a convex function is its own natural model. Note that convex $f$ have still their standard model $\phi^\sharp$, which
is different from $f$ unless $f$ is affine. 
\end{remark}

%\begin{remark}
%\label{natural_max}
%Typical examples of natural models arise e.g. for semi-infinite max-operators
%$f(\x) = \max_{i \in I} f_i(\x)$, where the $f_i$ are class $C^1$ functions. Then the natural model
%of $f$ at $\x$ is $\phi(\cdot,\x) = \max_{i\in I} f_i(\x) + \nabla f_i(\x)^\ttop (\cdot - \x)$. 
%\end{remark}

%For a proof of the following see \cite[Lemmas 1,2]{down}:

\begin{proposition}
\label{smallest}
{\rm (See \cite[Lemmas 1,2]{cutting}).}
The standard model $\phi^\sharp$ is the smallest model of $f$, i.e., $\phi^\sharp \leq \phi$ for any other model $\phi$ of $f$.
\hfill $\square$
\end{proposition}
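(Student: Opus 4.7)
My plan is to reduce the statement to showing that $f^\circ(x,d)\leq \phi(x+d,x)-f(x)$ for every $x,d\in\mathbb{R}^n$, since $\phi^\sharp(x+d,x)=f(x)+f^\circ(x,d)$. The idea is to realize the left-hand side along a sequence of differentiability points converging to $x$, apply the axioms with base point shifted to each such point, and then pass to the limit using the upper semicontinuity property $(M_3)$.

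First, since $f$ is locally Lipschitz, Rademacher's theorem yields the representation $\partial f(x)=\operatorname{co}\{\lim_k \nabla f(z_k):z_k\to x,\, f\text{ differentiable at }z_k\}$, and the maximum defining $f^\circ(x,d)=\max\{g^\top d:g\in\partial f(x)\}$ is attained at an extreme point. Hence I can select a sequence $y_k\to x$ of differentiability points of $f$ with $\nabla f(y_k)^\top d\to f^\circ(x,d)$.

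For each fixed $y_k$, differentiability gives $\nabla f(y_k)^\top d=\lim_{t\downarrow 0}[f(y_k+td)-f(y_k)]/t$. Applying axiom $(M_2)$ with base point $y_k$ along the sequence $y_k+td\to y_k$ gives $f(y_k+td)\leq \phi(y_k+td,y_k)+\epsilon(t)\,t\|d\|$ with $\epsilon(t)\to 0^+$. Since $\phi(y_k,y_k)=f(y_k)$ by $(M_1)$, and since the convex function $t\mapsto \phi(y_k+td,y_k)$ has a non-decreasing difference quotient from zero, $[\phi(y_k+td,y_k)-\phi(y_k,y_k)]/t\leq \phi(y_k+d,y_k)-f(y_k)$ for $t\in(0,1]$. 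Combining these and letting $t\downarrow 0$ yields $\nabla f(y_k)^\top d\leq \phi(y_k+d,y_k)-f(y_k)$. Letting $k\to\infty$, axiom $(M_3)$ applied with $x_k:=y_k\to x$ and first-argument sequence $y_k+d\to x+d$ gives $\limsup_k \phi(y_k+d,y_k)\leq \phi(x+d,x)$, while continuity of $f$ gives $f(y_k)\to f(x)$. Therefore $f^\circ(x,d)=\lim_k \nabla f(y_k)^\top d\leq \phi(x+d,x)-f(x)$, which is the desired inequality.

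The main obstacle is that $(M_2)$ is one-sided, bounding $f$ above by $\phi$ but not below, so an attempt based on an arbitrary sequence $(y_k,t_k)$ realizing Clarke's $\limsup$-definition of $f^\circ$ would leave one without any means to lower bound $f(y_k)$ in terms of $\phi(\cdot,x)$. The device of shifting the base argument of $\phi$ to the differentiability point $y_k$ sidesteps this difficulty: at $y_k$ axiom $(M_1)$ converts the problematic lower bound into the equality $\phi(y_k,y_k)=f(y_k)$, and $(M_3)$ subsequently transfers the family of pointwise inequalities back to the limit point $x$.
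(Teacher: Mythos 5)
Your proof is correct and complete. The paper itself delegates the proof to \cite[Lemmas 1,2]{cutting}, so there is no in-text argument to compare against, but the route you take is the natural one and almost certainly matches the reference: reduce to the inequality $f^\circ(x,d)\leq\phi(x+d,x)-f(x)$, realize $f^\circ(x,d)$ as a limit $\nabla f(y_k)^\ttop d$ along differentiability points $y_k\to x$ via Rademacher and the Clarke gradient formula, apply $(M_2)$ with base point shifted to $y_k$ (where $(M_1)$ gives $\phi(y_k,y_k)=f(y_k)$ and convexity gives the monotone difference-quotient bound), and transfer back to $x$ with $(M_3)$.

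You correctly identify the key obstacle — $(M_2)$ only bounds $f$ from above, so a direct attempt with base point $x$ stalls at lower-bounding $f(y_k)$ — and the base-point shift is exactly what resolves it. The only point you glide over is the (standard) fact that, since the set $D$ of limits of gradients along differentiability sequences converging to $x$ is compact and $\partial f(x)=\operatorname{co}(D)$, the linear maximum over $\partial f(x)$ is already attained on $D$ itself; this is what produces a single sequence $y_k$ realizing $f^\circ(x,d)$. Worth making explicit, but not a gap.
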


\begin{remark}
\label{idea}
The following operation on models is sometimes useful. Suppose $f$ has two models, $\phi_1,\phi_2$,
then $\phi = \max\{\phi_1,\phi_2\}$ is again a model.  Equally useful is the following. Suppose
$\phi_1,\dots,\phi_m$ satisfy $(M_1)$ and $(M_3)$. Moreover, suppose $\phi_i$ satisfies $(M_2)$ at points $\x\in \Omega_i$
such that $\Omega_1 \cup \dots \cup \Omega_m = \mathbb R^n$. Then $\phi = \max\{\phi_1,\dots,\phi_m\}$ is a model of $f$. 
\end{remark}

\begin{remark}
There is another operation, which is more problematic. Suppose $f = f_1 + f_2$ where $f_i$ has model $\phi_i$. Then it would be convenient were
$\phi = \phi_1 + \phi_2$ a model of $f$. This fails in general due to axiom $(M_1)$. Indeed,
we have $\partial_1 \phi(\x,\x) = \partial_1 \phi_1(\x,\x) + \partial_1 \phi_2(\x,\x) \subset \partial f_1(\x) + \partial f_2(\x)$, but 
unfortunately, $\partial f_1(\x) + \partial f_2(\x) \not\subset \partial f(\x)$ in general. In this situation we can modify the model concept as follows:
We use 
$\partial^T \! f(\x) := \partial f_1(\x) + \partial f_2(\x)$ in axiom $(M_1)$ instead of the Clarke subdifferential $\partial(f_1+f_2)(\x)$.  This is tolerable since every Clarke 
critical point satisfies also $0\in \partial^T\! f(\x)$, i.e., is critical in this extended sense. Toland \cite{toland1,toland2} was the first to use
this type of subdifferential for dc-functions, i.e.,  $f = g-h$ with $g,h$ convex, which is why we use the notation $\partial^T$. 
\end{remark}

%\begin{remark}
%The use of $\partial^T f(\x) = \partial f_1(\x) + \partial f_2(\x)$
%is also very natural when we need a subgradient of $f=f_1+f_2$ at $\y$, because
%we would naturally use $\g = \g_1+\g_2$ with $\g_i\in \partial f_i(\y)$ at a trial point $\y$, so $\g \in \partial^T f(\y)$, but not necessarily
%in $\g\in \partial f(\y)$. 
%\end{remark}

\subsection{Strict models}
\label{sect_strict}

For convergence
theory we will need a slightly stronger type of model, which is given  by the following:

\begin{definition}
{\rm
A  model $\phi$ of $f$ is called {\em strict} at $\x\in \mathbb R^n$ if it satisfies the following strict version of axiom $(M_2)$:
\begin{itemize}
\item[$(\widehat{M}_2)$] If $\x_k,\y_k\to \x$, then there exist $\epsilon_k\to 0^+$ such that $f(\y_k) \leq \phi(\y_k,\x_k)+ \epsilon_k\|\y_k-\x_k\|$.
\end{itemize}
The model $\phi$ is called strict if it is strict at every $\x$.}
\hfill $\square$
\end{definition}

\begin{remark}
The difference between $(M_2)$ and the strict version $(\widehat{M}_2)$ is analogous to the difference between 
differentiability and strict differentiability, hence the nomenclature.
\end{remark}

One may ask which locally Lipschitz functions $f$ admit strict models. Since every $f$ has its standard model $\phi^\sharp$,  it is natural
to ask first
whether or when $\phi^\sharp$ is strict.

Recall that a locally Lipschitz function $f$ is upper $C^1$ at $\bar{x}$
if for every $\epsilon > 0$ there exists $\delta > 0$ such that for all $\x,\y\in B(\bar{\x},\delta)$ and {\em every} $\g \in \partial f(\x)$
one has $f(\y) \leq  f(\x) + \g^\ttop (\y-\x) + \epsilon \|\y-\x\|$, where  the latter could also be written as
\[
f(\y) \leq f(\x) -f^\circ(\x,\x-\y) + \epsilon \|\y-\x\|.
\]
We weaken this by saying that $f$ is weakly upper $C^1$ at $\bar{\x}$ if for every $\epsilon > 0$ there
exists $\delta > 0$ such that for all $\x,\y\in B(\bar{\x},\delta)$ and {\em some} $\g\in \partial f(\x)$ one has
$
f(\y) \leq f(\x) + \g^\ttop (\y-\x) + \epsilon \|\y-\x\|,
$
or what is the same
\[
f(\y) \leq f(\x) + f^\circ(\x,\y-\x) + \epsilon \|\x-\y\|.
\]
That is precisely strictness of $\phi^\sharp$, so we have (see \cite{loja}):

\begin{proposition}
\label{upper_strict}
If $f$ is weakly upper $C^1$ at $\x$, then
the standard model $\phi^\sharp$ of $f$ is strict at $\x$.  In that case, every model $\phi$ of $f$ is strict at $\x$.
\hfill $\square$
\end{proposition}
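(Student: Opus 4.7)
My plan is to unravel the definitions and sequentialize the weakly upper $C^1$ property, then use minimality of $\phi^\sharp$ (Proposition \ref{smallest}) for the second assertion.

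\textbf{Step 1: Strictness of $\phi^\sharp$.} Fix $\bar{\x}$ where $f$ is weakly upper $C^1$, and let $\x_k,\y_k\to \bar{\x}$. I want to produce $\epsilon_k\to 0^+$ with $f(\y_k)\leq \phi^\sharp(\y_k,\x_k)+\epsilon_k\|\y_k-\x_k\|$. For each $m\in\mathbb N$, the weakly upper $C^1$ property applied with $\epsilon=1/m$ yields $\delta_m>0$ (which I may take decreasing) such that for every $\x,\y\in B(\bar{\x},\delta_m)$ there exists some $\g\in \partial f(\x)$ with $f(\y)\leq f(\x)+\g^\ttop(\y-\x)+\tfrac{1}{m}\|\y-\x\|$. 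Choose $K_m$ so that $\x_k,\y_k\in B(\bar{\x},\delta_m)$ for all $k\geq K_m$, and set $\epsilon_k=1/m$ on the block $K_m\leq k<K_{m+1}$; clearly $\epsilon_k\to 0^+$.

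\textbf{Step 2: Pass from the subgradient to $f^\circ$.} For $k\geq K_m$, pick a corresponding $\g_k\in \partial f(\x_k)$. Since $f^\circ(\x_k,\cdot)$ is the support function of $\partial f(\x_k)$, we have $\g_k^\ttop(\y_k-\x_k)\leq f^\circ(\x_k,\y_k-\x_k)$, hence
\begin{equation*}
f(\y_k)\leq f(\x_k)+f^\circ(\x_k,\y_k-\x_k)+\epsilon_k\|\y_k-\x_k\| \;=\; \phi^\sharp(\y_k,\x_k)+\epsilon_k\|\y_k-\x_k\|,
\end{equation*}
which is exactly axiom $(\widehat{M}_2)$ at $\bar{\x}$.

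\textbf{Step 3: Extension to arbitrary models.} If $\phi$ is any model of $f$, Proposition \ref{smallest} gives $\phi^\sharp(\y_k,\x_k)\leq \phi(\y_k,\x_k)$ for every $k$, so the inequality from Step 2 yields $f(\y_k)\leq \phi(\y_k,\x_k)+\epsilon_k\|\y_k-\x_k\|$ with the same $\epsilon_k\to 0^+$. Hence strictness transfers from $\phi^\sharp$ to $\phi$ at $\bar{\x}$.

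The whole argument is essentially bookkeeping; there is no real obstacle beyond remembering that one must pick the subgradient supplied by the weakly upper $C^1$ condition and then dominate it by $f^\circ$. The only place one might stumble is constructing a single error sequence $\epsilon_k$ out of the $\epsilon$--$\delta$ formulation, which is handled by the diagonal block construction in Step~1.
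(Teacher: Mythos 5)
Your proof is correct and matches the paper's intended argument: the paper treats the first assertion as essentially definitional (the passage from some $\g\in\partial f(\x)$ to $f^\circ(\x,\y-\x)$ is spelled out just above the proposition, which is why the statement is closed with a $\square$ rather than a separate proof), and you obtain the second assertion exactly as the paper intends, by invoking the minimality of $\phi^\sharp$ from Proposition~\ref{smallest}. The only thing you leave implicit is the value of $\epsilon_k$ for the finitely many indices $k<K_1$; any local Lipschitz bound (e.g., $\epsilon_k=2L$ with $L$ a Lipschitz constant on a neighborhood of $\bar\x$) closes that harmlessly.
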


%\begin{proof}
%Since $-f$ is lower $C^1$ at $\x$, we invoke 
%\cite{dani}, which says that  for every $\epsilon > 0$ there exists $\delta > 0$ such that
%$(-f)(t\y' + (1-t)\x') \leq t(-f)(\y') + (1-t)(-f)(\x') + \epsilon t(1-t)\|\y'-\x'\|$ for all $\y',\x' \in B(\x,\delta)$ and $0 \leq t \leq 1$. This can be arranged as
%\[
%f(\y') \leq f(\x') + \frac{f(\x' + t(\y'-\x')) - f(\x')}{t} + \epsilon (1-t) \|\y'-\x'\|.
%\]
%Passing to the limit $t \to 0^+$ gives the estimate
%$f(\y') \leq \phi^\sharp(\y',\x') + \epsilon\|\y'-\x'\|$ for all $\x',\y'\in B(\x,\delta)$. That proves $(\widehat{M}_2)$ for $\phi^\sharp$. Since $\phi^\sharp \leq \phi$ for any other model
%$\phi$ of $f$, the second part of the statement is clear.
%\hfill $\square$ 
%\end{proof}

\begin{remark}
%\label{zig_zag}
A consequence is that if
$f$ is differentiable at $\x$,  then the standard model is strict at $\x$ if and only if $f$ is strictly differentiable at $\x$. 
\end{remark}

\begin{example}
\label{zig_zag}
Expanding on this, suppose $f$ is differentiable but not strictly so, can it still have a strict model $\phi$ other than $\phi^\sharp$? 
For instance, can $f(\x)=\x^2 \sin \x^{-1}$ have a model which is everywhere strict, including the origin?
The answer is surprisingly 'yes'. 

We consider for technical reasons a discrete version of $\x^2(\sin\x^{-1}+1)$.
Define $f:[-1,1] \to \mathbb R^+$ as follows: start at $t_1=1$ with value $f(t_1)=0$, then choose slope
$-1$ on $[t_2,t_1]$, where $t_2 = \frac{\sqrt{5}-1}{2}$ with value $f(t_2)=t_2^2$.  Next let $f$ have slope $+1$ on $[t_3,t_2]$ with value $f(t_3)=0$, etc.
This leads to the recursions
\[
t_{2k+1} = t_{2k}-t_{2k}^2, \quad t_{2k} = \frac{\sqrt{1+4t_{2k-1}}-1}{2}, \quad f(t_{2k})=t_{2k}^2, \quad  f(t_{2k+1})=0,
\]
with $f$ piecewise linear with slope $\pm 1$ in between the $t_i$. In fact, $f$ zig-zags between the axis $y=0$ and the parabola $y=x^2$. We extend the function symmetrically to $[-1,0]$. 
Note that $f$ is differentiable at the origin with $f'(0)=0$, but is not strictly differentiable, because $\partial f(0)=[-1,1]$.

We show that the standard model $\phi^\sharp$ of $f$ is not strict. Suppose it were, then
$f(t_{2k}) {\leq} \phi^\sharp(t_{2k},t_{2k+1}-\delta_k) + {\rm o}(t_{2k}-t_{2k+1}+\delta_k)$, where
$\delta_k > 0$ with $t_{2k+1} - \delta_k > t_{2k+2}$. Since on $[t_{2k+2},t_{2k+1}]$ the slope is $-1$, this amounts to 
$t_{2k}^2 {\leq} f(t_{2k+1}-\delta_k) + (-1) (t_{2k}-t_{2k+1}+\delta_k) + {\rm o}(t_{2k}-t_{2k+1}+\delta_k)= \delta_k + t_{2k+1}-t_{2k} - \delta_k + {\rm o}(t_{2k}-t_{2k+1}+\delta_k)=-t_{2k}^2+ {\rm o}(t_{2k}-t_{2k+1}+\delta_k)$. So we would have $2t_{2k}^2 \leq {\rm o}(t_{2k}-t_{2k+1}+\delta_k)$, and since $\delta_k$ can be chosen arbitrarily small, we would need
$t_{2k}^2  \leq {\rm o}(t_{2k+1}-t_{2k})={\rm o}(t_{2k}^2)$, which is wrong. Hence $\phi^\sharp$ is not strict at $0$.  
\end{example}

\hspace*{-1cm}
\begin{pspicture}(-2,-1)(6,4)

\psscalebox{1.1}{
\psline[arrows=->,arrowsize=5pt 1](-1,0)(11,0)
\psline[arrows=->,arrowsize=5pt 1](0,-.5)(0,3)

\psplot[linecolor=lightgray,plotstyle=dots,
      plotpoints=250,dotstyle=*,dotsize=1pt]
      {0}{8}{x 2 exp 20 div} %(x^2)/5

\psdot(10,0)

\psline(10,0)(7.31,2.69)      
\psdot(7.31,2.69)

\psline(7.31,2.69)(4.62,0)

\psdot(4.62,0)
\psline(4.62,0)(3.86,0.76)

\psdot[linecolor=blue](3.86,0.76)
 
\psline(3.86,0.76)(3.1,0) 

\psdot(3.1,0)

\psline(3.1,0)(2.72,0.38)

\psdot(2.72,0.38)

\psline(2.72,0.38)(2.34,0)

\psdot(2.34,0)

\psdot[linecolor=blue](6.1,1.5)

\psplot[linecolor=blue,plotstyle=dots,
      plotpoints=250,dotstyle=*,dotsize=1pt]
      {3.86}{6.1}{x 2 exp 0.2989 mul 6.5221 add x 2.6466 mul sub} %(x^2)/5
      
      \psline[linecolor=blue](6.1,1.5)(7.81,3.19)
      \psline[linecolor=blue](3.86,0.76)(3,1.62)
      
      \psdot[linecolor=blue](6.1,0)
      \psline[linestyle=dotted,linecolor=blue](6.1,0)(6.1,1.5)
      
      \uput[0](5.8,-.3){$\color{blue}s$}
      \uput[0](4.2,-.3){$t_{2k+1}$}

 \psline[linestyle=dotted,linecolor=black](7.31,0)(7.31,2.69)
 \uput[0](7.0,-.3){$t_{2k}$}
 
 \uput[0](9.4,-.3){\small $ t_{2k-1}$}
 
 }
 
\end{pspicture}

We now show that $f$ admits a strict model $\phi$. For peaks $(t_{2k},t_{2k}^2)$ let
$\phi(\cdot,t_{2k})= t_{2k}^2 + |\cdot - t_{2k}|$.  Then $\partial_1 \phi(t_{2k},t_{2k}) = [-1,1] = \partial f(t_{2k})$. The same for every bottom points
$t_{2k+1}$, including the origin. Hence again $\partial_1\phi(t_{2k+1},t_{2k+1})=[-1,1]=\partial f(t_{2k+1})$. For intermediate points we define $\phi$
as follows. Let $t_{2k+1} < s < t_{2k}$ (as in the figure). Then
$\phi(\cdot,s)$ consists of three arcs (shown in blue). Fit a parabola through the points $(t_{2k+2},t_{2k+2}^2)$ and $(s,f(s))$ such that its slope at $s$ equals $f'(s)=1$. To the left
of $t_{2k+2}$ extend by a line with slope $-1$, to the right of $s$ by a line of slope 1. This is a continuous convex function, which is differentiable everywhere, except at 
$t_{2k+2}$. Moreover, $\phi(\cdot,s) \geq f$. For $s \in (t_{2k+2},t_{2k+1})$ we proceed symmetrically, where
now the non-differentiability occurs at $t_{2k}$. Note that $\phi(\cdot,s)$ depends continuously on $s$ for $s\in (t_{2k+2},t_{2k})$,
where at $s=t_{2k+1}$ the three arc function degrades to the vee-shaped function $\phi(\cdot,t_{2k+1})=|\cdot-t_{2k+1}|$. Only at the peaks $s=t_{2k}$ is the construction discontinuous, but it remains
upper semi-continuous, so that $(M_3)$ is satisfied at the $t_{2k}$. Finally, we also observe that we have upper semi-continuity at $s=0$. This assures $(M_3)$ also at $0$. 
Strictness $(\widehat{M}_2)$ follows because $f \leq \phi(\cdot,s)$.

The function $f$ is not weakly upper semi-smooth in the sense of \cite{mifflin2,mifflin3} at $0$, nor is it upper $C^1$, so it is not amenable to linesearch methods. 
Yet the fact that it admits a strict model shows that it is amenable to non-smooth optimization
techniques in our framework. 
\hfill $\square$

\begin{remark}
Borwein and Moors \cite{borwein_densely_strict,borwein_moors} construct locally Lipschitz functions where subdifferential integrability fails. Consider in particular a function
$f:\mathbb R \to \mathbb R$ where $\partial f(\x) = [-1,1]$ for every $\x$. Then the standard model of $f$ is strict, but $f$ is not upper $C^1$. See also
\cite{borwein_zhu}.
\end{remark}

\begin{remark}
The natural model $\phi(\cdot,\x) = \psi(F(\x) + F'(\x) (\cdot - \x))$ of a composite function $f = \psi \circ F$ with $\psi$ convex and $F$ of class $C^1$ is strict, because $F(\y) = F(\x) + F'(\x)(\y-\x) + {\rm o}(\|\y-\x\|)$ as $\y-\x \to 0$, hence
$f(\y) - \phi(\y,\x) = \psi(F(\y)) - \psi(F(\x)+F'(\x)(\y-\x)) = {\rm o}(\|\y-\x\|)$, as $\psi$ is locally Lipschitz. In particular, every convex $f$ is its own strict model $\phi(\cdot,\x)=f$.
Another consequence is that every lower $C^2$-function has a strict model. \end{remark}

However, the following is more general:

\begin{proposition}
\label{lowerC1}
Every  lower $C^1$-function admits a strict model. %In particular, this is true for semi-smooth functions.
\end{proposition}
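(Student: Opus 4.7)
The plan is to build a strict model from the local max-representation that characterises lower $C^1$ regularity. Recall that $f$ being lower $C^1$ at $\bar x$ means (cf.\ Rockafellar--Wets) that on some neighbourhood $V$ of $\bar x$ one has $f(x)=\max_{s\in S}F(x,s)$, where $S$ is a compact parameter space and $F:V\times S\to \mathbb R$ is jointly continuous with jointly continuous partial gradient $\nabla_x F(x,s)$. Locally I would use the natural model suggested by this representation, namely
\[
\phi(y,x) \;=\; \max_{s\in S}\bigl[\,F(x,s)+\nabla_x F(x,s)^{\ttop}(y-x)\,\bigr],
\]
which is just the natural model of Remark 2.4 for the composite $f=\psi\circ F$ with convex $\psi=\max_{s\in S}$.

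Axioms $(M_1)$ and $(M_3)$ would be verified routinely. The function $\phi(\cdot,x)$ is a supremum of affine maps, hence convex; $\phi(x,x)=\max_s F(x,s)=f(x)$; the chain rule for max-functions gives $\partial_1\phi(x,x)=\mathrm{conv}\{\nabla_x F(x,s):s\in S(x)\}\subset\partial f(x)$; and joint continuity of $F$ and $\nabla_x F$ on the compact product $\overline V\times S$ makes $\phi$ itself jointly continuous in $(y,x)$.

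The heart of the proof is the strict axiom $(\widehat M_2)$. Given $x_k,y_k\to \bar x$, I would pick $s_k\in S$ with $F(y_k,s_k)=f(y_k)$. Since $\phi(y_k,x_k)\geq F(x_k,s_k)+\nabla_x F(x_k,s_k)^{\ttop}(y_k-x_k)$ by definition, the mean value theorem applied to $F(\cdot,s_k)$ along $[x_k,y_k]$ yields a point $\xi_k$ on that segment with
\[
f(y_k)-\phi(y_k,x_k) \;\leq\; \bigl[\nabla_x F(\xi_k,s_k)-\nabla_x F(x_k,s_k)\bigr]^{\ttop}(y_k-x_k).
\]
Uniform continuity of $\nabla_x F$ on the compact set $\overline V\times S$ then delivers a modulus $\epsilon_k:=\sup_{s\in S}\|\nabla_x F(\xi_k,s)-\nabla_x F(x_k,s)\|\to 0$, \emph{independent of the choice of} $s_k$, which is precisely $(\widehat M_2)$ at $\bar x$.

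To turn this pointwise local construction into a strict model defined on all of $\mathbb R^n$, I would invoke the gluing technique of Remark \ref{idea}. Cover $\mathbb R^n$ by countably many neighbourhoods $V_i$ each carrying a local max-representation $(S_i,F_i)$, extend the associated natural models $\phi_i$ to globally defined, convex-in-$y$ functions satisfying $(M_1)$ and $(M_3)$ everywhere (for instance by majorising $\phi_i$ outside $V_i$ with the standard model $\phi^\sharp$), and form their pointwise maximum. Strictness of each $\phi_i$ on its $V_i$ transfers to the max, and $\bigcup V_i=\mathbb R^n$ together with Remark \ref{idea} gives a global strict model. I expect this globalisation step, rather than the local calculation, to be the most delicate part, since the local representations $(S_i,F_i)$ live only on the $V_i$ and the extensions must be engineered so as not to destroy either the model axioms or the local strictness.
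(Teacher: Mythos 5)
Your route is genuinely different from the paper's. Within this paper, Proposition~\ref{lowerC1} is obtained as a corollary of the downshift oracle analysis: Proposition~\ref{mifflin} shows that for a lower $C^1$ (hence Clarke regular) function the downshifted-tangent oracle $\mathscr O^\downarrow$ satisfies the strict axiom $(\widehat{\mathscr O}_3)$, and the envelope function (\ref{envelope}) of a strict oracle is automatically a strict model. That construction is global by design, because downshifting tangents makes sense at every point; no gluing is required. You instead go through Spingarn's local max-representation $f(x)=\max_{s\in S}F(x,s)$ with $F,\nabla_xF$ jointly continuous, build the natural model of the max-composite, and prove $(\widehat M_2)$ by the mean value theorem plus uniform continuity of $\nabla_xF$ on the compact $\overline V\times S$. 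That local calculation is correct, and it is closer in spirit to the "direct proof" the paper defers to \cite{cutting}.

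The genuine gap is exactly where you flag it: the globalisation. Remark~\ref{idea} only licenses a \emph{finite} max of functions $\phi_1,\dots,\phi_m$ each satisfying $(M_1)$ and $(M_3)$, with $(M_2)$ holding on pieces whose union is $\mathbb R^n$; since $\mathbb R^n$ cannot be covered by finitely many of your $V_i$, you would need to either prove a countable version of that remark (controlling finiteness of the sup and upper semicontinuity of the envelope) or restrict to a bounded level set, which the proposition does not assume. Moreover the extension scheme "majorise $\phi_i$ outside $V_i$ by $\phi^\sharp$" is not innocuous: replacing $\phi_i(\cdot,x)$ by $\phi^\sharp(\cdot,x)$ for $x\notin V_i$ creates a discontinuity in the second argument at $\partial V_i$, which threatens $(M_3)$, and it is not immediate that $\max\{\phi_i,\phi^\sharp\}$ remains strict on $V_i$ once $\phi^\sharp$ is allowed to dominate near its boundary. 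So the local argument is sound, but the passage to a globally defined strict model is left as a genuine open step, whereas the paper's downshift route never encounters it.
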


A direct proof can be found in \cite{cutting}. Here we will obtain a slightly stronger result from our more general theory
of cutting plane oracles in section \ref{strictness_downshift}.

\begin{remark}
We call $f$ a dc-function  if it is the difference of two convex functions, i.e., 
$f = g - h$ for convex $g,h$. Let us call $f$ a DC-function if it admits a dc-decomposition $f = g - h$ where $\partial f(\x) = \partial g(\x) - \partial h(\x)$. Every DC-function has
the strict model $\phi(\cdot,\x) = g(\cdot) + \phi^\sharp(\cdot,\x)$, where $\phi^\sharp$ is the standard model of the concave function
$-h$. Note that if $f$ is dc but not DC, we can still use $\partial^T f(\x) = \partial g(\x) - \partial h(\x)$, then $\phi$ is automatically a strict $\partial^T$-model of $f$.
For more information on dc-functions see \cite{hiriart}.
\end{remark}

\begin{remark}
Consider the Euclidian distance  $f(\x) = \frac{1}{2}d_S^2(\x)$ to an arbitrary set $S\subset \mathbb R^n$. It follows with
\cite{asplund} that 
$h(\x)=\frac{1}{2}|\x|^2- \frac{1}{2}d_S^2(\x)$ is convex, hence $-f$ is lower $C^2$, and therefore $f$ is upper $C^2$. This means both $d_S^2$ and $-d_S^2$ have strict models.
For $f$ we may use the standard oracle, which leads to the steepest descent method. For $-f$ we use downshifted tangents.
Note that $f$ is also a DC-function, because $\frac{1}{2}d_S^2(\x) = \frac{1}{2}|\x|^2 - \left(  \frac{1}{2}|\x|^2- \frac{1}{2}d_S^2(\x) \right)$,
so  the previous remark gives  yet another strict model of $f$.
\end{remark}

%For additional information on the model concept see
%\cite{cutting,ANP:2009,ANR:2016}.

\section{Cutting plane oracles}
\label{sect_oracle}
By a cutting plane oracle we understand, loosely, any procedure
$\mathscr O$, which associates with every serious iterate $\x$ and unsuccessful trial step (null step) $\z$ near $\x$ one -- or several -- affine
functions $m(\cdot,\x) = a + \g^\ttop (\cdot - \x)$,
which replace the tangent to $f$ at $\y$.
These cuts
are then accumulated to build a  convex working model $\phi_k(\cdot,\x)$ of $f$ in the neighborhood of $\x$.
In the convex cutting plane or bundle method, $\mathscr O(\z,\x)$ consists of any of the
tangents to $f$ at $\z$, i.e. $t_{\z,g}(\cdot) = f(\z) + \g^\ttop (\cdot - \z)$, where $\g \in \partial f(\z)$, so is in fact independent of the serious iterate $\x$. 
In the non-convex case we may no longer proceed in this way, because tangents to $f$ at $\z$ may pass above $f(\x)$, and cannot be used directly as cutting planes.

Let $\mathscr O:\mathbb R^n \times \mathbb R^n \rightrightarrows \mathbb R \times \mathbb R^n$ be a set-valued operator mapping into the nonempty bounded subsets of $\mathbb R \times \mathbb R^n$, 
where $(a,\g) \in \mathscr O(\z,\x)$ is understood as to represent the affine function $a + \g^\ttop (\cdot - \x)$. We call $\mathscr O$ 
a {\em cutting plane oracle} for $f$ if it
is bounded on bounded sets and satisfies
the following axioms:

\begin{itemize}
\item[$(\mathscr O_1)$] If $(a,\g)\in \mathscr O(\z,\x)$, then $a \leq f(\x)$. Moreover, 
$\mathscr O(\x,\x)$ contains at least one element $(f(\x),\g)$ with $\g \in  \partial f(\x)$.

\item[$(\mathscr O_2)$]
If $(a_j,\g_j)\in \mathscr O(\z_j,\x)$ with $a_j \to f(\x)$, $\|\z_j -\x\|\leq M$,  and $\g_j \to \g$, then $\g\in \partial f(\x)$.

\item[$(\mathscr O_3)$] If $\z_j \to \x$ there exist $(a_j,\g_j)\in \mathscr O(\z_j,\x)$ and
$\epsilon_j\to 0^+$ such that $f(\z_j) \leq a_j + \g_j^\ttop (\z_j-\x) + \epsilon_j \|\z_j-\x\|$.

\item[$(\mathscr O_4)$] If $\z_j \to \z$, $\y_j\to \y$ and $\x_j\to \x$,
and if $(a_j,\g_j)\in \mathscr O(\z_j,\x_j)$, then there exists $\z'\in B(\x,M)$ and $(a',\g')\in \mathscr O(\z',\x)$ such that
$\limsup_{j\to \infty} a_j + \g_j^\ttop (\y_j-\x_j) \leq a' + \g'^\ttop (\y-\x)$.
\end{itemize}
For the $M > 0$ occurring in $(\mathscr O_2),(\mathscr O_4)$ we define the envelope function of $\mathscr O$
by
\begin{equation}
\label{envelope}
\phi(\cdot,\x) = \sup\{a + \g^\ttop (\cdot-\x): (a,\g) \in \mathscr O(\z,\x), \|\z-\x\| \leq M\}.
\end{equation}

\begin{proposition}
Suppose $\mathscr O$ satisfies axioms {\rm $(\mathscr O_1)-(\mathscr O_4)$}.
Then the envelope function $\phi$ of $\mathscr O$ is a model of $f$.
\end{proposition}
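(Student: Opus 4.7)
The plan is to verify the three model axioms $(M_1),(M_2),(M_3)$ separately, using one oracle axiom for each (plus $(\mathscr{O}_2)$ for the subgradient inclusion), and relying on the fact that $\phi(\cdot,\x)$ is already expressed as a supremum of affine functions in its first argument.

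First I would handle $(M_1)$. Convexity of $\phi(\cdot,\x)$ is immediate, as it is a pointwise supremum of affine functions. For the identity $\phi(\x,\x)=f(\x)$, I would observe that every $(a,\g)\in \mathscr{O}(\z,\x)$ contributes the value $a$ at the point $\x$, and $a\le f(\x)$ by the first half of $(\mathscr{O}_1)$; the reverse inequality comes from the second half of $(\mathscr{O}_1)$, which asserts the existence of $(f(\x),\g)\in\mathscr{O}(\x,\x)$ (with $\|\x-\x\|=0\le M$). For the subdifferential inclusion $\partial_1\phi(\x,\x)\subset \partial f(\x)$, I would use the standard characterization of the subdifferential of a supremum of affine functions: any $\g^*\in \partial_1\phi(\x,\x)$ lies in the closed convex hull of limits of gradients $\g_j$ coming from \emph{active} pairs, i.e.\ pairs $(a_j,\g_j)\in\mathscr{O}(\z_j,\x)$ with $\|\z_j-\x\|\le M$ and $a_j\to f(\x)$. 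Since $\mathscr{O}$ is bounded on bounded sets, such sequences have accumulation points, and $(\mathscr{O}_2)$ precisely identifies each such limit gradient as an element of $\partial f(\x)$. Then convexity and closedness of $\partial f(\x)$ finish the inclusion.

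For $(M_2)$, I would simply invoke $(\mathscr{O}_3)$: given $\y_k\to\x$, it yields $(a_k,\g_k)\in\mathscr{O}(\y_k,\x)$ and $\epsilon_k\to 0^+$ with $f(\y_k)\le a_k+\g_k^\ttop(\y_k-\x)+\epsilon_k\|\y_k-\x\|$. For $k$ large enough one has $\|\y_k-\x\|\le M$, hence the pair $(a_k,\g_k)$ participates in the supremum defining $\phi(\y_k,\x)$, and the inequality $f(\y_k)\le \phi(\y_k,\x)+\epsilon_k\|\y_k-\x\|$ follows.

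Axiom $(M_3)$ is where the work is, and this is where I expect the only real subtlety. Given $\x_k\to\x$, $\y_k\to\y$, I would choose, for each $k$, a near-optimal triple: a point $\z_k$ with $\|\z_k-\x_k\|\le M$ and $(a_k,\g_k)\in\mathscr{O}(\z_k,\x_k)$ such that
\[
\phi(\y_k,\x_k)\le a_k+\g_k^\ttop(\y_k-\x_k)+\tfrac{1}{k}.
\]
The sequence $\{\z_k\}$ is bounded (since $\x_k$ converges), and $(a_k,\g_k)$ is bounded because $\mathscr{O}$ is bounded on bounded sets; extracting subsequences I may assume $\z_k\to\z^\infty$, which satisfies $\|\z^\infty-\x\|\le M$. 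Now I apply $(\mathscr{O}_4)$ to the triples $(\z_k,\y_k,\x_k)$: it delivers some $\z'\in B(\x,M)$ and $(a',\g')\in\mathscr{O}(\z',\x)$ such that
\[
\limsup_{k\to\infty}\bigl(a_k+\g_k^\ttop(\y_k-\x_k)\bigr)\le a'+\g'^\ttop(\y-\x)\le \phi(\y,\x).
\]
Combining with the $1/k$ approximation gives $\limsup_k \phi(\y_k,\x_k)\le \phi(\y,\x)$, as required. The main obstacle is precisely this step: one has to reduce from the supremum definition to a single sequence of witnesses in order for $(\mathscr{O}_4)$ to apply, and one has to keep track of the constant $M$ so that the limit triple still belongs to the set over which the envelope takes its sup.
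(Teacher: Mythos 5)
Your proposal is correct, and it follows the paper's decomposition into $(M_1)$, $(M_2)$, $(M_3)$, with $(M_2)$ from $(\mathscr O_3)$ and $(M_3)$ from $(\mathscr O_4)$ essentially identical to the paper's treatment (the paper writes the witnesses as exactly optimal rather than $1/k$-optimal, but that is a cosmetic choice). The one genuine divergence is in the subdifferential inclusion of $(M_1)$. You invoke the max-formula (Danskin-type) characterization of $\partial_1\phi(\x,\x)$ as the closed convex hull of limits of gradients $\g_j$ from $\epsilon$-active pairs $(a_j,\g_j)$ with $a_j\to f(\x)$, then push each such limit into $\partial f(\x)$ via $(\mathscr O_2)$ and close up by convexity. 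This is legitimate, and the hypothesis that $\mathscr O$ is bounded on bounded sets is exactly what makes the coefficient set $\{(a,\g):(a,\g)\in\mathscr O(\z,\x),\ \|\z-\x\|\le M\}$ relatively compact, so that the max-formula applies and the hull is already closed. The paper instead avoids any appeal to the max-formula: it fixes a direction $\h$, considers the one-variable convex function $t\mapsto\phi(\x+t\h,\x)$, picks for each $t>0$ an affine support $m_t$ with coefficients $(a_t,\g_t)$ at $\x+t\h$, uses monotonicity of convex slopes to get $\g^\ttop\h\le\g_t^\ttop\h$, and passes to a cluster gradient $\hat\g\in\partial f(\x)$ via $(\mathscr O_2)$, concluding $\g^\ttop\h\le f^\circ(\x,\h)$ for all $\h$ and hence $\g\in\partial f(\x)$ by the support-function (Hahn--Banach) characterization. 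Your route is shorter if one is willing to cite the max-formula for compact suprema of affine functions; the paper's one-dimensional argument is more self-contained and sidesteps the need to state that formula precisely. One small omission in your write-up, which the paper handles explicitly, is the preliminary observation that $\phi(\cdot,\x)$ is everywhere finite: $\phi(\y,\x)>-\infty$ because $\mathscr O(\x,\x)\ne\emptyset$, and $\phi(\y,\x)<+\infty$ again by boundedness of $\mathscr O$ on the bounded set $B(\x,M)\times\{\x\}$; this is needed before one can even speak of $\partial_1\phi(\x,\x)$ and apply the max-formula.
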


\begin{proof}
1) Observe that $\mathscr O(\x,\x) \not=\emptyset$ by $(\mathscr O_1)$.  Pick $(a,\g)\in \mathscr O(\x,\x)$, then $\phi(\y,\x) \geq a +\g^\ttop (\y-\x)> -\infty$. This means
$\phi(\cdot,\x)$ is by construction convex and maps  into  $\mathbb R \cup \{\infty\}$.  But $\phi(\y,\x) = \infty$ is impossible, because  that would require a sequence $\z_j \in B(\x,M)$
and $(a_j,\g_j)\in \mathscr O(\z_j,\x)$ such that $a_j + \g_j^\ttop (\y-\x) \to \infty$. Then $(a_j,\g_j)$ would have to be unbounded, contradicting the fact that $\mathscr O$ is 
by definition bounded on the bounded set
$B(\x,M)\times \{\x\}$. This means $\phi(\cdot,\x)$ is a convex function which is everywhere defined.

2) We have to check $(M_1)$. Now by $(\mathscr O_1)$ we have $a \leq f(\x)$ for every $(a,\g) \in \mathscr O(\z,\x)$, hence
$\phi(\x,\x) \leq f(\x)$. But there exists $(f(\x),\g) \in \mathscr O(\x,\x)$, hence $\phi(\x,\x) \geq f(\x)$, giving equality. It remains to check
that every subgradient $\g$ of $\phi(\cdot,\x)$ at $\x$  belongs to $\partial f(\x)$.

Let $\g$ be a subgradient of $\phi(\cdot,\x)$ at $\x$, then $t(\cdot) := f(\x) + \g^\ttop (\cdot - \x)$ is a tangent to $\phi(\cdot,\x)$
at $\x$, and by convexity $t(\cdot) \leq \phi(\cdot,\x)$. Now fix $\h$ and consider the convex function $t \mapsto \phi(\x+t\h,\x)$ on the real line.
Its slope at $t=0$ is greater or equal than $\g^\ttop \h$. Now for every $t > 0$ find $\z_t \in B(\x,M)$ and a cutting plane $(a_t,\g_t)\in \mathscr O(\z_t,\x)$,
represented as
$m_t(\cdot,\x) = a_t + \g_t^\ttop (\cdot-\x)$, such that $\phi(\x+t\h,\x) \geq m_t(\x+t\h,\x)$ with equality $\phi(\x+t\h,\x) = m_t(\x+t\h,\x)$. Observe that this implies $a_t \to f(\x)$.

Now the slope of $\phi(\cdot,\x)$
at $\x+t\h$ is steeper than its slope at $\x$ (monotonicity), so $\g^\ttop \h \leq \g_t^\ttop \h$. Choosing a subsequence
$\g_t \to \hat{\g}$, $\z_t \to \z$, we get $\hat{\g} \in \partial f(\x)$ by axiom $(\mathscr O_2)$. That shows
$\g^\top \h \leq \hat{\g}^\ttop \h \leq \max\{\tilde{\g}^\ttop \h: \tilde{\g}\in \partial f(\x)\}= f^\circ(\x,\h)$. But since $\h$ was arbitrary,
we deduce $\g\in \partial f(\x)$ by Hahn-Banach. This completes the proof of $(M_1)$.

3) Since $(M_2)$ is clear from $(\mathscr O_3)$, it
remains to check $(M_3)$. Fix $\y_k\to \y$, $\x_k\to \x$, and let $\z_k\in B(\x_k,M)$ and $\epsilon_k \to 0^+$ such that
$m_{\z_k,\g_k}(\y_k,\x_k) = a_k + \g_k^\ttop (\y_k-\x_k) = \phi(\y_k,\x_k)$, where $(a_k,\g_k) \in \mathscr O(\z_k,\x_k)$.
Invoking $(\mathscr O_4)$, we get $\z'\in B(\x,M)$ and $(a',\g')\in \mathscr O(\z',\x)$ such that
$\limsup_{k\to\infty} a_k+\g_k^\ttop (\y_k-\x_k) \leq a' + \g^{'\ttop}(\y-\x) \leq \phi(\y,\x)$, and with the above this implies
$\limsup_{k\to\infty} \phi(\y_k,\x_k) \leq \phi(\y,\x)$.
%\hfill $\square$
\end{proof}

\begin{definition}
{\rm
A cutting plane oracle $\mathscr O$ is called {\em strict} at $\x$  if the following strict version of $(\mathscr O_3)$ is satisfied:
\begin{itemize}
\item[$(\widehat{\mathscr O}_3)$] Given $\x_j,\z_j \to \x$ there exist $(a_j,\g_j)\in \mathscr O(\z_j,\x_j)$ and $\epsilon_j\to 0^+$
such that $f(\z_j) \leq a_j + \g_j^\ttop(\z_j-\x_j) + \epsilon_j \|\z_j-\x_j\|$. 
\end{itemize}
\hfill $\square$
}
\end{definition}

%\begin{proposition}
%If $\mathscr O$ is strict, then its upper envelope model  $\phi$ is also strict. Hence a function $f$ has a strict model iff it has a strict cutting plane oracle.
%\hfill $\square$
%\end{proposition}

%\begin{proof}
%It remains to prove necessity. If $\phi$ is a model, then we obtain a cutting plane oracle $\mathscr O\phi$
%by taking as $\mathscr O_\phi(\z,\x)$ any tangent to $\phi(\cdot,\x)$ at $\z$. This oracle is obviously strict as soon as $\phi$ is strict.
%\hfill $\square$
%\end{proof}

%Oracles of the form $\mathscr O_\phi$ are called {\em model-based oracles}. Every model generates such a model-based oracle,
%and the upper envelope of $\mathscr O_\phi$ is $\phi$. 

\subsection{Model-based cutting plane oracles}
\label{model_oracle}
A natural way to generate cutting planes is when a model $\phi$ of $f$
is available for computation. Let $\x$ be the serious iterate, $\z$ an unsuccessful trial step (a null step) at which we want 
to generate a cutting plane. We simply take an affine support function $m_{\z,\g}(\cdot,\x)$ of   $\phi(\cdot,\x)$ at $\z$,
i.e. $m_{\z,\g} = \phi(\z,\x) + \g^\ttop (\cdot - \z)$ with $\g \in \partial_1 \phi(\z,\x)$. We use the notation
$\mathscr O_\phi$ for the oracle generated by a model $\phi$. If $\phi^\sharp$ is the standard model
of $f$, then we write $\mathscr O^\sharp := \mathscr O_{\phi^\sharp}$, calling it the {\em standard oracle}.

%A simple feature of these {\em model-based oracles} is the following

\begin{proposition}
Every locally Lipschitz function $f$ admits a cutting plane oracle. It admits a strict cutting plane oracle if and only if
it admits a strict model.
\end{proposition}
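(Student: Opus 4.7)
The plan is to proceed in three steps, exploiting the correspondence between models and model-based oracles described in Section~\ref{model_oracle}.

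\textbf{Step 1 (existence).} Every locally Lipschitz $f$ carries the standard model $\phi^\sharp$, hence it suffices to show that for any model $\phi$ of $f$ the model-based procedure $\mathscr O_\phi$ defined in Section~\ref{model_oracle} is a genuine cutting plane oracle. For a cut $m_{\z,\g}(\cdot,\x) = \phi(\z,\x) + \g^\ttop(\cdot-\z)$ with $\g \in \partial_1\phi(\z,\x)$ one reads off $a = \phi(\z,\x) - \g^\ttop(\z-\x)$. Axiom $(\mathscr O_1)$ then follows from convexity of $\phi(\cdot,\x)$ and $(M_1)$: the affine tangent lies below $\phi(\cdot,\x)$, so evaluation at $\x$ gives $a \leq \phi(\x,\x)=f(\x)$, and at $\z=\x$ we can pick $\g \in \partial_1\phi(\x,\x) \subset \partial f(\x)$ to obtain the required element $(f(\x),\g)$. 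For $(\mathscr O_2)$, passing to the limit in $\phi(\y,\x) \geq a_j + \g_j^\ttop(\y-\x)$ yields $\phi(\y,\x) \geq f(\x) + \g^\ttop(\y-\x)$, so $\g \in \partial_1\phi(\x,\x) \subset \partial f(\x)$ by $(M_1)$. Axiom $(\mathscr O_3)$ is a restatement of $(M_2)$, since $a_j + \g_j^\ttop(\z_j-\x) = \phi(\z_j,\x)$. Finally, $(\mathscr O_4)$ follows from $a_j + \g_j^\ttop(\y_j-\x_j) \leq \phi(\y_j,\x_j)$, combined with $(M_3)$ and the existence of a support $(a',\g')$ of $\phi(\cdot,\x)$ at $\z'=\y$ (or any nearby point in $B(\x,M)$) whose affine value at $\y$ recovers $\phi(\y,\x)$.

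\textbf{Step 2 (strict model $\Rightarrow$ strict oracle).} If $\phi$ is strict, I claim $\mathscr O_\phi$ satisfies $(\widehat{\mathscr O}_3)$. Given $\x_j,\z_j \to \x$, pick $\g_j \in \partial_1\phi(\z_j,\x_j)$, so the cut $m_{\z_j,\g_j}(\cdot,\x_j)$ evaluates to $\phi(\z_j,\x_j)$ at $\z_j$. Strictness $(\widehat{M}_2)$ gives $\epsilon_j \to 0^+$ with $f(\z_j) \leq \phi(\z_j,\x_j) + \epsilon_j\|\z_j-\x_j\| = a_j + \g_j^\ttop(\z_j-\x_j) + \epsilon_j\|\z_j-\x_j\|$, which is exactly $(\widehat{\mathscr O}_3)$.

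\textbf{Step 3 (strict oracle $\Rightarrow$ strict model).} Let $\mathscr O$ be a strict oracle. The preceding proposition shows that the envelope function $\phi$ of $\mathscr O$ defined in \eqref{envelope} is already a model of $f$, so only $(\widehat{M}_2)$ is left. Given $\x_j, \y_j \to \x$, set $\z_j = \y_j$ and apply $(\widehat{\mathscr O}_3)$ to obtain $(a_j,\g_j) \in \mathscr O(\z_j,\x_j)$ and $\epsilon_j \to 0^+$ with $f(\y_j) \leq a_j + \g_j^\ttop(\y_j-\x_j) + \epsilon_j\|\y_j-\x_j\|$. Since eventually $\|\z_j-\x_j\| \leq M$, the cut $a_j + \g_j^\ttop(\cdot-\x_j)$ is one of the affine functions appearing in the sup defining $\phi(\cdot,\x_j)$, hence $a_j + \g_j^\ttop(\y_j-\x_j) \leq \phi(\y_j,\x_j)$, and $(\widehat{M}_2)$ follows.

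\textbf{Anticipated difficulty.} The only delicate point is $(\mathscr O_4)$ in Step~1: one must produce a single pair $(a',\g')$ whose affine value at $\y$ dominates the limsup of cut values, and this requires that the support point $\z'$ can indeed be chosen in $B(\x,M)$. The clean way to handle this is to rely on $(M_3)$ to bound the limsup by $\phi(\y,\x)$ and then pick the support at $\z'=\y$ (or adjust $M$ accordingly), which is precisely why the envelope in \eqref{envelope} is defined with the same constant $M$ that appears in the oracle axioms.
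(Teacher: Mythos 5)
Your proof is correct and follows the same route as the paper: existence via $\phi^\sharp$ together with the model-based oracle $\mathscr O_\phi$, strictness forward via $(\widehat{M}_2)\Rightarrow(\widehat{\mathscr O}_3)$ for $\mathscr O_\phi$, and strictness backward via $(\widehat{\mathscr O}_3)\Rightarrow(\widehat{M}_2)$ for the envelope model. The paper states these implications in a single sentence each, whereas you take the (useful) extra step of actually verifying $(\mathscr O_1)$--$(\mathscr O_4)$ for $\mathscr O_\phi$, and your flagged concern about locating the support point $\z'$ for $(\mathscr O_4)$ inside $B(\x,M)$ is exactly the one technicality the paper glosses over as well.
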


\begin{proof}
The first part is clear, because every $f$ has at least one model, $\phi^\sharp$, and every model $\phi$ generates an oracle $\mathscr O_\phi$.
Note that if $\phi$ is strict, then so is $\mathscr O_\phi$. Conversely, if $\mathscr O$ is strict, then its upper envelope (\ref{envelope})
is a strict model, as $(\widehat{\mathscr O}_3)$ is easily seen to ensure $(\widehat{M}_2)$.
%\hfill $\square$
\end{proof}

%\begin{remark}
%Note that this does not say that strictness of the upper envelope  $\phi$ of an oracle $\mathscr O$ implies strictness of $\mathscr O$.
%\end{remark}

\subsection{Cutting planes from downshifted tangents}
\label{downshift}
One prominent way to generate cutting planes is the downshifted tangent oracle $\mathscr O^\downarrow$, which we now discuss.
Recall that for  $\g\in \partial f(\z)$
the affine function $t_{\z,\g}(\cdot)=f(\z) + \g^\ttop (\cdot - \z)$ is a tangent  to $f$ at $\z$, or simply a tangent plane.

\begin{definition}
{\rm
Let $\x$ be the current serious iterate, $\z$ a trial step. Let $t_{\z,\g}(\cdot)$ be a tangent of $f$ at $\z$.  
For a fixed constant $c>0$ we define the {\em downshifted tangent} $m_{\z,\g}^\downarrow(\cdot,\x)$ at serious iterate $\x$ and trial step $\z$
associated with the tangent $t_{\z,\g}(\cdot)$ as follows:
\begin{equation}
\label{down}
m_{\z,\g}^\downarrow(\cdot,\x) = 
t_{\z,\g}(\cdot) - \left[  t_{\z,\g}(\x) - f(\x) + c\|\z-\x\|^2\right]_+.
\end{equation}
The quantity appearing on the right
\begin{equation}
\label{downshift}
s(\z,\x,\g) = \left[ t_{\z,\g}(\x) - f(\x) + c\|\z-\x\|^2\right]_+
\end{equation}
 is called the {\em downshift} of the tangent.   The oracle is denoted
 $\mathscr O^\downarrow$.
}
 \hfill $\square$
\end{definition}

\begin{remark}
The explanation is that as long as the tangent  $t_{\z,\g}(\cdot)$ passes below $f(\x) - c \|\z-\x\|^2$ at $\x$, we can use it directly as cutting plane. However, if
$t_{\z,\g}(\x) > f(\x) - c\|\z-\x\|^2$, where $c > 0$ is the same fixed parameter, then we have to shift the tangent down to obtain the cutting plane
$m_{\z,\g}^\downarrow(\cdot,\x)$, 
so that now $m_{\z,\g}^\downarrow(\x,\x)= f(\x)-c\|\z-\x\|^2$, as otherwise the oracle would not respect axiom $(\mathscr O_1)$. The term $c\|\z-\x\|^2$ is chosen for convenience. Any
function $c(\cdot):\mathbb R \to \mathbb R_+$ with $c(t) = {\rm o}(t)$ as $t\to 0$ would give a similar oracle
with downshift $s=\left[t_{\z,\g}(\x) - f(\x) - c(\|\z-\x\|)\right]_+$.
\end{remark}

\begin{remark}
If $\z=\x$, then every tangent $t_{\x,\g}(\cdot)$ at $\x$ is also a cutting plane $m_{\x,\g}^\downarrow(\cdot,\x)$ at serious iterate $\x$ and trial step $\z=\x$, because $\g\in \partial f(\x)$
and because
the downshift (\ref{downshift}) at $z=x$ is automatically zero. 
\end{remark}

\begin{definition}
A cutting plane $m_{\x,\g}^\downarrow(\cdot,\x)$ at serious iterate $\x$ and trial step $\z=\x$  is called  an {\em exactness
plane}.  The same terminology is used for any other oracle $\mathscr O$. \hfill $\square$
\end{definition}

\subsection{Strictness of the downshift oracle $\mathscr O^\downarrow$}
\label{strictness_downshift}

Recall that a locally Lipschitz function $f$ is 
lower $C^1$ at $\x$ if for
$\x_j,\y_j\to \x$ and {\em every}  $\g_j \in \partial f(\y_j)$ there exist
$\epsilon_j \to 0^+$ such that $f(\y_j) \leq f(\x_j) + \g_j^\ttop(\y_j-\x_j) + \epsilon_j \|\y_j-\x_j\|$,  \cite{spingarn}.

\begin{proposition}
\label{mifflin}
The downshift operator $\mathscr O^\downarrow$ satisfies axioms $(\mathscr O_1)$, $(\mathscr O_2)$ and $(\mathscr O_4)$.
If $f$ is Clarke regular at $\x$, then $\mathscr O^\downarrow$  satisfies $(\mathscr O_3)$ at $\x$.
Suppose $f$ is in addition lower $C^1$ at $\x$. Then the downshift operator is  strict at $\x$, i.e., satisfied $(\widehat{\mathscr O}_3)$ at $\x$.
\end{proposition}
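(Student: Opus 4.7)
I would handle the five claims in turn, treating the three universal axioms first and then the two regularity hypotheses.

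For $(\mathscr O_1)$, the plan is to write $a := m^\downarrow_{\z,\g}(\x,\x) = t_{\z,\g}(\x) - s(\z,\x,\g)$ and split into the two cases according as the bracket in \eqref{downshift} is positive or not; in either case a direct calculation yields $a \leq f(\x) - c\|\z-\x\|^2 \leq f(\x)$, and at $\z=\x$ the downshift vanishes, so $(f(\x),\g) \in \mathscr O^\downarrow(\x,\x)$ for every $\g \in \partial f(\x)$. For $(\mathscr O_2)$, the same computation delivers the sharper bound $a_j \leq f(\x) - c\|\z_j-\x\|^2$, so $a_j \to f(\x)$ forces $\z_j \to \x$; upper semicontinuity of the Clarke subdifferential applied to $\g_j \in \partial f(\z_j)$ then yields $\g \in \partial f(\x)$. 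For $(\mathscr O_4)$, I would use local boundedness of $\partial f$ to extract a subsequence with $\g_j \to \g'$, note $\g' \in \partial f(\z)$ by u.s.c., conclude $a_j \to a'$ with $a' = m^\downarrow_{\z,\g'}(\x,\x)$ by continuity of $f$, and take $\z' = \z$ (which lies in $B(\x,M)$ from the envelope context).

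The main obstacle is $(\mathscr O_3)$ under the sole hypothesis of Clarke regularity. Setting $\dd_j := (\z_j-\x)/\|\z_j-\x\|$ and noting that $a_j + \g_j^\ttop(\z_j-\x) = f(\z_j) - s_j$, the axiom reduces to finding $\g_j \in \partial f(\z_j)$ with $s_j = o(\|\z_j-\x\|)$, i.e.\
\begin{equation*}
\frac{f(\z_j)-f(\x)}{\|\z_j-\x\|} - \g_j^\ttop \dd_j + c\|\z_j-\x\| \;\leq\; \epsilon_j \to 0^+ .
\end{equation*}
Along a subsequence with $\dd_j \to \dd$, Clarke regularity at $\x$ combined with the Lipschitz estimate $|f(\x+t\dd_j)-f(\x+t\dd)| \leq Lt\|\dd_j-\dd\|$ gives $(f(\z_j)-f(\x))/\|\z_j-\x\| \to f'(\x;\dd) = f^\circ(\x;\dd)$. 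It then remains to exhibit $\g_j \in \partial f(\z_j)$ with $\liminf_j \g_j^\ttop \dd_j \geq f^\circ(\x;\dd)$. My plan is to choose $\g_j$ attaining $\g_j^\ttop \dd_j = \max\{\g^\ttop\dd_j : \g \in \partial f(\z_j)\} = f^\circ(\z_j;\dd_j)$, and then to combine Lebourg's mean value theorem on $[\x,\z_j]$ (which yields $\bar\g_j \in \partial f(\xi_j)$, $\xi_j \in [\x,\z_j]$, with $\bar\g_j^\ttop\dd_j = (f(\z_j)-f(\x))/\|\z_j-\x\| \to f^\circ(\x;\dd)$) with Clarke regularity at $\x$ to bound $f^\circ(\z_j;\dd_j)$ from below by a quantity converging to $f^\circ(\x;\dd)$. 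The delicate step is lifting a subgradient witnessing $f^\circ(\x;\dd)$ from the intermediate point $\xi_j \in [\x,\z_j]$ back to the endpoint $\z_j$; this is exactly where the Clarke-regularity hypothesis is needed.

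The strict version $(\widehat{\mathscr O}_3)$ under the stronger lower $C^1$ hypothesis then comes almost for free. For $\x_j,\z_j \to \x$ and \emph{any} $\g_j \in \partial f(\z_j)$, the defining property of lower $C^1$ at $\x$ produces $\epsilon_j \to 0^+$ with $f(\z_j) \leq f(\x_j) + \g_j^\ttop(\z_j-\x_j) + \epsilon_j\|\z_j-\x_j\|$, so
\begin{equation*}
s_j \;=\; \bigl[f(\z_j)-f(\x_j)-\g_j^\ttop(\z_j-\x_j)+c\|\z_j-\x_j\|^2\bigr]_+ \;\leq\; \bigl(\epsilon_j + c\|\z_j-\x_j\|\bigr)\,\|\z_j-\x_j\|,
\end{equation*}
which is $o(\|\z_j-\x_j\|)$. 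Since $a_j + \g_j^\ttop(\z_j-\x_j) = f(\z_j) - s_j$, this rearranges into the inequality required by $(\widehat{\mathscr O}_3)$, and because the argument works for an arbitrary selection $\g_j \in \partial f(\z_j)$, strictness holds at $\x$.
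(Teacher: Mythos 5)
Your proofs of $(\mathscr O_1)$, $(\mathscr O_2)$, $(\mathscr O_4)$ and of strictness $(\widehat{\mathscr O}_3)$ under the lower $C^1$ hypothesis are correct and track the paper's argument closely; the uniform bound $a \leq f(\x) - c\|\z-\x\|^2$ you extract for $(\mathscr O_2)$ is a cleaner packaging of the paper's two-case analysis of the downshift, and your $(\mathscr O_4)$ argument via outer semicontinuity of $\partial f$ together with continuity of the downshift functional is exactly the paper's.

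The problem is $(\mathscr O_3)$, where your proof is a plan rather than a proof, and you say so yourself. You reduce the axiom correctly to exhibiting some $\g_j \in \partial f(\z_j)$ with $\liminf_j \g_j^\ttop\dd_j \geq f^\circ(\x,\dd)$ along a subsequence with $\dd_j \to \dd$, and you name the lift from the Lebourg intermediate point $\xi_j \in [\x,\z_j]$ to the endpoint $\z_j$ as the delicate step ``where the Clarke-regularity hypothesis is needed'' --- but you never perform it, and it is not a step one can wave off. Outer semicontinuity of $\partial f$ only delivers the \emph{opposite} inequality $\limsup_j \g_j^\ttop\dd_j \leq f^\circ(\x,\dd)$, for every selection $\g_j\in\partial f(\z_j)$, including the maximizing one with $\g_j^\ttop\dd_j = f^\circ(\z_j,\dd_j)$; and Clarke regularity at $\x$ is a pointwise condition that controls neither $\partial f(\z_j)$ nor $f^\circ(\z_j,\cdot)$ at nearby $\z_j$. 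You should also be aware that the paper's printed argument for this item leans on the same unsupported point: it notes $\limsup_k \g_k^\ttop\h_k \leq f^\circ(\x,\h)$ and then asserts ``by regularity, the two limits coincide'', but only the upper bound is shown, and the upper bound alone does not force $\epsilon_k = \bigl[\,t_k^{-1}(f(\x+t_k\h_k)-f(\x)) - \g_k^\ttop\h_k\,\bigr]_+ \to 0$. To close this gap you would need some hypothesis controlling subgradients \emph{near} $\x$ (regularity in a neighbourhood, upper or lower $C^1$, semismoothness), not just regularity at the single point $\x$; a piecewise-linear Lipschitz $f$ on $\mathbb R$ agreeing with $|\cdot|$ near $0$ except on shrinking dips with slopes $-1$ and $s_n \downarrow 1$ is regular at $0$ but fails $(\mathscr O_3)$ along points $\z_j$ taken in the descending portions, since there $\partial f(\z_j)=\{-1\}$ and the downshift is of order $2\|\z_j\|$.
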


\begin{proof}
1) Axiom $(\mathscr O_1)$ is clear. We check 
$(\mathscr O_2)$. Let $(a_j,\g_j)\in \mathscr O^\downarrow(\z_j,\x)$ with $a_j\to f(\x)$,  $\g_j \to \g$, and $\z_j$ bounded. Passing to a subsequence,
assume $\z_j\to \z$. Now $m_j^\downarrow(\cdot,\x) = a_j + \g_j^\ttop(\cdot - \x) = f(\z_j) + \g_j^\ttop(\cdot - \z_j) - s_j$, where $s_j = s(\x,\z_j,\g)$ is the 
corresponding downshift (\ref{downshift}). From $a_j \to f(\x)$ we get
$f(\z_j) + \g_j^\ttop(\x-\z_j) - s_j \to f(\x) = f(\z) + \g^\ttop(\x-\z) - s$, where $s = \lim s_j$ and $\g \in \partial f(\z)$. There are two cases, either $s_j \to 0$,
or $s_j \to s > 0$. In the first case $s=0$ we obtain
\begin{equation}
\label{downshift2}
s_j = \left[    f(\z_j) + \g_j^\ttop(\x-\z_j) - f(\x) + c\|\z_j-\x\|^2\right]_+ \to 0.
\end{equation}
But since $f(\z_j) + \g_j^\ttop(\x-\z_j) - f(\x) \to 0$ due to $s=0$, we must also have $c\|\z_j-\x\|^2 \to 0$, proving $\z_j \to \x$. Hence $\z=\x$, and so
$\g \in \partial f(\x)$. Now in the case $s > 0$ we get $s_j > 0$ from some counter onward, hence
$f(\z_j) + \g_j^\ttop(\x-\z_j) -  f(\z_j) - \g_j^\ttop(\x-\z_j) + f(\x) - c\|\z_j-\x\|^2
\to f(\x) = f(\z) + \g^\ttop(\x-\z) - f(\z) - \g^\ttop(\x-\z) + f(\x) - c\|\z-\x\|^2$, which implies
$f(\x) + c\|\z_j-\x\|^2 \to f(\x)$, hence again $\z_j \to \x$, leading to the same conclusion.

2) Let us check  $(\mathscr O_4)$. This is obviously the analogue of the model axiom $(M_3)$.
Fix $\z_j \to \z$, $\y_j\to \y$ and $\x_j\to \x$,
and  $(a_j,\g_j)\in \mathscr O^\downarrow(\z_j,\x_j)$. Then $m_j^\downarrow(\cdot,\x_j) = a_j + \g_j^\ttop(\cdot - \x_j) = f(\z_j) + \g_j^\ttop(\cdot - \z_j) - s_j$ is the downshifted
tangent at $\z_j$, i.e.
\begin{equation}
\label{downshift3}
s_j = \left[    f(\z_j) + \g_j^\ttop(\x_j-\z_j) - f(\x) + c\|\z_j-\x_j\|^2\right]_+.
\end{equation}
Choose a subsequence $j\in J$ such that $\ell:=\displaystyle\lim_{j\in J} m_j^\downarrow(\y_j,\x_j) = \limsup_{j\to \infty} m_j^\downarrow(\y_j,\x_j)$.
Then this limit is $\ell= f(\z) + \g^\ttop(\y-\x) - s$, where $s=\lim_{j\in J} s_j$. We have to find $\z'$ such that $\ell \leq a' + \g'^\ttop(\y-\x)$ for $(a',\g')\in \mathscr O(\z',\x)$.
We simply choose $\z'=\z$ and $\g'=\g$, because then $f(\z) + \g^\ttop(\cdot-\z) - s$ should be the downshifted tangent at $\z$. 
For that to be confirmed, we have just to
show that $s \leq s(\z,\x,\g)$, in other words, we have to
show upper semi-continuity $\lim_{j\in J} s(\z_j,\x_j,\g_j) \leq s(\z,\x,\g)$ of the downshift. But that follows immediately here due to convergence
of all the elements in (\ref{downshift3}).

3)
We have to verify axiom $(\mathscr O_3)$ at $\x$ with $f$ regular at $\x$.   Consider a sequence $\y^k\to \x$. Let $m_k(\cdot,\x) := m_{\y^k,\g_k}^\downarrow(\cdot,\x)$ be a cutting plane at serious iterate $\x$
and trial point $\y^k$, where $\g_k\in \partial f(\y^k)$. We have to find $\epsilon_k\to 0^+$ such that
$f(\y^k) \leq m_k(\y^k,\x) + \epsilon_k \|\y^k-\x\|$.
Now let $t_{\y^k,\g_k}(\cdot)$ be the tangent to $f$ from which $m_k$ is downshifted, and let $s_k = s(\y^k,\x,\g_k)$ be the downshift. Consider the case 
$s_k > 0$. Then $m_k(\cdot,\x) = f(\x) + \g_k^\ttop (\cdot - \x) - c \|\y^k-\x\|^2$, and we have to find $\epsilon_k\to 0^+$  such that
\[
f(\y^k) \leq f(\x) + \g_k^\ttop (\y^k-\x) - c \|\y^k-\x\|^2 + \epsilon_k \|\y^k-\x\|.
\]
If we put $\h_k = (\y^k-\x)/\|\y^k-\x\|$, $t_k = \|\y^k-\x\|$, then this may be re-written as
\begin{equation}
\label{dini}
\frac{f(\x + t_k\h_k) - f(\x)}{t_k} \leq \g_k^\ttop \h_k + \epsilon_k.
\end{equation}
Passing to a subsequence, we may assume $\h_k \to \h$ for some $\|\h\|=1$, and then since $f$ is locally Lipschitz, the left hand term converges to
the Dini derivative $f'(\x,\h)$. On the right hand side, on the other hand,
we have $\limsup_{k\to \infty} \g_k^\ttop \h_k \leq f^\circ(\x,\h)$, and by regularity, the two limits coincide. 
In other words, we find that $\epsilon_k = \max\{0,t_k^{-1}(f(\x+t_k\h_k)-f(\x))-\g_k^\ttop \h_k\} \to 0^+$ does the job.
That proves $(\mathscr O_3)$.

4) It remains to verify $(\widehat{\mathscr O}_3)$ when $f$ is
lower $C^1$ at $\x$. Fix $\z_j,\x_j \to \x$ and let $(a_j,\g_j)\in \mathscr O^\downarrow(\z_j,\x_j)$. Then by hypothesis there exist
$\epsilon_j \to 0^+$ such that $f(\z_j) \leq f(\x_j) + \g_j^\ttop (\z_j-\x_j) + \epsilon_j \|\z_j-\x_j\|$. Adding $\g_j^\ttop(\cdot - \z_j)$ on both sides gives
$t_{\z_j,\g_j}(\cdot) \leq f(\x_j) + \g_j^\ttop (\cdot - \x_j) + \epsilon_j \|\z_j-\x_j\|$. Hence the downshift satisfies
$s_j = \left[ t_{\z_j,\g_j}(\x_j)-f(\x_j) + c\|\x_j-\z_j\|^2\right]_+
\leq  \epsilon_j \|\z_j-\x_j \| + c\|\x_j-\z_j\|^2 =:\widetilde{\epsilon}_j(\|\z_j-\x_j\|)$ with
$\widetilde{\epsilon}_j = \epsilon_j + c\|\z_j-\x_j\|\to 0^+$. 
Then $f(\z_j) = t_{\z_j,\g_j}(\z_j) = m_{\z_j,\g_j}^\downarrow(\z_j,\x_j) + s_j \leq m_{\z_j,\g_j}^\downarrow(\z_j,\x_j) + \widetilde{\epsilon}_j \|\z_j-\x_j\|$
shows strictness.
%\hfill $\square$
\end{proof}

A different way to say that $f$ is lower $C^1$ at $\bar{\x}$ is that for every $\epsilon > 0$ there exists $\delta > 0$
such that for all $\x,\z \in B(\bar{\x},\delta)$ and {\em every}  $\g\in \partial f(\z)$ one has
$
f(\x) - f(\z) \geq \g^\ttop (\x-\z) - \epsilon \|\x-\z\|,
$
which may also be written as
\[
f(\x) - f(\z) \geq f^\circ(\z,\x-\z) - \epsilon \|\x-\z\|.
\]
We weaken  this as follows:  We say that $f$ is weakly lower $C^1$
at $\bar{\x}$ if for every $\epsilon > 0$ there exists $\delta > 0$
such that for all $\x,\z\in B(\bar{\x},\delta)$ and {\em some} $\g \in \partial f(\z)$ we have
$f(\x)-f(\z) \geq \g^\ttop (\x-\z) - \epsilon \|\x-\z\|$, which may be written as 
\begin{equation}
\label{wlc1}
f(\x)-f(\z) \geq -f^\circ(\z,\z-\x) - \epsilon \|\x-\z\|.
\end{equation}
There is an oracle associated with this property, which is slightly stronger than
$\mathscr O^\downarrow$, and which we denote $\mathscr O^{\downarrow\downarrow}$. For $\mathscr O^{\downarrow\downarrow}$ we
do not take an arbitrary tangent to $f$ at $\z$,
but the specific one $t_{\z,\g'}(\cdot)$
where $\g'\in \partial f(\z)$ satisfies $\g'^\ttop(\x-\z) = -f^\circ(\z,\z-\x)$. Then we downshift $t_{\z,\g'}$ as before to obtain the $\mathscr O^{\downarrow\downarrow}$-cutting plane.

\begin{proposition}
Suppose  $f$ is weakly lower $C^1$ at $\x$. Then $\mathscr O^{\downarrow\downarrow}$ is strict at $\x$. In consequence, every weakly lower $C^1$
function has a strict model.
\end{proposition}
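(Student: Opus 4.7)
The plan is to adapt step~4 of the proof of Proposition~\ref{mifflin}, replacing the lower $C^1$ inequality by (\ref{wlc1}) and exploiting the particular subgradient built into $\mathscr O^{\downarrow\downarrow}$. First I would note that $\mathscr O^{\downarrow\downarrow}$ is well defined: for each $\z,\x$ the maximum in $f^\circ(\z,\z-\x)=\max\{\g^\ttop(\z-\x):\g\in\partial f(\z)\}$ is attained, because the objective is linear and $\partial f(\z)$ is nonempty, convex and compact. Any maximizer $\g'$ satisfies $\g'^\ttop(\x-\z)=-f^\circ(\z,\z-\x)$, which means precisely that $\g'$ \emph{minimizes} $\g\mapsto \g^\ttop(\x-\z)$ over $\partial f(\z)$. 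Verification of $(\mathscr O_1)$, $(\mathscr O_2)$ and $(\mathscr O_4)$ then follows the same reasoning as parts~1)--2) of the proof of Proposition~\ref{mifflin}, since those arguments exploit only $\g_j\in\partial f(\z_j)$ and the structural formula (\ref{down}) of the downshifted tangent.

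The heart of the argument is the verification of $(\widehat{\mathscr O}_3)$ at $\x$. Fix $\x_j,\z_j\to \x$ and let $\g_j$ be the specific subgradient produced by $\mathscr O^{\downarrow\downarrow}(\z_j,\x_j)$, so that $\g_j^\ttop(\x_j-\z_j)=-f^\circ(\z_j,\z_j-\x_j)$. By weak lower $C^1$-ness (\ref{wlc1}) at $\x$, for $j$ large enough there are $\epsilon_j\to 0^+$ and \emph{some} $\tilde\g_j\in\partial f(\z_j)$ with
\[
f(\x_j)-f(\z_j)\ \geq\ \tilde\g_j^\ttop(\x_j-\z_j)-\epsilon_j\|\x_j-\z_j\|.
\]
The key observation is that since $\g_j^\ttop(\x_j-\z_j)\leq \tilde\g_j^\ttop(\x_j-\z_j)$, the inequality persists when $\tilde\g_j$ is replaced by $\g_j$; this gives $t_{\z_j,\g_j}(\x_j)-f(\x_j)\leq \epsilon_j\|\z_j-\x_j\|$, hence
\[
s_j:=\bigl[t_{\z_j,\g_j}(\x_j)-f(\x_j)+c\|\z_j-\x_j\|^2\bigr]_+\ \leq\ \widetilde\epsilon_j\|\z_j-\x_j\|,\qquad \widetilde\epsilon_j:=\epsilon_j+c\|\z_j-\x_j\|\to 0^+.
\]
Since $m^{\downarrow\downarrow}_{\z_j,\g_j}(\z_j,\x_j)=t_{\z_j,\g_j}(\z_j)-s_j=f(\z_j)-s_j$, one obtains $f(\z_j)\leq m^{\downarrow\downarrow}_{\z_j,\g_j}(\z_j,\x_j)+\widetilde\epsilon_j\|\z_j-\x_j\|$, which is $(\widehat{\mathscr O}_3)$ at $\x$. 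The final assertion about strict models is then immediate from the earlier result that the envelope (\ref{envelope}) of a strict oracle is a strict model of $f$.

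The main obstacle is the bridge between the weak lower $C^1$ definition, which only guarantees the inequality for some (unspecified) $\g\in\partial f(\z)$, and the specific $\g'$ hard-wired into $\mathscr O^{\downarrow\downarrow}$. Everything turns on the observation that $\g'$ minimizes $\g\mapsto \g^\ttop(\x-\z)$ on $\partial f(\z)$, so substituting $\g'$ for any other feasible $\tilde\g$ only weakens the right-hand side of (\ref{wlc1}) and thus preserves it. Without this point, an arbitrary $\g\in\partial f(\z)$ could make $t_{\z,\g}(\x)$ far exceed $f(\x)$ and force the downshift $s$ to stay bounded away from zero, which would destroy strictness.
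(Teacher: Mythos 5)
Your proof is correct and follows essentially the same route as the paper: verify $(\widehat{\mathscr O}_3)$ by showing that the particular subgradient built into $\mathscr O^{\downarrow\downarrow}$ minimizes $\g\mapsto\g^\ttop(\x_j-\z_j)$, so (\ref{wlc1}) controls the downshift by $\widetilde\epsilon_j\|\z_j-\x_j\|$ with $\widetilde\epsilon_j\to 0^+$. The paper applies (\ref{wlc1}) directly in the form $-f^\circ(\z_j,\z_j-\x_j)=\g_j'^\ttop(\x_j-\z_j)$, whereas you unpack the same fact via the auxiliary $\tilde\g_j$ from the ``some $\g$'' phrasing of weak lower $C^1$-ness; this is a cosmetic difference, not a different argument.
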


\begin{proof}
We have to check $(\widehat{\mathscr O}_3)$. Let $\z_j,\x_j\to \x$, then $(a_j,\g_j')= \mathscr O^{\downarrow\downarrow}(\z_j,\x_j)$ satisfies
$\g_j'^\ttop(\x_j-\z_j)=\inf\{\g^\ttop (\x_j-\z_j): \g\in \partial f(\z_j)\}$. By (\ref{wlc1}) we have $f(\x_j)-f(\z_j) \geq \g_j'^\ttop (\x_j-\z_j) - \epsilon_j \|\x_j-\z_j\|$ for
certain $\epsilon_j\to 0^+$. That can be written $f(\x_j) \geq t_{\z_j,\g_j'}(\x_j) - \epsilon_j \|\z_j-\x_j\|$, hence
$\epsilon_j \|\z_j-\x_j\| + c \| \z_j-\x_j\|^2 \geq  t_{\z_j\g_j'}(\x_j)-f(\x_j) + c\|\z_j-\x_j\|^2$. This shows that the downshift
(\ref{downshift}) is $s_j \leq \widetilde{\epsilon}_j \|\z_j-\x_j\|$, where $\widetilde{\epsilon}_j = \epsilon_j + c\|\z_j-\x_j\| \to 0^+$. 
Hence with the argument of part 4) above, $\mathscr O^{\downarrow\downarrow}$ is strict at ${\x}$.
%\hfill $\square$
\end{proof}

\begin{proposition}
Suppose the function  $f$ is weakly upper $C^1$ or weakly lower $C^1$ at every $\x$. Then $f$ admits
a strict cutting plane oracle, and consequently has a strict model.
%\hfill $\square$
\end{proposition}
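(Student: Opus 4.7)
The plan is to glue the two strict sub-oracles already constructed, namely the standard oracle $\mathscr O^\sharp$ and the enhanced downshift oracle $\mathscr O^{\downarrow\downarrow}$, into a single oracle by taking their pointwise union. By hypothesis the sets
\[
\Omega_1 := \{\x \in \mathbb R^n : f \text{ is weakly upper } C^1 \text{ at } \x\}, \quad
\Omega_2 := \{\x \in \mathbb R^n : f \text{ is weakly lower } C^1 \text{ at } \x\}
\]
cover $\mathbb R^n$. On $\Omega_1$ the standard oracle is strict by Proposition \ref{upper_strict} together with the fact, noted in subsection \ref{model_oracle}, that a strict model induces a strict oracle; on $\Omega_2$ the oracle $\mathscr O^{\downarrow\downarrow}$ is strict by the previous proposition. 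So if we define
\[
\mathscr O(\z,\x) := \mathscr O^\sharp(\z,\x) \,\cup\, \mathscr O^{\downarrow\downarrow}(\z,\x),
\]
we expect $\mathscr O$ to be a strict cutting plane oracle of $f$, and the conclusion then follows from the Proposition of subsection \ref{sect_oracle} together with the remark that a strict oracle has a strict model as its upper envelope.

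First I would verify that $\mathscr O$ is bounded on bounded sets and satisfies $(\mathscr O_1)$, which is immediate since both sub-oracles do, and the exactness element $(f(\x),\g)$ with $\g\in \partial f(\x)$ is already contained in $\mathscr O^\sharp(\x,\x)$. For $(\mathscr O_2)$ and $(\mathscr O_4)$ I would use the standard subsequence trick: given a sequence $(a_j,\g_j) \in \mathscr O(\z_j,\x_j)$, pass to a subsequence whose elements all lie in the \emph{same} sub-oracle, and then invoke the corresponding axiom for that sub-oracle. For $(\mathscr O_4)$ one first extracts a subsequence along which the $\limsup$ is attained, then refines further so that all terms come from one sub-oracle; the produced $\z'\in B(\x,M)$ and $(a',\g')$ belong to that sub-oracle and hence to $\mathscr O(\z',\x)$.

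For the strictness axiom $(\widehat{\mathscr O}_3)$ the partition enters decisively: given $\x \in \mathbb R^n$ and sequences $\x_j, \z_j \to \x$, we have $\x \in \Omega_1$ or $\x \in \Omega_2$. In the first case the $(a_j,\g_j)$ witnessing strictness can be chosen from $\mathscr O^\sharp(\z_j,\x_j) \subset \mathscr O(\z_j,\x_j)$; in the second case they can be drawn from $\mathscr O^{\downarrow\downarrow}(\z_j,\x_j) \subset \mathscr O(\z_j,\x_j)$. Either way $(\widehat{\mathscr O}_3)$ holds at $\x$, and consequently $(\mathscr O_3)$ also holds. Thus $\mathscr O$ is a strict cutting plane oracle, and its envelope, by the strictness half of the Proposition in subsection \ref{model_oracle}, is a strict model of $f$.

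The main obstacle I foresee is purely bookkeeping: one must be careful that the union of two oracles remains an oracle, because axioms $(\mathscr O_2)$ and $(\mathscr O_4)$ are closure-type conditions on sequences, and in principle a sequence drawn from $\mathscr O$ may oscillate between the two sub-oracles. The subsequence argument resolves this, but it is essential to note that the point $\x$ at which $(\widehat{\mathscr O}_3)$ is tested is \emph{fixed}, so we pick the strict sub-oracle according to that $\x$ before extracting the cutting planes; there is no need for a uniform choice of sub-oracle across different points.
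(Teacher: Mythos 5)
Your proposal is correct and takes essentially the same approach as the paper, which defines the oracle $\mathscr O^{\sharp\downarrow\downarrow}$ (keeping the plane with the larger value at $\z$) and explicitly mentions $\mathscr O^\sharp \vee \mathscr O^{\downarrow\downarrow}$ — your union — as an equivalent alternative, both by appeal to Remark \ref{idea}. Your write-up adds a more careful verification of axioms $(\mathscr O_2)$ and $(\mathscr O_4)$ via the subsequence argument, which the paper leaves implicit.
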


\begin{proof}
We can use the idea of remark \ref{idea}. The standard oracle $\mathscr O^\sharp$ is strict at those $\x$ where $f$ is weakly upper $C^1$,
and  $\mathscr O^{\downarrow\downarrow}$ is strict at those $\x$ where $f$ is weakly lower $C^1$, so the oracle
$\mathscr O^{\sharp\downarrow\downarrow}(\z,\x)$ which among the two possible planes takes the one which has the larger value at $\z$ will
be strict. Naturally, one could also take the oracle $\mathscr O^\sharp \vee \mathscr O^{\downarrow\downarrow}$ which takes both planes.
The corresponding envelope models are then strict. 
%\hfill $\square$
\end{proof}

\begin{example}
For  $f$ in example \ref{zig_zag} neither $\mathscr O^\downarrow$ nor
$\mathscr O^{\downarrow\downarrow}$ is strict at $0$. With the same notation,
for $s_k \in (t_{2k},t_{2k-1})$ the slope of the tangent at $s_k$ is $-1$, so the cutting plane
$\mathscr O^\downarrow(s_k,0)$
is the line with slope $-1$ passing through the point $(0,-cs_k^2)$, i.e.,  $m^\downarrow(s_k,0) = -cs_k^2 -s_k$. For $(\mathscr O_3)$ at $0$ we would require
$f(s_k) \leq m^\downarrow(s_k,0) + {\rm o}(s_k)$, hence $t_{2k}^2 - (s_k-t_{2k}) \leq -cs_k^2-s_k + {\rm o}(s_k)=-s_k+{\rm o}(s_k)$. Choose $s_k$ such that $s_k-t_{2k} = t_{2k}^2/2$, then 
$s_k = t_{2k} + t_{2k}^2/2$, so ${\rm o}(s_k)={\rm o}(t_{2k})$, and we have to assure
$t_{2k}^2/2 \leq -t_{2k} - t_{2k}^2/2 + {\rm o}(t_{2k})$, and that is impossible, as the right hand side is asymptotically $<0$. Here the downshift
operator $\mathscr O^\downarrow$ is not even an oracle at $0$, let alone a strict one.
 A similar argument applies to $f(\x) = \x^2 \sin\x^{-1}$.
 \end{example}

\begin{remark}
Downshift is used in the early versions of the bundle technique to deal heuristically with non-convex cases.
See e.g. the codes N1CV2 and N2BN1 by Lemar\'echal and Sagastiz\'abal \cite{lema-saga}, or in 
Zowe's BT package \cite{zowe}.  
Mifflin \cite{mifflin3}  justifies the downshift oracle theoretically for a linesearch algorithm. For our justification of downshift in the bundle method see e.g. \cite{cutting,gabarrou}.
\end{remark}

\subsection{Oracles with infinitely many cuts}
\label{many}
We consider an eigenvalue optimization problem
$$\min\{ \lambda_1 \left( F(\x)\right): \x \in C\},$$ where $F:\mathbb R^n \to \mathbb S^m$ is a $C^1$-mapping  into the space 
$\mathbb S^m$ of $m\times m$ symmetric or Hermitian matrices, and $\lambda_1:\mathbb S^m\to \mathbb R$ is the maximum eigenvalue. Suppose we use the natural model
$\phi(\cdot,\x) = \lambda_1 \left( F(\x) + F'(\x)(\cdot-\x) \right)$ of $f=\lambda_1 \circ F$. Let
$\mathscr G = \{G \in \mathbb S^m: G \succeq 0, {\rm Tr}(G)=1\}$, then
$\lambda_1(X) = \max\{G \bullet X: G \in \mathcal G\}$, where $G\bullet X = {\rm Tr}(GX)$ is the scalar product in $\mathbb S^m$.
Let
$\z^k$ be a null step and  suppose the multiplicity of $\phi(\z^k,\x) = \lambda_1\left( F(\x) + F'(\x) (\z^k-\x) \right)$ is $r > 1$. Let
$Q_k$ be a $m \times r$-matrix whose $r$ columns form an orthonormal basis of the maximum eigenspace of $F(\x) + F'(\x)(\z^k-\x)$.
Put $\mathcal G_k =\{Q_k^\ttop YQ_k: Y \in \mathbb S^r, Y \succeq 0, {\rm Tr}(Y)=1\}$, then $\mathcal G_k \subset \mathcal G$
and 
\begin{align*}
&\max\left\{G'\bullet \left[F(\x) + F'(\x)(\cdot - \x)\right]: G' \in \mathcal G_k\right\} \\
&\quad=\max\left\{ Y \bullet \left[Q_k ( F(\x) + F'(\x) (\cdot-\x) \right] Q_k^\ttop: Y\succeq 0, {\rm Tr}(Y)=1\right\}\\
&\quad= \lambda_1 \left( Q_k \left[  F(\x) + F'(\x) (\cdot-\x) \right] Q_k^\ttop  \right). 
\end{align*}
This means 
we choose as oracle the infinite set $\mathscr O^{\rm spec}(\z^k,\x)=\{(a(Y),\g(Y)): Y \succeq 0, {\rm Tr}(Y)=1\}$, where
$a(Y)=Y\bullet Q_k F(\x) Q_k^\ttop$ and $\g(Y)=F'(\x)^*  Q_k^\ttop YQ_k$, with $Q_k^\ttop YQ_k \in \mathbb S^m$ and
$F'(\x)^*:\mathbb S^m\to \mathbb R^n$ the adjoint of $F'(\x):\mathbb R^n \to \mathbb S^m$.
For practical aspects of this type of oracle, which leads to spectral bundle methods, see \cite{ANP:2008,ANP:2009,helm1,helm2,helm3,LO:00,na:03_1,na:03_2,thevenet:04,TNA:04,TNA:05}.
The key observation is that the tangent program (\ref{tangent}) in this approach will be a convex SDP.

\subsection{Working models}
In our trust-region method the tangent program is based on
a {\em working model}  $\phi_k(\cdot,\x)$ of $f$ at serious iterate $\x$.
In the bundle literature originating from Lemar\'echal's \cite{lemarechal1,lemarechal2}, this model
is traditionally denoted as $\check{f}_k$ and referred to as a model of $f$ at $\x$. In our approach we distinguish between model $\phi$ and working model $\phi_k$
from a reason which will become clear shortly.

Let $\mathscr O$ be a cutting plane oracle for $f$. Then the working model at serious iterate $\x$ and inner loop counter $k$ has the form
$\phi_k(\cdot,\x)=\sup\{ a + \g^\ttop(\cdot - \x): (a,\g)\in \mathscr W_k\}$, where
the sets $\mathscr W_k$ are generated recursively through $\mathscr O$: Suppose $\z^k$ is an unsuccessful trial step (a null step)
obtained via (\ref{trial}) from the solution $\y^k$ of the tangent program (\ref{tangent}) based on the $k^{\rm th}$ working model $\phi_k(\cdot,\x)$. Then the $(k+1)^{\rm st}$ working model
$\phi_{k+1}(\cdot,\x) = \sup\{ a + \g^\ttop(\cdot - \x): (a,\g) \in \mathscr W_{k+1}\}$ is obtained from $\phi_k$ by the following rules:
\begin{itemize}
\item[$(W_1)$] At least one exactness plane $f(\x) + \g^\ttop(\cdot-\x)$ with  $(f(\x),\g)\in \mathscr O(\x,\x)$ is included in $\phi_{k+1}$. That is,
$(f(\x),\g) \in \mathscr W_{k+1}$ for some  $(f(\x),\g)\in \mathscr O(\x,\x)$.
\item[$(W_2)$] The aggregate plane $a^*_k + \g^{*\ttop}_k(\cdot-\x)$ associated with the solution $\y^k$ of the $k^{\rm th}$-tangent program (\ref{tangent})
is included in $\phi_{k+1}$, i.e., $(a^*_k,\g^*_k)\in \mathscr W_{k+1}$. (See section \ref{tangent}).
\item[$(W_3)$] All cutting planes $(a,\g)\in \mathscr O(\z^k,\x)$ are simultaneously included in $\phi_{k+1}$, that is,
$\mathscr O(\z^k,\x) \subset \mathscr W_{k+1}$.
\item[$(W_4)$] Planes contributing to $\phi_{k+1}(\cdot,\x)$ via $\mathscr W_{k+1}$ other than those  in $(W_1) - (W_3)$ must already have been elements of 
$\mathscr W_k$, but not all $(a,\g)\in \mathscr W_k$ are kept in
$\mathscr W_{k+1}$.
\end{itemize}

The initialization of $\phi_1(\cdot,\x)$ is as follows. We request
that $\phi_1(\cdot,\x)$  contains at least one exactness plane generated by $\mathscr O$, i.e., $f(\x) + \g^\ttop(\cdot - \x) \leq \phi_1(\cdot,\x)$ for some
$(f(\x),\g)\in \mathscr O(\x,\x)$. In other words, $(f(\x),\g) \in \mathscr W_1 \subset \mathscr O(\x,\x)$.

If in addition a positive semi-definite symmetric matrix $Q(\x)\succeq 0$ is available as a substitute for the Hessian of $f$ at $\x$, then
we call $\Phi_k(\cdot,\x) = \phi_k(\cdot,\x) + \frac{1}{2} (\cdot-\x)^\ttop Q(\x) (\cdot-\x)$
a {\em second-order working model} of $f$ at serious iterate $\x$.  We shall occasionally
use the semi-norm $|\y|_Q^2 = \y^\ttop Q\y$, so that $\Phi_k(\cdot,\x) = \phi_k(\cdot,\x) + \frac{1}{2}|\cdot - \x|_Q^2$.

%\begin{definition}
%{\rm 
%$\Phi_k(\cdot,\x)$ is called a {\em second order working model} associated with the first-order working model
%$\phi_k(\cdot,\x)$ at inner loop counter $k$ and serious iterate $\x$. We say that $\phi_k$ is a working model associated with the oracle $\mathscr O$ on which it is based.}
%\hfill $\square$
%\end{definition}

\begin{remark}
By construction we have $\phi_k(\cdot,\x) \leq \phi(\cdot,\x)$ at all inner loop instants $k$, where $\phi$ is the envelope model (\ref{envelope}) of $\mathscr O$.
By convexity we automatically have $\partial_1 \phi_k(\x,\x) \subset \partial_1 \phi(\x,\x) \subset \partial f(\x)$, so that
$0\in \partial ( \phi_k(\cdot,\x) +i_C)(\x)$ implies that $\x$ is a Clarke critical point of (\ref{program}).   This is crucial for practice, as we can stop the algorithm as soon as the tangent program
based on the working model $\phi_k(\cdot,\x)$
finds no model reduction step.
\end{remark}

\begin{remark}
Note that we do not necessarily have $\mathscr W_k \subset \mathscr W_{k+1}$, as this would lead to
tangent programs of increasing size. Rules $(W_1)-(W_4)$ only require that:
a) An exactness plane  assures  $\phi_{k+1}(\x,\x) = f(\x)$; b) the aggregate plane $(a_k^*,\g_k^*)$  assures $\phi_{k+1}(\y^k,\x) \geq \phi_k(\y^k,\x)$,
and c) the cutting planes $(a_k,\g_k)\in \mathscr O(\z^k,\x)$ are intended to bring the value $\phi_{k+1}(\z^k,\x)$ as close as possible
to the unduly large value $f(\z^k)$, because that value was much larger than the predicted value $\Phi_k(\z^k,\x)$, having caused the failure of $\z^k$.
\end{remark}

\begin{example}
Consider  $f(\x) = \max_{i\in I} f_i(\x)$, where $I$ is infinite and the $f_i$ are class $C^1$. The natural model is $\phi(\cdot,\x)=\max_{i\in I} f_i(\x) + \nabla f_i(\x)^\ttop(\cdot-\x)$,
but computing it might be costly for large size  $I$, and we might prefer a working
model  $\phi_k(\cdot,\x) = \max_{i \in I_k} f_i(\x) + \nabla f_i(\x)^\ttop (\cdot - \x)$ with some small subset $I_k$ of $I$. 
In $H_\infty$-optimization \cite{tac,multidisk,ANP:2008,ANP:2009} we have observed that diligent choices of an initial set $I_1 \subset I$ may greatly influence the performance of the method,
see also \cite{boyd1,boyd2}. This is encouraged by rule $(W_4)$.
\end{example}

%\begin{remark}
%As opposed to the previous example, which is motivated by practical considerations, our convergence theory allows also covers the case
%where $\phi_k = \phi$. In this case simplifications occur in step 7 of the algorithm, because we always have $\widetilde{\rho}_k \geq 1$, with equality
%when $Q_k=0$, so the test in step 7 becomes redundant. That means, the trust-region radius is reduced when a trial step $\z^k$ is not successful.
%Therefore $\phi_k = \phi$  resembles the classical trust-region method, and this has tempted various authors
%to believe that non-smooth trust-regions could be organized similar to the classical case (see e.g. \cite{Conn}). Our convergence analysis shows that this
%is wrong, i.e., in general the test $\widetilde{\rho}_k \geq \widetilde{\gamma}$ is step 7 of the algorithm cannot be avoided.
%\end{remark}

\section{Non-smooth trust-region method}
\label{optimize}
In this section we present the main elements of the algorithm.

\subsection{Tangent program, aggregate plane and trial steps}
\label{tangent}
Once a second-order working model $\Phi_k(\cdot,\x)=\phi_k(\cdot,\x) + \frac{1}{2}(\cdot-\x)^\ttop Q(\x)(\cdot-\x)$ at serious iterate $\x$ and inner loop counter $k$ is fixed,
we solve the trust-region tangent program
\begin{eqnarray}
\label{tangent}
\begin{array}{ll}
\mbox{minimize} & \Phi_k(\y,\x) \\
\mbox{subject to} &\y\in C\\
& \|\y - \x\| \leq R_k, 
\end{array}
\end{eqnarray}
where $R_k > 0$ is the current trust-region radius. We obtain a solution $\y^k\in C$ of (\ref{tangent}), 
which  is unique in case $Q(\x) \succ 0$. For a polyhedral norm
like $\|\cdot\| = |\cdot|_\infty$ or $\|\cdot \|=|\cdot|_1$ and a polyhedron $C$, program (\ref{tangent})
reduces to a convex quadratic program, as long as the first-order working models are
polyhedral, or even an LP if $Q(\x)=0$. In the spectral bundle method \cite{ANP:2008,ANP:2009}, where $\phi_k$ contains an infinity of cuts,  (see section \ref{many}), the tangent program
is a convex SDP, and $C$ may then be allowed to be an SDP-constrained set. 
In this case the choice $\|\cdot\| = |\cdot|_2$ is acceptable, as the trust-region can be turned into a conical constraint, so that the tangent program is a convex SDP.

Note that by the necessary optimality conditions
for the tangent program in step 4 of the algorithm, there exists a subgradient 
$\g_k^*\in \partial \left( \phi_k(\cdot,\x)+i_C\right)(\y^k)$ such that $\g_k^*+Q(\y^k-\x)+\vv_k=0$, where $\vv_k$ is in the normal cone
to the trust-region norm ball $B(\x,R_k)$ at $\y^k$, and where $i_C$ is the indicator function of the convex set $C$.

\begin{definition}
\label{aggregate}
{\rm
We call $\g_k^*$ the {\em aggregate subgradient}.
The affine function
$m_k^*(\cdot,\x) = \phi_k(\y^k,\x) + \g_k^{*\ttop} (\cdot - \y^k)$ is called the {\em aggregate plane}.
}
 \hfill $\square$
\end{definition}

Now along with the solution $y^k$  of the tangent program 
we consider a larger set of admissible trial steps $z^k$. Fixing constants $\Theta \geq 1$ and $0 < \theta \ll 1$, we admit as trial point
any $\z^k\in C$ satisfying
\begin{equation}
\label{trial}
f(\x) - \Phi_k(\z^k,\x) \geq \theta \left( f(\x) - \Phi_k(\y^k,\x)  \right)  \mbox{ and }
\|\z^k - \x\| \leq \Theta \|\y^k-\x\|.
\end{equation}

\subsection{Acceptance and building the new working model}
Once a trial point $\z^k$ associated with a solution $\y^k$ of the tangent program
(\ref{tangent}) has been determined as in (\ref{trial}), acceptance is tested by the standard
test
\begin{equation}
\label{gamma}
\rho_k = \frac{f(\x) - f(\z^k)}{f(\x) - \Phi_k(\z^k,\x)} \stackrel{?}{\geq} \gamma
\end{equation}
for fixed $0 < \gamma \ll 1$. If $\rho_k <\gamma$ the trial point $\z^k$ is  rejected (a null step).  Then
we have to keep the inner loop going. Applying rules $(W_1)-(W_4)$ we generate  cutting planes $(a,\g) \in \mathscr O(\z^k,\x)$ at $\z^k$, 
the aggregate plane $(a^*,\g^*)$ at $\y^k$, and add those into the new $\phi_{k+1}(\cdot,\x)$, tapering out the old $\phi_k(\cdot,\x)$ by dropping some of the older cuts
contributing to the aggregate plane. We also keep at least one exactness plane $(a_0,\g_0)\in \mathscr O(\x,\x)$
in  the model $\phi_{k+1}(\cdot,\x)$, which may, or may not be, the one used at counter $k$. 

%Whether or not the trust-region radius is changed at the next trial $k+1$ is decided by
%the secondary test in step 7 of the algorithm.
%It is convenient to represent all planes contributing to the working models
%by pairs $(a,\g)$, where $m(\cdot,\x) = a + \g^\ttop (\cdot - \x)$.  

%\begin{example}
%%In the case of the downshift oracle $\mathscr O^\downarrow$
%the cutting plane at trial point $\z^k$ and serious iterate $\x$ gives
%\[
%\g_k\in \partial f(\z^k), \quad a = t_{\z^k,\g_k}(\x) - s(\z^k,\g_k,\x),
%\]
%while the aggregate plane gives
%\[
%a^*_k= \phi_k(\y^k,\x) + \g^{*\ttop}_k (\x-\y^k), \quad \g^*_k\in \partial_1\phi_k(\y^k,\x).
%\]
%\end{example}

\subsection{Management of the trust-region radius}
Understanding
step 7 of the algorithm is crucial, because here a main difference with the classical trust-region  method occurs.
Namely, in the case of an unsuccessful trial step $\z^k$ in the inner loop at $\x$ we do {\em not} automatically reduce the trust-region  radius. 
We first call the oracle to provide cutting planes at $\z^k$. Then the secondary test 
in step 7 % of the algorithm, i.e.,
\begin{equation}
\label{secondary}
\widetilde{\rho}_k = \frac{f(\x) - \phi_{k+1}(\z^k,\x)}{f(\x) - \Phi_k(\z^k,\x)}  \stackrel{?}{\geq} \widetilde{\gamma}
\end{equation}
(with $0 < \gamma < \widetilde{\gamma} < 1$)
serves to decide whether or not to reduce $R_k$ at the next sweep $k+1$ of the inner loop.  The rationale of this test is as follows:
If $\widetilde{\rho}_k < \widetilde{\gamma}$, then the effect of the cutting plane(s) at $\z^k$ is sensible, i.e., it is reasonable to believe that 
we could have performed better, had we already included this cutting plane in the working model $\phi_k$. 
In that case we keep enriching the working model by cuts and maintain $R_k$ unchanged, being reluctant to reduce $R_k$ prematurely, as this leads to 
unnecessarily small steps.
In the opposite case $\widetilde{\rho}_k \geq \widetilde{\gamma}$ the new cutting plane(s) do(es) not seem
to contribute anything substantial at $\z^k$, and here we reduce the trust-region radius in order to get closer to the current $\x$,
where progress is ultimately possible (due to $0 \not\in \partial f(\x)+N_C(\x))$. 
%{\color{magenta} Note that the unsuccessful case $\widetilde{\rho}_k \geq \widetilde{\gamma}$
%includes those cases, where the {\em cutting plane fails to cut}. }

\begin{remark}
The secondary test appears for the first time in the first non-convex version of the bundle method
\cite{NPR:08}, see also \cite{ANP:2008,ANP:2009}. It can also be used with $\Phi_{k+1}(\z^k,\x)$ instead of $\phi_{k+1}(\z^k,\x)$ in the numerator.
%Form (\ref{secondary}) has the advantage that $\widetilde{\rho}_k$ is larger for sizable $|\y-\x|_Q^2$, which facilitates the
%reduction of the trust region radius in cases where progress is slow, while still giving a convergence certificate.  
\end{remark}

\subsection{Algorithm}
We are now ready to present the bundle trust-region algorithm. (See algorithm \ref{algo1}).
{\small
\begin{algorithm}[ht!]
\caption{Non-smooth trust-region algorithm \label{algo1}}
\begin{algo}
\PARAMETERS $0 < \gamma < \widetilde{\gamma} < 1$, $0 < \gamma < \Gamma \leq 1$, $0 < \theta \ll 1$, $\Theta \geq 1$, $q > 0$.
\STEP{Initialize outer loop} Fix initial iterate $\x^1\in C$ and memory trust-region radius $R_1^\sharp > 0$. Initialize $Q_1 \succeq 0$ with $\|Q_1\|\leq q$.
Put outer loop counter $j=1$.
\STEP{Stopping test} At outer loop counter $j$, stop if $\x^j$ is a critical point of (\ref{program}). Otherwise go to inner loop.
\STEP{Initialize inner loop}  Put inner loop counter $k=1$ and initialize trust-region radius as $R_1 = R_j^\sharp$. Build  first-order working model 
$\phi_1(\cdot,\x^j)$, including at least one exactness plane at $\x^j$. Possibly add recycled planes from previous steps. Build second-order working model
$\Phi_1(\cdot,\x^j) = \phi_1(\cdot,\x^j) + \frac{1}{2}(\cdot-\x^j)^\ttop Q_j (\cdot-\x^j)$.
\STEP{Trial step generation} At inner loop counter $k$ compute solution $\y^k$ of trust-region tangent program \\
\vspace*{.1cm}
\hspace*{4cm}$\mbox{minimize} \quad\; \Phi_k(\y,\x^j)$ \\
\hspace*{4cm}$\mbox{subject to}  \;\; \; \|\y-\x^j\| \leq R_k,\y\in C$\\
\vspace*{.1cm}
Admit any  $\z^k\in C$ satisfying $\|\z^k-\x^j\| \leq \Theta \| \y^k-\x^j\|$ and $f(\x^j)-\Phi_k(\z^k,\x^j) \geq
\theta \left( f(\x^j) - \Phi_k(\y^k,\x^j) \right)$ as trial step. 
\STEP{Acceptance test} If
%\hspace*{3cm}
$$\rho_k =\displaystyle \frac{f(\x^j)-f(\z^k)}{f(\x^j)-\Phi_k(\z^k,\x^j)} \geq \gamma$$ 
put $\x^{j+1}=\z^k$ (serious step), quit inner loop and goto step 8. Otherwise (null step), continue inner loop with step 6.
\STEP{Update working model}
Use aggregation to taper out model $\phi_k(\cdot,\x^j)$. Then call oracle $\mathscr O$ to
generate cutting planes $m_k(\cdot,\x^j)$ at $\z^k$. Include aggregate plane and cutting planes in new first-order working model $\phi_{k+1}(\cdot,\x^j)$.
Generate second-order working model $\Phi_{k+1}(\cdot,\x^j) = \phi_{k+1}(\cdot,\x^j) + \frac{1}{2} (\cdot-\x^j)^\ttop Q_j (\cdot-\x^j)$. 
\STEP{Update trust-region radius} Compute secondary control parameter \\
\hspace*{3cm}$$\widetilde{\rho}_k =\displaystyle \frac{f(\x^j)-\phi_{k+1}(\z^k,\x^j)}{f(\x^j)-\Phi_{k}(\z^k,\x^j)}   $$
and put 
$$R_{k+1}=\bigg\{ {R_k \; \, \,\,\mbox{ if }\; \widetilde{\rho}_k < \widetilde{\gamma} \atop \frac{1}{2}R_k \; \mbox{ if } \;\widetilde{\rho}_k \geq \widetilde{\gamma}}$$
Increase inner loop counter $k$ and go back to step 4.
\STEP{Update memory radius} Store new memory trust-region radius
$$ R_{j+1}^\sharp = \bigg\{ { R_k\, \,\,\; \mbox{ if }\; \rho_k < \Gamma \atop 2 R_k \; \mbox{ if }\; \rho_k \geq \Gamma}$$
Update $Q_j \to Q_{j+1}$ respecting $Q_{j+1} \succeq 0$ and $\|Q_{j+1}\| \leq q$. Increase outer loop counter $j$ and go back to step 2.
\end{algo}
\end{algorithm}
}

\newpage

\subsection{Recycling of planes}
When a new serious iterate $\x^+$ is found, then in the next sweep in step 3 
a new initial first-order working model $\phi_1(\cdot,\x^+)$ has to be generated. Contrary to the convex case,
we cannot keep cuts generated around $\x$ to stay in the model for $\x^+$.  At least in the case of the downshift oracle $\mathscr O^\downarrow$ 
we can recycle them as follows: If $(a,\g)\in \mathscr O^\downarrow(\z,\x)$ is in the model $\phi_k(\cdot,\x)$ at acceptance,
then include $(a^+,\g) \in \mathscr O^\downarrow(\z,\x^+)$ in $\phi_1(\cdot,\x^+)$ where $a^+=f(\z)+\g^\ttop(\x^+-\z) - s(\z,\x^+,\g)$.

\subsection{Practical aspects}
Consider the case where the first-order working model
has the form $\phi_k(\cdot,\x^j) = \max_{i\in I_k} a_i + \g_i^\ttop (\cdot - \x^j)$ for some finite set $I_k$. Suppose the
trust-region norm is the maximum norm, and $C = \{\x: A\x \leq b\}$ is a polyhedron. Then the tangent  program  %at serious iterate $\x^j$ and inner loop instant $k$ 
is the following CQP
\begin{eqnarray}
\label{QPtangent}
\begin{array}{ll}
\mbox{minimize} & t + \frac{1}{2} (\y-\x^j)^\ttop Q_j(\y-\x^j) \\
\mbox{subject to} & a_i + \g_i^\ttop (\y-\x^j) \leq t, \;\; i\in I_k\\
&A\y \leq b \\
&-R_k \leq \y_i - \x_i^j \leq R_k \;\;, i=1,\dots,n
\end{array}
\end{eqnarray}
with decision variable $(t,\y) \in \mathbb R\times \mathbb R^n$, giving rise to the solution $\y^k$ in step 4. If $Q_j=0$, then
the tangent program is even a LP.

\begin{remark}
Axioms $(W_1)$, $(W_4)$ can be further relaxed by allowing so-called {\em anticipated cutting planes}. That means we can call the oracle $\mathscr O(z,x)$  at
points $z$ other than $y^k,z^k$ and include those cutting planes into the working models.   This still gives convergence  and allows to exploit the specific structure of a
problem to speed up acceptance in the inner loop.
\end{remark}

\section{Convergence}
\label{convergence}
In this section we prove  convergence of the trust-region  algorithm toward a critical point $0 \in \partial f(\x^*)+N_C(\x^*)$. We prepare with three technical lemmas in section \ref{sect_est} and
prove termination of the inner loop in section \ref{finiteness}.
During this part of the proof we write $\x = \x^j$ and $Q = Q_j$, as those elements are fixed during the inner loop at 
counter $j$. We write $|\cdot|_Q$ for the seminorm $|\x|_Q^2 = \x^\ttop Q\x$.

\subsection{Three technical lemmas}
\label{sect_est}

\begin{lemma}
\label{est}
There exists  $\sigma > 0$, depending only on the 
%constants $\theta \in (0,1)$ and $M>0$ in algorithm {\rm \ref{algo1}} and on the 
trust-region norm $\|\cdot\|$, such that 
%for every trial point $\z^k$ at inner loop instant $k$ with corresponding
the solution $\y^k$ of the trust-region tangent program in step {\rm 4}, with the corresponding aggregate subgradient $\g^*_k \in \partial \left(\phi_k(\cdot,\x)+i_C\right)(\y^k)$, 
satisfies the estimate
\begin{equation}
\label{kolmogoroff1}
f(\x)-\phi_k(\y^k,\x) \geq \sigma \|\g^*_k + Q(\y^k-\x)\| \| \y^k-\x\|.
\end{equation}
\end{lemma}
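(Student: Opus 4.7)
The plan is to exploit the first-order optimality conditions of the trust-region tangent program together with convexity of the working model, tracking the effect of the trust-region multiplier $\vv_k$ carefully.

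First I would write the optimality condition (already recorded just after Definition~\ref{aggregate}) as
$\g_k^* + Q(\y^k-\x) = -\vv_k$ with $\vv_k \in N_{B(\x,R_k)}(\y^k)$, so that
$\|\g_k^*+Q(\y^k-\x)\| = \|\vv_k\|$. Next, since $\phi_k(\cdot,\x)+i_C$ is convex and $\g_k^*$ is a subgradient at $\y^k$, evaluating the subgradient inequality at $\y=\x\in C$ and using the exactness plane of rule $(W_1)$, which guarantees $\phi_k(\x,\x)=f(\x)$, gives
$$f(\x)-\phi_k(\y^k,\x) \geq \g_k^{*\ttop}(\x-\y^k) = (\y^k-\x)^\ttop Q(\y^k-\x) + \vv_k^\ttop(\y^k-\x).$$
The $Q$-term is nonnegative, so I can discard it and reduce the estimate to
$f(\x)-\phi_k(\y^k,\x) \geq \vv_k^\ttop(\y^k-\x)$.

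The core step is now to handle $\vv_k^\ttop(\y^k-\x)$. If $\vv_k=0$ the inequality (\ref{kolmogoroff1}) is trivial (the right-hand side vanishes and the left-hand side is nonnegative from the basic optimality estimate $f(\x)\geq \Phi_k(\y^k,\x)\geq \phi_k(\y^k,\x)$). Otherwise $\y^k$ lies on the boundary $\|\y^k-\x\|=R_k$ and $\vv_k$ belongs to the normal cone of the norm ball $B(\x,R_k)$ at $\y^k$. The characterization of that cone via the support function of $B(\x,R_k)$ yields
$$\vv_k^\ttop(\y^k-\x) = \max_{\y\in B(\x,R_k)} \vv_k^\ttop(\y-\x) = R_k\,\|\vv_k\|_* = \|\y^k-\x\|\,\|\vv_k\|_*,$$
where $\|\cdot\|_*$ is the dual norm of $\|\cdot\|$.

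Combining the two pieces gives
$f(\x)-\phi_k(\y^k,\x) \geq \|\y^k-\x\|\,\|\vv_k\|_* = \|\y^k-\x\|\,\|\g_k^*+Q(\y^k-\x)\|_*$. Finite-dimensional norm equivalence supplies a constant $\sigma>0$, depending only on the chosen trust-region norm, such that $\|\cdot\|_* \geq \sigma\|\cdot\|$, and inserting this yields (\ref{kolmogoroff1}).

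The only non-routine point is recognizing that the trust-region multiplier's contribution $\vv_k^\ttop(\y^k-\x)$ is exactly the support-function value $R_k\|\vv_k\|_*$ of the norm ball, which is what lets one introduce $\|\y^k-\x\|$ as a factor on the right-hand side. Once this identification is in place, everything else is a straightforward combination of the subgradient inequality, $Q\succeq 0$, and norm equivalence.
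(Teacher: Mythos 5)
Your proof is correct and runs along the same lines as the paper's: use the optimality condition to express $\g_k^*+Q(\y^k-\x)=-\vv_k$, apply the subgradient inequality at $\x$, drop the nonnegative $Q$-term, and lower-bound $\vv_k^\ttop(\y^k-\x)$ by a constant multiple of $\|\vv_k\|\,\|\y^k-\x\|$. The only difference is cosmetic: you recognize $\vv_k^\ttop(\y^k-\x)=R_k\|\vv_k\|_*$ as the support-function value of the trust-region ball and then invoke equivalence of $\|\cdot\|_*$ and $\|\cdot\|$, whereas the paper builds the constant $\sigma=\epsilon^2$ by hand, inserting an explicit test vector $\uu'=\epsilon\vv/|\vv|$ into the normal-cone inequality and routing the comparison through the Euclidean norm; both paths yield the same estimate.
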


\begin{proof}
1)
Let $\|\cdot\|$ be the norm used in the trust-region tangent program, $|\cdot|$ the standard Euclidian norm.
There exists $\epsilon >0$ such that
$|\uu|\leq \epsilon$ implies $\|\uu\|\leq 1$. Now if $\|\uu\|=1$ and if $\vv$ is in the normal cone to
the $\|\cdot\|$-unit ball at $\uu$, we have $\vv^\ttop (\uu-\uu') \geq 0$ for every $\|\uu'\|\leq 1$ by the normal cone criterion. 
Hence $\vv^{\ttop}(\uu-\uu')\geq 0$ for every $|\uu'|\leq \epsilon$ by the above, and using $\uu'=\epsilon \vv  / |\vv|$
that implies $\vv^\ttop \uu \geq \epsilon |\vv|$.

2)
Since $\y^k$ is an optimal solution of (\ref{tangent}), we have
$0=\g_k^*+ Q(\y^k-\x) + \vv_k$ by the optimality condition, 
where $\g_k^*\in \partial\left( \phi_k(\cdot,\x)+i_C \right)(\y^k)$ is the aggregate subgradient  and  $\vv_k$ a normal vector to
the $\|\cdot\|$-norm ball  $B(\x,R_k)$ at $\y^k$. 
By the subgradient inequality, using $\x,\y^k\in C$, we have
\[
\g_k^{*\ttop} (\x-\y^k) \leq \phi_k(\x,\x)-\phi_k(\y^k,\x) = f(\x)-\phi_k(\y^k,\x).
\]
Now by part 1), on putting ${\uu}_k = (\y^k-\x)/\|\y^k-\x\|$, we have
$\vv_k^\ttop \uu_k \geq \epsilon |\vv_k|$ independently of  $k$,
because $\vv_k$, being normal to the $\|\cdot\|$-ball of radius $\|\y^k-\x\|$ and center $0$ at $\y^k-\x$, 
is also normal to the $\|\cdot\|$-unit ball at $\uu_k$. 
 But then
$\g_k^{*\ttop} (\x-\y^k)=\vv_k^\ttop (\y^k-\x) + (\x-\y^k)^\ttop Q(\x-\y^k) \geq 
\vv_k^\ttop(\y^k-\x) \geq
\epsilon |\vv_k| \|\y^k-\x\| \geq \epsilon^2 \|\vv_k\| \|\y^k-\x\|
=\epsilon^2 \|\g_k^*+Q(\y^k-\x)\|\|\y^k-\x\|$, where we have used part 1). 
%Invoking (\ref{trial}) for the trial point $\z^k$, and using $\|\x-\z^k\| \leq M\|\x-\y^k\|$, we get
%(\ref{kolmogoroff}) with $\sigma = \epsilon^2\theta M^{-1}$. {\color{red} Vorsicht, die Bed. (4) wurde verändert denn $\Phi_k$ wird benutzt, nicht $\phi_k$. !!!!!
%Man könnte das estimate direkt für $f(\x) - \Phi_k(\z^k,\x)$.....}
This proves the result with $\sigma = \epsilon^2$.
%\hfill $\square$
\end{proof}

\begin{lemma}
There exists constants $\sigma' >0, \sigma''\geq 0$, depending only on the trust region norm $\|\cdot\|$ and the parameters 
$\theta$, $\Theta$, $q$ used in the algorithm, such that
the trial points $\z^k$ in step {\rm 4} of the algorithm, associated with the solutions $\y^k$ of the tangent program {\rm (\ref{tangent})} 
and aggregate subgradients $\g_k^*\in \partial \left( \phi_k(\cdot,\x) + i_C  \right)(\y^k)$,
satisfy the estimate
\begin{equation}
\label{11}
f(\x) - \Phi_k(\z^k,\x) \geq \left(\sigma' \|\g_k^*\| - \sigma'' \|\y^k-\x\| \right) \|\y^k - \x\|.
\end{equation}
\end{lemma}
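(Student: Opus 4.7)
The plan is to combine the Kolmogorov-type estimate of Lemma~\ref{est} with two reverse triangle bounds on the $Q$-terms and then propagate everything through the two defining conditions of the trial step~(\ref{trial}). Specifically, recall that by Lemma~\ref{est},
\[
f(\x)-\phi_k(\y^k,\x) \geq \sigma \|\g_k^* + Q(\y^k-\x)\|\,\|\y^k-\x\|.
\]
Since $\|Q\|\leq q$ (with respect to some fixed norm on $\mathbb R^n$, and with all norms equivalent, there is a constant $c>0$ depending only on $\|\cdot\|$ such that $\|Q\uu\|\leq cq\|\uu\|$ for all $\uu$), the triangle inequality yields
\[
\|\g_k^* + Q(\y^k-\x)\|\;\geq\;\|\g_k^*\|-cq\,\|\y^k-\x\|.
\]
Substituting this into the Lemma~\ref{est} bound gives
\[
f(\x)-\phi_k(\y^k,\x)\;\geq\;\sigma\|\g_k^*\|\,\|\y^k-\x\|\;-\;\sigma c q\,\|\y^k-\x\|^2.
\]

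Next I would handle the quadratic term defining $\Phi_k$. Using again the equivalence of norms, there is a constant $c'>0$ (depending only on $\|\cdot\|$) such that
\[
\tfrac12|\y^k-\x|_Q^2\;\leq\;\tfrac12 q\,c'\,\|\y^k-\x\|^2,
\]
so that
\[
f(\x)-\Phi_k(\y^k,\x)=f(\x)-\phi_k(\y^k,\x)-\tfrac12|\y^k-\x|_Q^2\;\geq\;\sigma\|\g_k^*\|\,\|\y^k-\x\|-C\,\|\y^k-\x\|^2,
\]
with $C=\sigma c q+\tfrac12 qc'$ depending only on $\|\cdot\|$ and $q$.

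Finally, I would invoke the first defining inequality of a trial step in (\ref{trial}), namely $f(\x)-\Phi_k(\z^k,\x)\geq \theta\bigl(f(\x)-\Phi_k(\y^k,\x)\bigr)$, to obtain
\[
f(\x)-\Phi_k(\z^k,\x)\;\geq\;\theta\sigma\|\g_k^*\|\,\|\y^k-\x\|\;-\;\theta C\,\|\y^k-\x\|^2,
\]
which is the asserted estimate with $\sigma'=\theta\sigma>0$ and $\sigma''=\theta C\geq 0$; note that $\Theta$ does not actually enter the constants here, only $\theta$, $q$, and the trust-region norm. The only delicate point is keeping the norm-equivalence constants separate from $\sigma$ and absorbing them into $\sigma''$; since we work with an arbitrary trust-region norm while Lemma~\ref{est} is formulated in the same norm, this book-keeping is routine once one notes that $\|Q\|\leq q$ and $|\cdot|_Q^2\leq q|\cdot|^2\leq qc'\|\cdot\|^2$ for the appropriate norm-equivalence constant.
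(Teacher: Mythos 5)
Your argument is correct and is essentially the paper's own proof: start from the Kolmogorov-type estimate of Lemma~\ref{est}, apply the reverse triangle inequality to $\|\g_k^*+Q(\y^k-\x)\|$, bound the $Q$-quadratic term by $\|Q\|\leq q$ (with norm-equivalence constants absorbed into $\sigma''$), and finally multiply through by $\theta$ via the first inequality in (\ref{trial}). You do the triangle-inequality step before subtracting the quadratic term whereas the paper does these in the opposite order, but this is inconsequential arithmetic rearrangement; your observation that $\Theta$ never actually enters the constants is also correct.
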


\begin{proof}
Subtracting $\frac{1}{2}|\y^k-\x|_Q^2$ from both sides of (\ref{kolmogoroff1}) and using
$|\y^k-\x|_Q^2 \leq \|Q\|\|\y^k-\x\|^2$ gives
\[
f(\x) - \Phi_k(\y^k,\x) \geq  \left(  \epsilon^2 \|\g_k^*+Q(\y^k-\x)\| -\textstyle \frac{1}{2}\|Q\| \|\y^k -\x\| \right) \|\y^k-\x\|.
\]
Hence by (\ref{trial}), the triangle inequality, and $\|Q\|\leq q$ in step 8 of the algorithm, we have
\begin{align}
\label{12}
f(\x) - \Phi_k(\z^k,\x)& \geq  \theta \left(  \epsilon^2 \|\g_k^*+Q(\y^k-\x)\| -\textstyle \frac{1}{2}\|Q\| \|\y^k -\x\| \right) \|\y^k-\x\|   \notag \\ \notag
&\geq \theta \epsilon^2 \|\g_k^*\| \|\y^k-\x\| - (\textstyle\frac{1}{2}-\theta\epsilon^2)\|Q\|  \|\y^k-\x\|^2\\
&\geq \left( \theta \epsilon^2 \|\g_k^*\|  - (\textstyle\frac{1}{2}-\theta\epsilon^2) q  \|\y^k-\x\| \right)\|\y^k-\x\|. \end{align}
   This is (\ref{11}) with $\sigma'=\theta\epsilon^2$ and $\sigma''= \max\{0,(\frac{1}{2}-\theta\epsilon^2)q\}$. 
   %\hfill $\square$
\end{proof}

\begin{lemma}
\label{est3}
Suppose $\Delta_k=\|\y^k-\x\| / \|\g_k^*\| \to 0$ as $k \to \infty$. Then there exists a constant $\sigma > 0$, depending only
on $\theta,\Theta,q$ and the trust region norm $\|\cdot\|$, such that from some counter $k_0$ onward,
\begin{equation}
\label{kolmogoroff}
f(\x) - \Phi_k(\z^k,\x) \geq \sigma \|\g_k^*\| \|\z^k-\x\|.
\end{equation}
The counter $k_0$ can be chosen smallest with the property that
$\Delta_k < \frac{1}{2}\theta\epsilon^2/(\frac{1}{2}-\theta\epsilon^2)q$ for all $k \geq k_0$.
\end{lemma}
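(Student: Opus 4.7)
The plan is to bootstrap directly from estimate~(\ref{11}) of the preceding lemma, using the hypothesis that $\Delta_k\to 0$ to dominate the quadratic perturbation term $\sigma''\|\y^k-\x\|^2$ by the principal term $\sigma'\|\g_k^*\|\|\y^k-\x\|$, and then to convert $\|\y^k-\x\|$ into $\|\z^k-\x\|$ via the trial step bound from~(\ref{trial}).

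First I would recall that~(\ref{11}) gives
\[
f(\x) - \Phi_k(\z^k,\x) \geq \bigl( \sigma' \|\g_k^*\| - \sigma'' \|\y^k-\x\| \bigr)\|\y^k-\x\|
\]
with $\sigma' = \theta\epsilon^2$ and $\sigma'' = \max\{0,(\tfrac12 - \theta\epsilon^2)q\}$. If $\sigma''=0$ the inequality already reads $f(\x)-\Phi_k(\z^k,\x) \geq \sigma'\|\g_k^*\|\|\y^k-\x\|$, which is stronger than needed, so we may assume $\sigma'' = (\tfrac12-\theta\epsilon^2)q > 0$.

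Next, I invoke the hypothesis $\Delta_k \to 0$: choose $k_0$ to be the smallest index such that
\[
\Delta_k = \frac{\|\y^k-\x\|}{\|\g_k^*\|} < \frac{\sigma'}{2\sigma''} = \frac{\tfrac12\,\theta\epsilon^2}{(\tfrac12-\theta\epsilon^2)q} \qquad \text{for all } k \geq k_0.
\]
For such $k$, rewriting $\sigma''\|\y^k-\x\| = \sigma''\Delta_k\|\g_k^*\| < \tfrac12\sigma'\|\g_k^*\|$ and substituting into~(\ref{11}) yields
\[
f(\x) - \Phi_k(\z^k,\x) \geq \tfrac12\sigma'\|\g_k^*\|\,\|\y^k-\x\|.
\]

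Finally, since $\z^k$ is an admissible trial step, the second half of~(\ref{trial}) gives $\|\z^k-\x\|\leq \Theta\|\y^k-\x\|$, so $\|\y^k-\x\|\geq \Theta^{-1}\|\z^k-\x\|$, and substitution produces the claim with $\sigma = \theta\epsilon^2/(2\Theta)$, which depends only on $\theta$, $\Theta$, $q$ and the trust-region norm through $\epsilon$. There is no genuine obstacle here; the main care required is to keep the constants straight and to note that the explicit threshold stated in the lemma is exactly what is needed to absorb the $\sigma''$-term at half strength, which is why $\sigma'/(2\sigma'')$ appears rather than $\sigma'/\sigma''$.
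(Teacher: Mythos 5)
Your proof is correct and follows essentially the same route as the paper's: split on whether $\sigma''=0$, use $\Delta_k\to 0$ to absorb the quadratic term at half strength (yielding the same threshold $\tfrac12\theta\epsilon^2/(\tfrac12-\theta\epsilon^2)q$), and then convert $\|\y^k-\x\|$ to $\|\z^k-\x\|$ via the trial-step bound $\|\z^k-\x\|\leq\Theta\|\y^k-\x\|$. The resulting constant $\sigma=\tfrac12\theta\epsilon^2\Theta^{-1}$ matches the paper's.
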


\begin{proof}
The estimate is obvious from the previous Lemma if $\frac{1}{2} \leq \theta \epsilon^2$, as then $\sigma''=0$ in (\ref{11}). Otherwise, since
$\|\y^k-\x\|/\|\g_k^*\| \to 0$, there exists an index $k_0$ such that
\begin{equation}
\label{13}\Delta_k=\frac{\|\y^k-\x\|}{ \|\g_k^*\|}  < \frac{\frac{1}{2}\theta\epsilon^2 }{\left( \frac{1}{2}-\theta\epsilon^2  \right)q}\end{equation}
for all $k \geq k_0$.
Then from (\ref{12})
\begin{align*}
f(\x) - \Phi_k(\z^k,\x) &\geq  \left( \theta \epsilon^2 \|\g_k^*\|  - (\textstyle\frac{1}{2}-\theta\epsilon^2) q  \|\y^k-\x\| \right)\|\y^k-\x\| \\
&\geq \textstyle\frac{1}{2} \theta\epsilon^2 \|\g_k^*\| \|\y^k-\x\| \qquad\qquad\qquad \mbox{  (using (\ref{13}))}\\
&\geq  \textstyle\frac{1}{2}\theta \epsilon^2 \Theta^{-1} \|\g_k^*\| \|\z^k-\x\|  \quad\qquad\quad\, \mbox{  (using (\ref{trial}))}
\end{align*}
for all $k \geq k_0$. This is the requested estimate with $\sigma = \frac{1}{2}\theta\epsilon^2\Theta^{-1}$.
%\hfill $\square$
\end{proof}

\subsection{Finite termination of the inner loop}
\label{finiteness}
We now investigate whether the inner loop at the current serious iterate $\x$ can find a new
serious iterate $\x^+$ satisfying the acceptance condition (\ref{gamma}), or whether it fails and turns infinitely.
The hypothesis for the inner loop is that the cutting plane oracle satisfies $(\mathscr O_1)-(\mathscr O_4)$. Strictness $(\widehat{\mathscr O}_3)$  of $\mathscr O$
is not needed for termination of the inner loop.

\begin{lemma}
\label{short}
Suppose the inner loop at serious iterate $\x$ turns infinitely with $\displaystyle\liminf_{k\to \infty} R_k=0$. Then $\x$ is a critical point of {\rm (\ref{program})},
i.e., satisfies $0 \in \partial f(\x) + N_C(\x)$.
\end{lemma}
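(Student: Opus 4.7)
The plan is to exploit infinite halving of the trust-region radius along a subsequence to force the aggregate subgradient to vanish, then decompose it into contributions from the cutting planes and from $C$, identifying the respective limits via the oracle axioms. Since $R_k$ is non-increasing in the inner loop, the hypothesis $\liminf R_k=0$ forces $R_k\to 0$, so by the trust-region constraint $\y^k\to \x$ and by (\ref{trial}) also $\z^k\to \x$; moreover, halving happens infinitely often, on a set $K$ of indices with $\widetilde{\rho}_k\geq \widetilde{\gamma}$. The degenerate case $\y^k=\x$ I would dispatch first: the tangent-program optimality condition and $(M_1)$ then yield $0\in \partial_1\phi_k(\x,\x)+N_C(\x)\subset \partial f(\x)+N_C(\x)$ directly, so henceforth I assume $\y^k\neq \x$.

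For $k\in K$, I would combine the null-step failure $\rho_k<\gamma$ and the secondary-test success $\widetilde{\rho}_k\geq \widetilde{\gamma}$ to obtain
\[
f(\z^k)-\phi_{k+1}(\z^k,\x) > (\widetilde{\gamma}-\gamma)\alpha_k, \qquad \alpha_k:=f(\x)-\Phi_k(\z^k,\x).
\]
Axiom $(\mathscr O_3)$ applied to the null sequence $\z^k\to \x$ produces a cutting plane in $\mathscr O(\z^k,\x)$ which by rule $(W_3)$ is included in $\phi_{k+1}$, yielding the reverse bound $f(\z^k)-\phi_{k+1}(\z^k,\x)\leq \epsilon_k\|\z^k-\x\|$ with $\epsilon_k\to 0^+$. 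Together with $\|\z^k-\x\|\leq \Theta\|\y^k-\x\|$ this forces $\alpha_k = o(\|\y^k-\x\|)$ on $K$. Feeding this into the trial-step bound $\alpha_k \geq \theta(f(\x)-\Phi_k(\y^k,\x))$ and invoking Lemma~\ref{est} (together with the routine estimate $|\y^k-\x|_Q^2 \leq q\|\y^k-\x\|^2$ coming from $\|Q\|\leq q$ in step~8) then delivers $\g_k^*\to 0$ along $K$.

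To close the argument I would decompose $\g_k^* = \g_k^\phi + \g_k^C$ with $\g_k^\phi\in \partial_1\phi_k(\y^k,\x)$ and $\g_k^C\in N_C(\y^k)$ (via Moreau--Rockafellar, since $\phi_k$ is finite-valued convex), and expand $\g_k^\phi = \sum_{i=1}^{n+1}\lambda_i^{(k)}\g_i^{(k)}$ by Carath\'eodory with cutting planes $(a_i^{(k)},\g_i^{(k)}) \in \mathscr O(\z^{(i,k)},\x)$ active at $\y^k$ and $\|\z^{(i,k)}-\x\|\leq M$. Boundedness of $\mathscr O$ on bounded sets bounds the $\g_i^{(k)}$ uniformly; the activity identity $a_i^{(k)} + \g_i^{(k)\ttop}(\y^k-\x) = \phi_k(\y^k,\x)$ combined with $\phi_k(\y^k,\x)\to f(\x)$ (a by-product of the previous paragraph) and $\y^k-\x\to 0$ forces $a_i^{(k)}\to f(\x)$. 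Axiom $(\mathscr O_2)$ then places every accumulation point of $\g_i^{(k)}$ in $\partial f(\x)$, so a subsequence extraction yields $\g_k^\phi \to \g^* \in \partial f(\x)$ by convexity of $\partial f(\x)$; the residual $\g_k^C = \g_k^*-\g_k^\phi \to -\g^*$ lies in $N_C(\x)$ by upper semi-continuity of $N_C$ at $\x$, and $0=\g^*+(-\g^*) \in \partial f(\x)+N_C(\x)$.

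The hardest step will be the coupling of $(\mathscr O_3)$ with $(W_3)$ in the second paragraph: only because the axiom-certified cut is guaranteed to enter the very next working model does the gap $f(\z^k)-\phi_{k+1}(\z^k,\x)$ collapse at the rate $o(\|\z^k-\x\|)$, and without this rate the halving events would transmit no information about criticality at $\x$. The final application of $(\mathscr O_2)$ is equally delicate, because it requires convergence of the offsets $a_i^{(k)}$ of the \emph{active} cuts, which depends on the full chain $\alpha_k\to 0 \Rightarrow \Phi_k(\y^k,\x)\to f(\x)\Rightarrow \phi_k(\y^k,\x)\to f(\x)$ established above.
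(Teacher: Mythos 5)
Your proof follows the same skeleton as the paper's — force $\g_k^*\to 0$ along the subsequence of secondary-test events, then split $\g_k^*$ into a subdifferential and a normal-cone part and pass to the limit — and your route to $\g_k^*\to 0$ is, if anything, cleaner than the paper's: rather than assuming $\|\g_k^*\|\geq\eta>0$ and deriving a contradiction with $\widetilde{\rho}_k\geq\widetilde\gamma$ as the paper does, you extract $\alpha_k=o(\|\y^k-\x\|)$ directly from the combination $\rho_k<\gamma$, $\widetilde\rho_k\geq\widetilde\gamma$, $(\mathscr O_3)$ and $(W_3)$, and then feed this into Lemma~\ref{est} together with $\|Q_j\|\leq q$. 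That is a genuinely slicker deduction of the same intermediate fact.

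However, the final identification of the limit has a gap. You expand $\g_k^\phi\in\partial_1\phi_k(\y^k,\x)$ by Carath\'eodory into a convex combination of slopes $\g_i^{(k)}$ with $(a_i^{(k)},\g_i^{(k)})\in\mathscr O(\z^{(i,k)},\x)$, which presupposes that every plane active in $\phi_k$ at $\y^k$ was generated by the oracle. That is false: by rule $(W_2)$ the working model $\phi_k$ also contains the aggregate plane $m_{k-1}^*(\cdot,\x)$, whose slope is $\g_{k-1}^*\in\partial\bigl(\phi_{k-1}(\cdot,\x)+i_C\bigr)(\y^{k-1})$. When $C\neq\mathbb R^n$ this aggregate slope carries a normal-cone component $\q_{k-1}\in N_C(\y^{k-1})$ and is not of the form $\g$ with $(a,\g)\in\mathscr O(\z,\x)$; even when $C=\mathbb R^n$ it is a recursively nested convex combination whose pedigree you would have to flatten carefully. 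If the aggregate plane is active at $\y^k$ — and you cannot exclude this — your Carath\'eodory decomposition and the subsequent appeal to $(\mathscr O_2)$ do not go through. The paper sidesteps the issue entirely by never opening up $\p_k\in\partial_1\phi_k(\y^k,\x)$: it uses the domination $\phi_k\leq\phi$ by the envelope model, the subgradient inequality $\p_k^\ttop\h\leq\phi(\y^k+\h,\x)-\phi_k(\y^k,\x)$, the just-established limit $\phi_k(\y^k,\x)\to f(\x)=\phi(\x,\x)$, and axioms $(M_3)$ and $(M_1)$ for the model $\phi$ to conclude $\p\in\partial_1\phi(\x,\x)\subset\partial f(\x)$. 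You should replace your Carath\'eodory step with this envelope-model argument (or otherwise prove that the aggregate slope contributes nothing in the limit, which is not immediate).
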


\begin{proof}
According to step 7 of  the algorithm 
we have $\widetilde{\rho}_k \geq \widetilde{\gamma}$ for infinitely many $k\in \mathcal K$. Since $R_k$ is never increased during the inner loop, that implies
$R_k\to 0$. Hence $\y^k,\z^k\to \x$ as $k\to \infty$, where we use the trial step generation rule (\ref{trial}) in step 4 of the algorithm. We argue that this implies $\phi_k(\z^k,\x) \to f(\x)$.

Indeed,
$\limsup_{k\to\infty} \phi_k(\z^k,\x) \leq \limsup_{k\to \infty} \phi(\z^k,\x) = \lim_{k\to\infty} \phi(\z^k,\x)=f(\x)$ is always true due to
$\phi_k\leq \phi$ and axiom $(M_1)$, and where $\phi$ is the envelope model of the
cutting plane oracle $\mathscr O$. On the other hand, $\phi_k$ includes (i.e. dominates) an exactness plane $m_0(\cdot,\x)=f(\x) + \g_0^\top(\cdot-\x)$ by rule $(W_1)$, hence  
$f(\x) = \lim_{k\to\infty} m_0(\z^k,\x) \leq \liminf_{k\to\infty} \phi_k(\z^k,\x)$. These two together show $\phi_k(\z^k,\x)\to f(\x)$, and then immediately also 
$\Phi_k(\z^k,\x)\to f(\x)$. We also readily obtain $\phi_k(\y^k,\x) \to f(\x)$ from the link (\ref{trial}) between $\z^k,\y^k$ in step 4 of the algorithm.

We now prove that $\liminf_{k\to \infty}\| \g_k^*\|= 0$,  where $\g_k^*$ are the aggregate subgradients (definition \ref{aggregate}) at the $\y^k$.
Assume on the contrary that $\| \g_k^*\|\geq \eta > 0$ for all $k$. Since $\y^k\to \x$, by (\ref{kolmogoroff}) there exists a constant $\sigma > 0$
such that
for $k$ large enough,
\begin{equation}
\label{denom}
f(\x)-\Phi_k(\z^k,\x) \geq %\textstyle\frac{1}{4}
\sigma\eta \| \z^k-\x\|.
\end{equation}
%as the quadratic term in $\Phi_k$
%is of the order $\| \z^k-\x\|^2$ and becomes negligible before $\|\z^k-\x\|$. 

Next observe that
since by rule $(W_3)$ all cutting planes at $\z^k$ are integrated in the new model $\phi_{k+1}(\cdot,\x)$, we have $\phi_{k+1}(\z^k,\x) \geq m_k(\z^k,\x)
= a_k + \g_k^\ttop (\z_k-\x)$, where $(a_k,\g_k)\in \mathscr O(\z^k,\x)$. But by  $(\mathscr O_3)$ there exist $\epsilon_k\to 0^+$ and some such plane $m_k(\cdot,\x)$ 
such that $f(\z^k) \leq m_k(\z^k,\x) + \epsilon_k \|\z^k-\x\|$. Hence 
\begin{equation}
\label{num}
f(\z^k) \leq m_{k}(\z^k,\x) + \epsilon_k\|\z^k-\x\| \leq \phi_{k+1}(\z^k,\x) + \epsilon_k \|\z^k - \x\|.
\end{equation}
Now using (\ref{denom}) and (\ref{num})  we estimate
\[
\widetilde{\rho}_k = \rho_k + \frac{f(\z^k)- \phi_{k+1}(\z^k,\x)}{f(\x)-\Phi_k(\z^k,\x)}
\leq \rho_k + \frac{{\epsilon}_k \| \z^k-\x\| }{  \sigma \eta \| \z^k-\x \|} = \rho_k+ {\epsilon}_k/(\sigma\eta).
\]
Since ${\epsilon}_k \to 0$ and $\rho_k < \gamma$, we have $\limsup \widetilde{\rho}_k \leq \gamma < \widetilde{\gamma}$,
a contradiction with $\widetilde{\rho}_k > \widetilde{\gamma}$ for the infinitely many $k\in \mathcal K$. That proves $\g_k^*\to 0$
for a subsequence $k\in \mathcal N\subset \mathcal K$. 

%Select $k\in \mathcal K$ such that $\g_k^*\to 0$. 
Write $\g_k^*=\p_k+\q_k$
with $\p_k\in \partial_1 \phi_k(\y^k,\x)$ and $\q_k\in N_C(\y^k)$. Using boundedness of the $\y^k$,
and hence boundedness of the $\p_k$, we
extract another subsequence $k\in \mathcal K'\subset \mathcal N$ such that $\p_k\to \p$, $\q_k\to \q$, $\p+\q=0$. Since $\y^k\to \x$,
we have $\q\in N_C(\x)$. We argue that $\p\in \partial f(\x)$. Indeed,
for any test vector $\h$ the subgradient inequality gives
\[
\p_k^\ttop   \h\leq \phi_k(\y^k+\h,\x)-\phi_k(\y^k,\x) \leq
\phi(\y^k+\h,\x)-\phi_k(\y^k,\x).
\]
Since $\phi_k(\y^k,\x)\to f(\x)=\phi(\x,\x)$, passing to the limit using $\y^k\to \x$ gives
\[
\p^\ttop \h\leq \phi(\x+\h,\x)-\phi(\x,\x),
\]
proving $\p\in\partial_1 \phi(\x,\x)\subset \partial f(\x)$ by axiom $(M_1)$. 
This proves $0 = \p + \q \in \partial f(\x) + N_C(\x)$, 
hence $\x$ is a critical point of (\ref{program}).
%
%\hfill $\square$
\end{proof}

\begin{remark}
Kiwiel's aggregation rule \cite{kiwiel_aggregation} for the convex bundle method allows to limit
the bundle to any fixed number $\geq 3$ planes. In \cite{NPR:08} we have shown that the rule carries over
to our non-convex bundle method, see also \cite{cutting}.  
Ruszczy\'nski  \cite{rus} had asked whether  aggregation  could also be used in the convex trust region method. 
In \cite{ANR:18} we had justified the rule for the non-convex non-smooth trust region method
under the hypothesis that $Q \succ 0$. Here we shall show that this hypothesis cannot be removed. 
\end{remark}

The following result justifies the use of aggregation in the first place for the special case $\z^k=\y^k$ and $Q \succ 0$.  Note that 
the trivial choice $\z^k=\y^k$ in step 4 is always authorized (due to $\Theta \geq 1$ and $\theta \leq 1$ in  rule (\ref{trial})), but
of course we want to use the additional freedom offered by $\z^k$ to improve performance of our method, so $\z^k=\y^k$
is rather restrictive, and we will remove it later.

\begin{lemma}
\label{long}
Suppose $Q \succ 0$, the inner loop at $\x$ turns infinitely, and the trust-region radius $R_k$ stays bounded away from $0$.
Suppose the $\y^k$ are chosen as trial steps.
Then $\x$ is
a critical point of {\rm (\ref{program})}.
\end{lemma}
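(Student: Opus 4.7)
My plan is to exploit the strong convexity coming from $Q \succ 0$ to show that the tangent-program iterates $\y^k$ can only cluster at $\x$, so that the rest of the argument reduces to the pattern of Lemma \ref{short}. Since $R_k$ is non-increasing and $\liminf_k R_k > 0$, there exist $k_0$ and $R > 0$ with $R_k = R$ for every $k \geq k_0$, hence $\widetilde{\rho}_k < \widetilde{\gamma}$ for all such $k$. By rule $(W_2)$ the aggregate plane $m_k^*(\cdot,\x)$ is included in $\phi_{k+1}$, so
\[
\Phi_{k+1}(\cdot,\x) \geq m_k^*(\cdot,\x) + \tfrac{1}{2}|\cdot - \x|_Q^2,
\]
whose right-hand side is strongly convex with modulus $\mu := \lambda_{\min}(Q) > 0$ and, by the optimality condition for the $k$-th tangent program, attains its minimum over $C \cap B(\x, R)$ at $\y^k$ with value $\Phi_k(\y^k,\x)$. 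Evaluating at the feasible point $\y^{k+1}$ gives
\[
\Phi_{k+1}(\y^{k+1},\x) \geq \Phi_k(\y^k,\x) + \tfrac{\mu}{2}\|\y^{k+1}-\y^k\|^2.
\]
Since $\phi_k$ contains an exactness plane, $\Phi_k(\y^k,\x) \leq \Phi_k(\x,\x) = f(\x)$, so $\Phi_k(\y^k,\x) \uparrow \Phi^* \leq f(\x)$ and $\sum_k \|\y^{k+1}-\y^k\|^2 < \infty$; in particular $\|\y^{k+1}-\y^k\| \to 0$.

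Now I pass to a limit. Since $\y^k \in B(\x,R)$ is bounded, extract $\y^{k_\ell} \to \y^*$; then $\y^{k_\ell+1} \to \y^*$ as well. The working models are dominated by the envelope $\phi(\cdot,\x)$ and hence uniformly Lipschitz on a neighborhood of $\y^*$, so
\[
\phi_{k_\ell+1}(\y^{k_\ell},\x) \to \Phi^* - \tfrac{1}{2}|\y^*-\x|_Q^2.
\]
Rewriting $\widetilde{\rho}_{k_\ell} < \widetilde{\gamma}$ as $\phi_{k_\ell+1}(\y^{k_\ell},\x) \geq (1-\widetilde{\gamma})f(\x) + \widetilde{\gamma}\Phi_{k_\ell}(\y^{k_\ell},\x)$ and taking $\liminf$ yields
\[
(1-\widetilde{\gamma})(\Phi^* - f(\x)) \geq \tfrac{1}{2}|\y^*-\x|_Q^2.
\]
Combined with $\Phi^* \leq f(\x)$ and $\widetilde{\gamma} < 1$, this forces $\Phi^* = f(\x)$ and $|\y^*-\x|_Q = 0$; hence $\y^* = \x$ because $Q \succ 0$. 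As every subsequential limit of $\y^k$ equals $\x$, the whole sequence satisfies $\y^k \to \x$ and $\Phi_k(\y^k,\x) \to f(\x)$.

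From here I conclude as in Lemma \ref{short}. For $k$ large the constraint $\|\y^k-\x\|\leq R$ is inactive, so $\vv_k = 0$ and the aggregate subgradient obeys $\g_k^* = -Q(\y^k-\x) \to 0$. Writing $\g_k^* = \p_k + \q_k$ with $\p_k \in \partial_1 \phi_k(\y^k,\x)$ and $\q_k \in N_C(\y^k)$, extracting $\p_k \to \p$ and $\q_k \to \q$ with $\p+\q=0$, the subgradient inequality $\p_k^\ttop \h \leq \phi(\y^k+\h,\x) - \phi_k(\y^k,\x)$ combined with $\phi_k(\y^k,\x) \to f(\x) = \phi(\x,\x)$ gives $\p \in \partial_1 \phi(\x,\x) \subset \partial f(\x)$ by $(M_1)$, while $\q \in N_C(\x)$ because $\y^k \to \x$. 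Therefore $0 \in \partial f(\x) + N_C(\x)$. The main obstacle is the limit argument for $\y^* = \x$: strong convexity via $(W_2)$ and the secondary test jointly squeeze $|\y^*-\x|_Q^2$ to zero, and without $Q \succ 0$ the seminorm $|\cdot|_Q$ would no longer separate points, which heuristically explains why the hypothesis cannot be removed.
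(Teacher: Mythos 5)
Your proof is correct, and in its middle section it takes a genuinely different route from the paper's. You begin the same way: rule $(W_2)$ plus the optimality condition at $\y^k$ and $Q\succ 0$ give $\Phi_{k+1}(\y^{k+1},\x)\geq\Phi_k(\y^k,\x)+\tfrac{\mu}{2}\|\y^{k+1}-\y^k\|^2$, hence $\Phi_k(\y^k,\x)\uparrow\Phi^*\leq f(\x)$ and $\y^{k+1}-\y^k\to 0$. From there the paper does \emph{not} argue via a convergent subsequence of the $\y^k$; instead it observes that, since $\widetilde\rho_k<\widetilde\gamma<1$, one of the oracle cuts from $\mathscr O(\y^k,\x)$ must be an active support plane of $\phi_{k+1}(\cdot,\x)$ at $\y^k$ (otherwise $\widetilde\rho_k\geq 1$), uses the boundedness of that oracle gradient $\g_k$ to deduce $\phi_{k+1}(\y^k,\x)-\phi_k(\y^k,\x)\to 0$, and then gets $\phi_k(\y^k,\x)\to f(\x)$ by a separate contradiction argument. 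You instead extract $\y^{k_\ell}\to\y^*$, invoke uniform Lipschitz continuity of the working models (which indeed holds: they are uniformly bounded above by the envelope $\phi(\cdot,\x)$ and below by an exactness plane whose slope lies in $\partial f(\x)$, so the standard convexity estimate applies), and compute $\lim_\ell\phi_{k_\ell+1}(\y^{k_\ell},\x)=\Phi^*-\tfrac12|\y^*-\x|_Q^2$ directly; feeding this into $\widetilde\rho_{k_\ell}<\widetilde\gamma$ yields $(1-\widetilde\gamma)(\Phi^*-f(\x))\geq\tfrac12|\y^*-\x|_Q^2$, forcing $\Phi^*=f(\x)$ and $\y^*=\x$ in a single step. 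Your conclusion (decompose the aggregate subgradient $\g_k^*=\p_k+\q_k\to 0$ with $\p\in\partial_1\phi(\x,\x)\subset\partial f(\x)$ and $\q\in N_C(\x)$) mirrors the end of Lemma \ref{short}; the paper's own conclusion differs slightly and argues that $\x$ minimizes the second-order envelope $\Phi(\cdot,\x)=\phi(\cdot,\x)+\tfrac12|\cdot-\x|_Q^2$ over $C$. What your approach buys is brevity and the complete avoidance of the oracle in the body of the lemma; what it costs is the appeal to the uniform Lipschitz estimate for the $\phi_k$, which the paper sidesteps by exploiting only the boundedness of the oracle gradient $\g_k$ rather than that of all subgradients of the working models.
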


\begin{proof}
Since the trust-region radius is frozen $R_k=R_{k_0}$ from some counter $k_0$ onwards, we write $R := R_{k_0}$.
According to step 7 of the algorithm that means $\widetilde{\rho}_k < \widetilde{\gamma}$ for $k\geq k_0$. 
The only progress in the working model
as we update $\phi_k \to \phi_{k+1}$
is now the addition of the cutting plane(s) and the aggregate plane.  The working models $\phi_k$ now contain at least three planes,
an exactness plane, at least one cut from the last unsuccessful trial step, and the aggregate plane. They may  contain more planes, 
but those will not be used in our argument below.

%We want to prove $\y^k\to \x$. 
Since $R_k$ stays bounded away
from 0, 
%this is more involved than in the previous Lemma.
it is not a priori clear whether $\y^k\to \x$, and we have to work to prove this.  
Since $Q \succ 0$, $|\x|_Q^2 = \x^\ttop Q\x$ is a Euclidian norm. We  write the objective
of the tangent program as
\[
\Phi_k(\cdot,\x) = \phi_k(\cdot,\x) + \textstyle\frac{1}{2} | \cdot - \x|_Q^2.
\] 
We know that the  aggregate plane $m_k^*(\cdot,\x)$ satisfies
$m_k^*(\y^k,\x) = \phi_k(\y^k,\x)$, so it memorizes the value $\phi_k(\y^k,\x)$. The latter gives
\begin{equation}
\label{what}
\Phi_k(\y^k,\x) = m_k^*(\y^k,\x) + \textstyle \frac{1}{2} |\y^k-\x|_Q^2.
\end{equation}
Now we introduce the quadratic function
\[
\Phi^*_k(\cdot,\x) = m_k^*(\cdot,\x) + \textstyle \frac{1}{2} |\cdot - \x|_Q^2,
\]
then from what we have just seen in (\ref{what})
\begin{equation}
\label{equal}
\Phi_k^*(\y^k,\x) = \Phi_k(\y^k,\x).
\end{equation}
Moreover, we have
\begin{equation}
\label{monotone}
\Phi_k^*(\cdot,\x) \leq \Phi_{k+1}(\cdot,\x),
\end{equation}
because according to the aggregation rule $(W_2)$ we include the aggregate plane $m_k^*(\cdot,\x)$ in the new model
$\phi_{k+1}$, that is, we have $m_k^*(\cdot,\x) \leq \phi_{k+1}(\cdot,\x)$, and hence (\ref{monotone}).
Expanding the quadratic function $\Phi_k^*(\cdot,\x)$ at $\y^k$ gives
\[
\Phi_k^*(\cdot,\x) = \Phi_k^*(\y^k,\x) + \nabla \Phi_k^*(\y^k,\x)^\ttop (\cdot-\y^k) +\textstyle \frac{1}{2}| \cdot-\y^k|_Q^2,
\]
where $\nabla \Phi_k^*=\g_k^* + Q(\y^k-\x)$. 
From the optimality condition of the tangent program at $\y^k$ we get  $\g_k^*+Q(\y^k-\x) = - \vv_k$ with $\vv_k$ in the normal cone to the 
ball $B(\x,R)$ at $\y^k$, hence 
\begin{equation}
\label{just}
\Phi_k^*(\cdot,\x) = \Phi_k^*(\y^k,\x) - \vv_k^\ttop(\cdot-\y^k) + \textstyle \frac{1}{2} |\cdot - \y^k|_Q^2.
\end{equation}
Now we argue as follows:
\begin{align}
\label{chain}
\begin{split}
\Phi_k(\y^k,\x) &= \Phi_k^*(\y^k,\x)   \hspace*{3.85cm}   \mbox{ (by (\ref{equal})) } \\
&\leq \Phi_k^*(\y^k,\x) + \textstyle\frac{1}{2} |\y^{k+1}-\y^k|_Q^2 \\
&= \Phi_k^*(\y^{k+1},\x) + \vv_k^\ttop (\y^{k+1}-\y^k)  \quad\;\;\, \mbox{ (by (\ref{just}))} \\
&\leq \Phi_{k}^*(\y^{k+1},\x)   \hspace*{3.6cm}   \mbox{(since $\vv_k^\ttop(\y^{k+1}-\y^k) \leq 0$) } \\
&\leq \Phi_{k+1}(\y^{k+1},\x)   \hspace*{3.25cm}   \mbox{(by (\ref{monotone}))} \\
&\leq \Phi_{k+1}(\x,\x)  \qquad\qquad\qquad\qquad\quad\,   \mbox{($\y^{k+1}$ minimizer of $\Phi_{k+1}(\cdot,\x)$)} \\
&= \phi(\x,\x) = f(\x).
\end{split}
\end{align}
Therefore the sequence $\Phi_k(\y^k,\x)$ is increasing and bounded above, and converges to a limit $\Phi^* \leq f(\x)$. Going back with this information to the estimation chain (\ref{chain})
shows $\frac{1}{2}|\y^{k+1}-\y^k|_Q^2 \to 0$ and also $\vv_k^\ttop (\y^{k+1}-\y^k) \to 0$. 
Since  $Q \succ 0$, we deduce
$\y^{k+1}-\y^k\to 0$. 
Then also
\[
\textstyle \frac{1}{2} |\y^{k+1}-\x|_Q^2 - \frac{1}{2} |\y^k-\x|_Q^2 \to 0,
\]
%because $|\cdot|_Q$ is a Euclidian semi-norm. 
by the triangle inequality.
In consequence
\begin{align}
\label{10}
\begin{split}
&\phi_{k+1}(\y^{k+1},\x) - \phi_k(\y^k,\x) = \Phi_{k+1}(\y^{k+1},\x) - \Phi_k(\y^k,\x) \\
&\hspace{4.5cm} -\textstyle \frac{1}{2}|\y^{k+1}-\x|_Q^2+\frac{1}{2} |\y^k-\x|_Q^2 \to 0.
\end{split}
\end{align}

So far we have not made use of the cutting plane oracle, and we need it now. 
Observe  that one of the cutting planes $m_k(\cdot,\x)$ furnished by the oracle $\mathscr O$  at $\y^k$ must be an affine support function of $\phi_{k+1}(\cdot,\x)$ at $\y^{k}$.
Indeed, $\phi_{k+1}(\cdot,\x)$ is the envelope of the exactness plane, the aggregate plane, and the cutting planes,
but here one of the cutting planes must be active at $\y^k$, i.e.,
$\phi_{k+1}(\y^k,\x) = m_{\y^k,\g_k}(\y^k,\x) = a_k + \g_k^\ttop (\y^k-\x)$ for some $(a_k,\g_k)\in \mathscr O(\y^k,\x)$,
because if the values of the planes furnished
by the oracle are below the values of the other planes maintained in $\phi_{k+1}(\cdot,\x)$, then
$\widetilde{\rho}_{k} \geq 1$. But that 
cannot happen here since we are in the case where $R_k$ is frozen, and we know that  when
$\widetilde{\rho}_k \geq 1$, the trust-region radius $R_k$ is reduced due to the secondary test in step 7.
Hence by the subgradient inequality
%\[
%g_k^\ttop (\cdot - \y^k) \leq \phi_{k+1}(\cdot,\x) - \phi_{k+1}(\y^k,\x).
%\]
%Since $\phi_{k+1}(\y^k,\x) = \phi(\y^k,\x)$, we deduce
\begin{equation}
\label{previous}
\phi_{k+1}(\y^k,\x) + \g_k^\ttop(\cdot-\y^k)\leq \phi_{k+1}(\cdot,\x).
\end{equation}
Now using (\ref{previous}) and the fact that $\phi_k(\y^k,\x) \leq \phi_{k+1}(\y^k,\x)$, we estimate as follows:
\begin{align*}
0 &\leq \phi_{k+1}(\y^k,\x) - \phi_k(\y^k,\x) \\
&= \phi_{k+1}(\y^k,\x) + \g_k^\ttop(\y^{k+1}-\y^k) - \phi_k(\y^k,\x) - \g_k^\ttop(\y^{k+1}-\y^k)\\
&\leq \phi_{k+1}(\y^{k+1},\x) - \phi_k(\y^k,\x) - \g_k^\ttop(\y^{k+1}-\y^k).
\end{align*}
Since the $\g_k$ are bounded and $\y^{k+1}-\y^k\to 0$, we have $\g_k^\ttop(\y^{k+1}-\y^k)\to 0$, hence
using (\ref{10}) we deduce $\phi_{k+1}(\y^{k},\x)-\phi_k(\y^k,\x) \to 0$, and also $\Phi_{k+1}(\y^k,\x) - \Phi_k(\y^k,\x) \to 0$.

Now we claim that  $\phi_k(\y^k,\x)\to f(\x)$. Since
$\phi_k(\y^k,\x)\leq \Phi_k(\y^k,\x) \to \Phi^* \leq f(\x)$, it remains to prove $\liminf \phi_k(\y^k,\x) \geq f(\x)$.
Suppose that this is not the case, and let $\phi_k(\y^k,\x) \to f(\x) - \eta$ for a subsequence and some $\eta > 0$.
Then also $\phi_{k+1}(\y^k,\x) \to f(\x)-\eta$ for that subsequence  (using
$\phi_k(\y^k,\x)-\phi_{k+1}(\y^k,\x)\to 0$ proved above).  Passing to yet another subsequence,  and using boundedness of the $\y^k$, we may assume
$\frac{1}{2}|\y^k-\x|_Q^2 \to \ell \geq 0$.
Choose $\delta > 0$ such that $\delta < (1-\widetilde{\gamma})\eta$. From what we have just seen there exists $k_1$ such that
\[
\phi_{k+1}(\y^k,\x) - \phi_k(\y^k,\x) < \delta
\]
for all $k \geq k_1$. Now recall that $\widetilde{\rho}_k \leq \widetilde{\gamma}$ for every $k\geq k_0$, hence
\[
\widetilde{\gamma} \left(  \Phi_k(\y^k,\x) - f(\x) \right)
\leq \phi_{k+1}(\y^k,\x) - f(\x)
\leq \phi_k(\y^k,\x)-f(\x) + \delta 
. 
\]
Passing to the limit 
gives $-\widetilde{\gamma} \eta + \widetilde{\gamma} \ell \leq -\eta +  \delta$, hence $(1-\widetilde{\gamma})\eta + \ell \widetilde{\gamma} \leq \delta$,
which contradicts the choice of $\delta$.
Hence $\phi_k(\y^k,\x) \to f(\x)$.  We immediately deduce that $\Phi_k(\y^k,\x)\to f(\x)$ and $\Phi_{k+1}(\y^k,\x)\to f(\x)$.
This also shows
$|\y^k-\x|_Q \to 0$, as follows from the estimates
\[
\phi_k(\y^k,\x) \leq
\Phi_k(\y^k,\x) =\phi_k(\y^k,\x) + \textstyle\frac{1}{2}|\y^k-\x|_Q^2 \leq \Phi^* \leq f(\x),
\]
where due to $\phi_k(\y^k,\x)\to f(\x)$ all terms go to $f(\x)$, implying $|\y^k-\x|_Q \to 0$.
%Since $Q \succ 0$ we deduce $\y^k\to \x$, and this is where the proof no longer works if only $Q \succeq 0$. 
%Note that this shows that $\y^k$ is in the interior of $B(\x,R)$
%from some counter onward, so that eventually $\vv_k=0$.
%%This also shows that $\vv_k^\ttop(\y^{k+1}-\y^k)\to 0$, as this term is sandwiched in the estimate (\ref{chain}).

Let us now show that $\x$ must be a critical point of (\ref{program}).
Let $\Phi(\cdot,\x) = \phi(\cdot,\x) +\frac{1}{2} |\cdot - \x|_Q^2$ be the second order model associated with the envelope model $\phi$ of $\mathscr O$.
Let $\y \in C \cap B(\x,R)$, then $\phi(\y,\x) \geq \phi_k(\y,\x)$ by construction of the working models, hence
$\Phi(\y,\x) \geq \Phi_k(\y,\x) \geq \Phi_k(\y^k,\x)$, the latter since $\y^k$ is the minimizer of $\Phi_k(\cdot,\x)$ over $C \cap B(\x,R)$. 
Since $\Phi_k(\y^k,\x) \to f(\x)$ by what we have seen above, we deduce
$\Phi(\y,\x) \geq f(\x)$ for every $\y\in C \cap B(\x,R)$. But $\Phi(\x,\x) = f(\x)$, which means $\x$ is a minimizer
of $\Phi(\cdot,\x)$ over $C \cap B(\x,R)$, hence over $C$.  In consequence there exists $\g\in \partial_1\Phi(\x,\x)$ and 
$\vv \in N_C(\x)$ such that $\g+\vv=0$. 
Since $\partial_1\Phi(\x,\x)=\partial_1 \phi(\x,\x) \subset \partial f(\x)$, this proves that
$\x$ is a critical point of (\ref{program}). 
%\hfill $\square$
\end{proof}

\begin{remark}
\label{fall_back}
Let us now see our first method to justify
aggregation in the case $Q \succ 0$,
$\z^k\not=\y^k$.
%We use $Q\succ 0$,  which is beneficial anyway, as it has a stabilizing effect on the solution $\y^k$ of the tangent program. and in 
In the algorithm we first
allow $\z^k$ as a trial point in  step 4. If acceptance %$\rho_k \stackrel{?}{\geq} \gamma$ 
in step 5 fails for $z^k$,
then we include the cutting plane at $\z^k$ and continue with step 7 for $\z^k$. However, if step 7 for $\z^k$ gives no reduction of $R_k$, then we are in the difficult case
not covered by Lemma \ref{short}.
We then do the following. We fall back on $\y^k$
as the trial point, i.e., we forget about $\z^k$. 
When $\y^k$ is not accepted, we proceed with step 6, now for $\y^k$, and apply aggregation.  This is  justified,
because we are  in the situation covered by Lemma \ref{long}. Note that the additional work required in steps 6 and 7 is marginal, so we do not 
waste time by this  evasive maneuver.  We could even perform this maneuver as default (i.e. checking $\y^k$ whenever $\z^k$ fails). 
We have proved the following
\end{remark}

\begin{lemma}
\label{black_lemma}
Suppose the inner loop at $\x$ turns infinitely.
%Suppose $Q \succ 0$ and that the aggregation rule is used in step {\rm 6} to limit the size of $\phi_k$ to any pre-defined fixed number $\geq 3$.
Let $Q \succ 0$ and accept to fall back on $\y^k$ if $\z^k$ fails in step {\rm 5} with $\widetilde{\rho}_k < \widetilde{\gamma}$ in step {\rm 7}.
 Then $0\in \partial f(\x)+N_C(\x)$.
 \hfill $\square$
\end{lemma}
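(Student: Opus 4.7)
The plan is to prove the statement by a dichotomy on the asymptotic behavior of the trust-region radius. By construction of step 7, $R_k$ is non-increasing during the inner loop, so either $\liminf_{k\to\infty} R_k = 0$, or $R_k$ is eventually constant at some value $R > 0$ from a counter $k_0$ onward. In the first alternative, I would invoke Lemma \ref{short} directly: its proof relies only on axioms $(\mathscr O_1)$--$(\mathscr O_4)$ and the trial-step rule (\ref{trial}), and the fall-back choice $\z^k = \y^k$ trivially satisfies (\ref{trial}) because $\Theta \geq 1$ and $\theta \leq 1$, so nothing in the argument needs adjustment and the conclusion $0 \in \partial f(\x) + N_C(\x)$ follows.

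In the second alternative, $R_k$ being frozen from $k_0$ onward forces $\widetilde{\rho}_k < \widetilde{\gamma}$ for every $k \geq k_0$, since otherwise step 7 would have halved $R_k$. By the fall-back rule of Remark \ref{fall_back}, the algorithm then discards $\z^k$ and re-executes the cut update and secondary test with $\y^k$ as the trial step. Thus, from $k_0$ on, the inner loop coincides with the dynamics analyzed in Lemma \ref{long}: trial point equal to $\y^k$, $R_k$ bounded away from $0$, and $Q \succ 0$. The conclusion follows from that lemma.

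The step I expect to be delicate is the bookkeeping associated with the fall-back: I must verify that after $\z^k$ is discarded, the new working model $\phi_{k+1}(\cdot,\x)$ contains the exactness plane, the aggregate plane built from the tangent-program solution $\y^k$ (and not from $\z^k$), and at least one cutting plane furnished by $\mathscr O(\y^k,\x)$, so that rules $(W_1)$--$(W_3)$ are truly applied with $\y^k$ in the role of the null step. Once this is in place, no further computation is needed: the two alternatives exhaust all possibilities for an infinite inner loop and each is handled by the corresponding preceding lemma.
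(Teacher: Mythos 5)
Your dichotomy on $R_k$ is precisely the argument the paper makes in Remark~\ref{fall_back}, which the author explicitly offers as the proof of Lemma~\ref{black_lemma} (the remark ends with ``We have proved the following''). Both you and the paper dispatch the case $\liminf R_k=0$ via Lemma~\ref{short} and the frozen-radius case via Lemma~\ref{long} after falling back to $\y^k$; you have also correctly flagged the only genuinely delicate point, namely that after discarding $\z^k$ the rules $(W_1)$--$(W_3)$ must be re-applied with $\y^k$ playing the role of the null step, which is exactly what the paper's informal phrasing (``proceed with step~6, now for $\y^k$, and apply aggregation'') is asserting.
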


Let us now discuss a second way to justify aggregation in the case $Q \succ 0$, $z^k \not= y^k$, which does {\em not} require
the falling back procedure above. 
The idea proposed in \cite{hertlein} is to sharpen condition (\ref{trial}) as follows. Fix a
sequence $\nu_k \to 0^+$ and require that the trial points $z^k$ satisfy
\begin{equation}
\label{strong_trial}
z^k \in B(y^k,\nu_k) \mbox{ and } (\ref{trial}).
\end{equation} 
Then the technique of Lemma \ref{long} allows a direct justification
of aggregation with the aggregate plane taken at $y^k$, and the cutting plane taken at $z^k$.

\begin{lemma}
\label{blue_lemma}
Suppose trial points $z^k$ in step {\rm 4} of the algorithm satisfy the stronger condition {\rm (\ref{strong_trial})}. Suppose in case of a null step $z^k$
cutting planes are generated at $z^k$ and the aggregate plane is generated at $y^k$. Let $Q \succ 0$. Suppose the inner loop turns infinitely. Then
$x$ is a critical point of {\rm (\ref{program})}.
\end{lemma}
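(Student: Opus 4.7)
My plan is to mirror the proof of Lemma \ref{long} but carefully transfer the estimates that are now anchored at $z^k$ back to $y^k$, exploiting $\|z^k-y^k\|\leq \nu_k\to 0^+$ together with uniform Lipschitz bounds on the working models.

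First I would dispose of the easy case: if $\liminf_{k\to\infty} R_k=0$, Lemma \ref{short} gives criticality directly. So assume $R_k\equiv R$ is frozen from some $k_0$ onward, which by step 7 means $\widetilde{\rho}_k<\widetilde{\gamma}$ for $k\geq k_0$. The chain of inequalities (\ref{chain}) in Lemma \ref{long} is driven exclusively by the aggregate plane, which by hypothesis is still taken at $y^k$; therefore that argument goes through verbatim and delivers (i) $\Phi_k(y^k,x)\uparrow \Phi^*\leq f(x)$, (ii) $y^{k+1}-y^k\to 0$ (using $Q\succ 0$), and consequently (\ref{10}), i.e. $\phi_{k+1}(y^{k+1},x)-\phi_k(y^k,x)\to 0$.

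Next I would bring in the cutting plane, now taken at $z^k$. Since $\mathscr O$ is bounded on bounded sets, all subgradients appearing in the successive working models are uniformly bounded by some $L$, so $\phi_k,\phi_{k+1},\Phi_k$ are uniformly $L$-Lipschitz on a fixed neighborhood of $x$. Exactly as in Lemma \ref{long}, at least one plane $(a_k,g_k)\in\mathscr O(z^k,x)$ must be active in $\phi_{k+1}$ at $z^k$ (otherwise $\widetilde{\rho}_k\geq 1$ would trigger a radius reduction, contradicting $R_k=R$), giving
\[
\phi_{k+1}(z^k,x)+g_k^\ttop (\cdot-z^k)\leq \phi_{k+1}(\cdot,x).
\]
Evaluating at $y^{k+1}$ and using $\|y^{k+1}-z^k\|\leq \|y^{k+1}-y^k\|+\nu_k\to 0$ together with $\|g_k\|\leq L$, I would conclude $\phi_{k+1}(z^k,x)-\phi_k(y^k,x)\to 0$; then the uniform Lipschitz bound $|\phi_{k+1}(z^k,x)-\phi_{k+1}(y^k,x)|\leq L\nu_k$ yields $\phi_{k+1}(y^k,x)-\phi_k(y^k,x)\to 0$, the analogue of the corresponding fact in Lemma \ref{long}.

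The remaining step, which I expect to be the main technical nuisance, is the contradiction argument showing $\phi_k(y^k,x)\to f(x)$. Suppose instead $\phi_k(y^k,x)\to f(x)-\eta$ on a subsequence, with $\eta>0$, and $\tfrac{1}{2}|y^k-x|_Q^2\to \ell\geq 0$. Rewriting $\widetilde{\rho}_k\leq\widetilde{\gamma}$ as $\phi_{k+1}(z^k,x)-f(x)\geq \widetilde{\gamma}(\Phi_k(z^k,x)-f(x))$ and transferring both sides from $z^k$ to $y^k$ via the uniform Lipschitz estimate (absorbing an $O(\nu_k)$ error, and using $\Phi_k(y^k,x)\leq\Phi_k(z^k,x)$ since $y^k$ minimizes $\Phi_k$), the passage to the limit produces
\[
-\eta\geq \widetilde{\gamma}(\ell-\eta),\qquad\text{i.e.}\qquad -(1-\widetilde{\gamma})\eta\geq \widetilde{\gamma}\ell,
\]
whose left side is negative and right side non-negative, a contradiction. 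Hence $\phi_k(y^k,x)\to f(x)$, which forces $\Phi^*=f(x)$ and $|y^k-x|_Q\to 0$. Criticality of $x$ is then extracted from the minimization characterization exactly as in the last paragraph of Lemma \ref{long}: for any $y\in C\cap B(x,R)$ one has $\Phi(y,x)\geq\Phi_k(y,x)\geq\Phi_k(y^k,x)\to f(x)=\Phi(x,x)$, so $x$ minimizes $\Phi(\cdot,x)$ over $C$, giving $0\in\partial_1\Phi(x,x)+N_C(x)\subset \partial f(x)+N_C(x)$.
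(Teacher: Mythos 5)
Your proof is essentially the paper's argument, with a cosmetic difference: where the paper anchors the estimates at $z^k$ (obtaining $\phi_{k+1}(z^k,x)-\phi_k(z^k,x)\to 0$ and $\phi_k(z^k,x)\to f(x)$ before transferring to $y^k$), you transfer to $y^k$ immediately and carry the $O(\nu_k)$ Lipschitz error through each step. Both routes exploit the same three ingredients — the monotone chain (\ref{chain}) driven by the aggregate plane at $y^k$, the active cutting plane at $z^k$ (forced by $\widetilde{\rho}_k<\widetilde{\gamma}$), and the contradiction argument with $\eta,\ell$ — and deliver the same conclusion.

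Two small points worth tightening. First, the subgradient inequality evaluated at $y^{k+1}$ only gives you $\limsup\big(\phi_{k+1}(z^k,x)-\phi_k(y^k,x)\big)\le 0$; to conclude convergence to $0$ you also need the lower bound, which follows from $\phi_{k+1}(z^k,x)\ge\phi_k(z^k,x)$ (itself a consequence of $\widetilde{\rho}_k<\widetilde{\gamma}<1$) together with $|\phi_k(z^k,x)-\phi_k(y^k,x)|\le L\nu_k$; this should be stated. Second, the side remark ``$\Phi_k(y^k,x)\le\Phi_k(z^k,x)$ since $y^k$ minimizes $\Phi_k$'' is not quite right: $y^k$ minimizes $\Phi_k(\cdot,x)$ only over $C\cap B(x,R_k)$, and under (\ref{strong_trial}) the point $z^k$ may sit slightly outside that ball. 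Fortunately this inequality is not needed — the $O(\nu_k)$ Lipschitz transfer you invoke in the same sentence already absorbs the difference, so simply drop the minimality claim and keep the Lipschitz argument.
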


\begin{proof}
We follow the line of proof in Lemma \ref{long}. Up to formula (\ref{10}) only properties of the aggregate plane are used, so we arrive at that same formula.
Now we use that $\|z^k-y^k\|\leq \nu_k\to 0$, and then since the $\phi_k(\cdot,x)$ have a joint Lipschitz constant $L$ on the bounded set $B(x,R)$,
we deduce $|\phi_{k}(z^{k},x)  - \phi_k(y^k,x)| \leq L \|z^k-y^k\| \to 0$. In view of (\ref{10}) this implies
$$
\phi_{k+1}(z^{k+1},x) - \phi_k(z^k,x) \to 0.
$$
Now observe that we must have $\phi_{k+1}(z^k,x) \geq \phi_k(z^k,x)$, because otherwise $\widetilde{\rho}_k < \widetilde{\gamma}$ could
not be satisfied. Hence one of the cutting planes $m_k(\cdot,x)$ is an affine support plane of $\phi_{k+1}(\cdot,x)$ at $z^k$, i.e.,
$\phi_{k+1}(z^k,x) = m_k(z^k,x) = a_k + g_k^\ttop(z^k-x)$ for at least one $(a_k,g_k)\in \mathscr O(z^k,x)$.  Then from the subgradient inequality at $z^k$, 
\[
\phi_{k+1}(z^k,x) + g_k^\ttop (\cdot - x) \leq \phi_{k+1}(\cdot,x).
\] 
Consequently, if we apply this at $z^{k+1}$, 
\begin{align*}
0 & \leq \phi_{k+1}(z^k,x) - \phi_k(z^k,x) \\
&= \phi_{k+1}(z^k,x) + g_k^\ttop(z^{k+1}-z^k) - \phi_k(z^k,x) - g_k^\ttop(z^{k+1}-z^k) \\
&\leq \phi_{k+1}(z^{k+1},x) - \phi_k(z^k,x) - g_k^\ttop (z^{k+1}-z^k).
\end{align*}
Since $y^{k+1}-y^k\to 0$ by (\ref{chain}) and $z^k-y^k\to 0$ by (\ref{strong_trial}), we deduce $z^{k+1}-z^k\to 0$. Since the $g_k$ are bounded, we have $g_k^\ttop(z^{k+1}-z^k)\to 0$. On the other hand, we know that
$\phi_{k+1}(z^{k+1},x)-\phi_k(z^k,x)\to 0$, so altogether we deduce 
\begin{equation}
\label{new2}
\phi_{k+1}(z^k,x) - \phi_k(z^k,x)\to 0,
\end{equation}
and then also $\Phi_{k+1}(z^k,x)-\Phi_k(z^k,x)\to 0$. 

Now we prove that $\phi_k(z^k,x)\to f(x)$. 
Since $\phi_k(z^k,x) - \phi_k(y^k,x)\to 0$ and by (\ref{chain}) $\phi_k(y^k,x) \to \Phi^* \leq f(x)$, we have
$\lim_k \phi_k(z^k,x)=\Phi^* \leq f(x)$, and it remains to show the opposite estimate.

Suppose that contrary to what is claimed $\phi_k(z^k,x) \to f(x) - \eta$ for some $\eta > 0$ and a subsequence. Then also $\phi_{k+1}(z^k,x)\to f(x)-\eta$
as a consequence of (\ref{new2}).  Passing to yet another subsequence,  and using boundedness of the $\z^k$, we may assume
$\frac{1}{2}|\z^k-\x|_Q^2 \to \ell \geq 0$.
Choose $\delta > 0$ such that $\delta < (1-\widetilde{\gamma})\eta$. From what we have just seen there exists $k_1$ such that
\[
\phi_{k+1}(\z^k,\x) - \phi_k(\z^k,\x) < \delta
\]
for all $k \geq k_1$. Now recall that $\widetilde{\rho}_k \leq \widetilde{\gamma}$ for every $k\geq k_0$, hence
\[
\widetilde{\gamma} \left(  \Phi_k(\z^k,\x) - f(\x) \right)
\leq \phi_{k+1}(\z^k,\x) - f(\x)
\leq \phi_k(\z^k,\x)-f(\x) + \delta 
. 
\]
Passing to the limit 
gives $-\widetilde{\gamma} \eta + \widetilde{\gamma} \ell \leq -\eta +  \delta$, hence $(1-\widetilde{\gamma})\eta + \ell \widetilde{\gamma} \leq \delta$,
which contradicts the choice of $\delta$.
Hence $\phi_k(\z^k,\x) \to f(\x)$.  

Using $z^k-y^k\to 0$, we deduce
$\phi_k(y^k,x) \to f(x)$ and then  from (\ref{chain}) we get $\Phi_k(y^k,x) \to f(x)$. That implies $|y^k-x|_Q^2 \to 0$, and from $Q\succ 0$ follows
$y^k\to x$. 

Let us now show that $\x$ must be a critical point of (\ref{program}).
Let $\Phi(\cdot,\x) = \phi(\cdot,\x) +\frac{1}{2} |\cdot - \x|_Q^2$ be the second order model associated with the envelope model $\phi$ of $\mathscr O$.
Let $\y \in C \cap B(\x,R)$, then $\phi(\y,\x) \geq \phi_k(\y,\x)$ by construction of the working models, hence
$\Phi(\y,\x) \geq \Phi_k(\y,\x) \geq \Phi_k(\y^k,\x)$, the latter since $\y^k$ is the minimizer of $\Phi_k(\cdot,\x)$ over $C \cap B(\x,R)$. 
Since $\Phi_k(\y^k,\x) \to f(\x)$ by what we have seen above, we deduce
$\Phi(\y,\x) \geq f(\x)$ for every $\y\in C \cap B(\x,R)$. But $\Phi(\x,\x) = f(\x)$, which means $\x$ is a minimizer
of $\Phi(\cdot,\x)$ over $C \cap B(\x,R)$, hence over $C$.  In consequence there exists $\g\in \partial_1\Phi(\x,\x)$ and 
$\vv \in N_C(\x)$ such that $\g+\vv=0$. 
Since $\partial_1\Phi(\x,\x)=\partial_1 \phi(\x,\x) \subset \partial f(\x)$, this proves that
$\x$ is a critical point of (\ref{program}). 
%\hfill $\square$
\end{proof}

\begin{example}
We show by way of an example that the hypothesis $Q \succ 0$ cannot be removed neither from Lemmas \ref{long},\ref{black_lemma}, nor from Lemma \ref{blue_lemma}.
Consider (\ref{program}) with  $f(\x) = \frac{1}{2}x_1^2-x_1 + \frac{1}{4} x_2^2$, $C=\mathbb R^2$. 
In the algorithm choose $\gamma=\frac{1}{2}$,
$Q=0$,  the trust region norm $\|\cdot\| = |\cdot|_\infty$. Suppose the current iterate is $\x=(x_1,x_2)=(0,0)$ with $f(0,0)=0$, $\g_0=(-1,0)\in \partial f(0,0)$. The exactness plane
is $m_0(\z,\x) = -z_1$, and we let $\phi_1=m_0$. Suppose the current trust region radius is $R_1=1$,
then a minimizer of the tangent program is $\y^1 = (1,1)$. In fact, the entire segment $\{1\} \times [-1,1]$ is minimizing, but we choose $\y^1$ at a corner. Take $z^1=y^1$.
The predicted progress at $\y^1$ is $1$, because $m_0(\y^1,\x)=\phi_1(\y^1,\x)=-1$.
The aggregate plane $m_1^*(\cdot,\x)$ at $\y^1$
coincides with $m_0(\cdot,\x)$, because there is only one plane in the working model which can be active.

\begin{pspicture}(-4,-5)(4,2)

\psscalebox{1.5}{
\psplot[linecolor=blue,plotstyle=dots,
      plotpoints=250,dotstyle=*,dotsize=1pt]
      {-2}{2}{x 2 exp 0.25 mul  0.5 sub} \uput[0](1.3,.8){\tiny ${\color{blue}f(1,\cdot)}$}

\psline[arrows=->,arrowsize=4pt 1](-2,0)(2.5,0)
\uput[0](2.4,0){\tiny $y_2\qquad $    cut   $[y_1=1]$}
\psline(-1,-1)(1,-1) \uput[0](1,-1){\tiny $m_0$}
\psline(-1,-.1)(-1,.1) \uput[0](-1.45,.3){-\tiny$1$}
\psline(0,-.1)(0,.1)
\psline(1,-.1)(1,.1)  \uput[0](0.8,.3){\tiny$1$}

\psline(-.5,-1)(1,-.25)(2.5,.5)
%\psline(-1,-2)(2,1)
\psline[linestyle=dotted](-.5,-1)(-1,-1.25) 
\uput[0](-1.3,-1.4){\tiny $m_1$}
\psdot(-1,-1)
\psdot(-.5,-1)
\psdot(1,-.25)
\psline(1,-1.1)(1,-0.9)
}
\end{pspicture}

We have $f(\y^1)=-\frac{1}{4}$. Therefore $\rho_1 = \frac{0-(-\frac{1}{4})}{0-(-1)} =\frac{1}{4} < \gamma$
so $\y^1$ is a null step. The function being convex, the cutting plane at $\y^1$ is the tangent plane
$m_1(\z,\x)= \frac{1}{4} +\frac{1}{2}(z_2-1)$.
Since $f$ is convex, the cutting plane always gives an improvement due to $\rho=\widetilde{\rho}$, so we keep
$R_2=R_1$, and we have
$\phi_2(\cdot,\x) = \max\{m_0(\cdot,\x),m_1(\cdot,\x)\}=\max\left\{-z_1,\frac{1}{4}+\frac{1}{2}(z_2-1)\right \}$. 
Now a minimizer of $\phi_2(\cdot,\x)$ over the trust region is $\y^2=(1,-1)$. 
(The figure shows the trace of the graph of $\phi_1,\phi_2,f$ in the plane $y_1=1$).
Since
$m_0((1,-1),\x)=-1$ and $m_1((1,-1),\x) = -\frac{5}{4}$,  only $m_0$ is active at $\y^1$, hence the aggregate plane is again $m_2^*=m_0$.
Since $\rho_2=\widetilde{\rho}_2 = \frac{1}{4}$, the situation is the same as in the first trial, (the figure being the one reflected by $y_2=0$),
and we need a cutting plane at $\y^2$. 
This is now $m_2(\z,\x) = -\frac{1}{4} -\frac{1}{2}(z_2+1)$. Now if we do not keep the plane $m_1$ in the working model, the tangent program is minimizing
$\phi_3(\cdot,\x) = \max\{m_0(\cdot,\x),m_2(\cdot,\x)\}=\max\{-z_1,-\frac{1}{4}-\frac{1}{2}(z_2+1)\}$ over $[-1,1]^2$,
and a solution is $\y^3=(1,1)$. This means, the inner loop oscillates between $\y^{2k+1}=\y^1$ and $\y^{2k}=\y^2$,
and no progress occurs.

\end{example}
%}

%\begin{remark}
%In the classical trust-region method failure of the trial step {\em always}
%leads to reduction of the trust-region radius. 
%%%One occasionally sees non-smooth versions where authors do the same. 
%As we have already shown in \cite[section 5.5]{ANR:2016} and \cite{loja}, that must fail. These examples  also shows that
%the Cauchy point fails in the non-smooth case. 
%\end{remark}

\begin{remark}
The phenomenon in this example cannot occur if $\|\cdot\| =|\cdot|_2$. However, since the choice of this norm is not
as useful as in the classical trust region method due to the affine constraints in $\phi_k$, we do not present the details
of the result here.
\end{remark}

\begin{remark}
In the general case $Q\succeq 0$ we can still limit the size of the working model to
$n+2$ using Carathéodory's theorem. See \cite{ANR:2016} for the details.
\end{remark}

\begin{remark}
In summary, justification of the aggregate rule in the case  $Q\succ 0$ can be based either on the technique
of remark \ref{fall_back}, or on adopting the stronger rule (\ref{strong_trial}) for trial steps.         
\end{remark}

\begin{remark}
We investigate whether
backtracking steps $\z_\alpha = \x + \alpha(\y^k-\x)$ for $0 < \alpha < 1$ could be trial steps in the sense
of (\ref{trial}) or (\ref{strong_trial}), as this would allow to use linesearch in case of rejection of $y^k$.

Let $\Delta := f(x) -\Phi_k(y^k,x)>0$. 
By convexity the line joining $(x,f(x))$ and $(y^k,f(x)-\Delta)$ is above the curve 
$\alpha \mapsto \Phi_k(z_\alpha,x)$.    Therefore $z_\theta$ satisfies $f(x) - \Phi_k(z_\theta,x) \geq \theta \Delta$. Since
$z_\theta \in B(x,\|x-y^k\|)$ is clear, every $z_\alpha$ with
$\theta \leq \alpha \leq 1$ is a trial point in the sense of (\ref{trial}). But not necessarily in the sense of (\ref{strong_trial}),
because $\|z_\theta - y^k\| = (1-\theta) \|x-y^k\|$.  If we impose $\|z_\theta-y^k\| \leq \nu_k$, then shorter steps might be forced.
This is an argument for the fall back method.
\end{remark}

\subsection{Convergence of the outer loop}
We are now ready to prove the main convergence result for our optimization method. 
This is where strictness of the oracle $(\widehat{\mathscr O}_3)$ is needed.

\begin{theorem}
Suppose $\x^1\in C$ is such that the level set $\{\x\in C: f(\x) \leq f(\x^1)\}$ is bounded.  Let $\x^j\in C$ be the sequence of serious
iterates generated by the bundle trust-region algorithm based on a strict cutting plane oracle. Then every accumulation point $\x^*$ of the $\x^j$ is a critical point of 
{\rm (\ref{program})}.
%\hfill $\square$
\end{theorem}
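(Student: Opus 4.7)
The plan is to argue by contradiction: suppose some accumulation point $\x^*$ of $\{\x^j\}$ satisfies $0\notin\partial f(\x^*)+N_C(\x^*)$, and derive a contradiction from the telescoping budget supplied by the acceptance test.

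First I would show monotone decrease $f(\x^{j+1})\leq f(\x^j)-\gamma[f(\x^j)-\Phi_{k_j}(\z^{k_j},\x^j)]$, where the predicted reduction is non-negative because $\y^{k_j}$ minimizes $\Phi_{k_j}(\cdot,\x^j)$ over a set containing $\x^j$ with $\Phi_{k_j}(\x^j,\x^j)=f(\x^j)$, combined with the trial-step rule (\ref{trial}). Boundedness of the level set together with continuity of $f$ gives $f(\x^j)\downarrow f^*\in\mathbb R$, hence $\sum_j[f(\x^j)-\Phi_{k_j}(\z^{k_j},\x^j)]<\infty$. Extracting a subsequence $j_i$ with $\x^{j_i}\to\x^*$ forces $f(\x^{j_i})-\Phi_{k_{j_i}}(\z^{k_{j_i}},\x^{j_i})\to 0$, and the trial-step rule propagates this to $f(\x^{j_i})-\Phi_{k_{j_i}}(\y^{k_{j_i}},\x^{j_i})\to 0$.

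Second, since $\partial_1\phi(\x^*,\x^*)\subset\partial f(\x^*)$ by $(M_1)$, the non-criticality of $\x^*$ for $f$ transfers, by contraposition, to non-criticality of $\phi(\cdot,\x^*)+i_C$ at $\x^*$, where $\phi$ is the envelope model of $\mathscr O$. Hence there exist $\bar\y\in C$ and $\eta>0$ with $\phi(\bar\y,\x^*)<f(\x^*)-3\eta$. By convexity of $\phi(\cdot,\x^*)$, the segment $\y_\alpha=(1-\alpha)\x^*+\alpha\bar\y$ lies in $C$ for $\alpha\in[0,1]$ and satisfies $\phi(\y_\alpha,\x^*)\leq f(\x^*)-3\alpha\eta$. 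Using $(M_3)$ and continuity of $f$, for small $\alpha$ and large $i$ one gets $\phi(\y_\alpha,\x^{j_i})\leq f(\x^{j_i})-2\alpha\eta$; combined with $\phi_{k_{j_i}}\leq\phi$ and $\|Q_{j_i}\|\leq q$ this yields $\Phi_{k_{j_i}}(\y_\alpha,\x^{j_i})\leq f(\x^{j_i})-\alpha\eta$ once $\alpha$ is small enough. Provided that $\alpha>0$ can be chosen so that $\y_\alpha$ is admissible for the tangent program, i.e.\ $\|\y_\alpha-\x^{j_i}\|\leq R_{k_{j_i}}$, optimality of $\y^{k_{j_i}}$ gives $f(\x^{j_i})-\Phi_{k_{j_i}}(\y^{k_{j_i}},\x^{j_i})\geq\alpha\eta$; if the radii $R_{k_{j_i}}$ are bounded below, $\alpha$ can be taken uniformly positive, contradicting the first paragraph.

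The main obstacle is the complementary case $R_{k_{j_i}}\to 0$, in which the comparison point $\y_\alpha$ is squeezed out of the trust region. Here I would fall back on the bundles accumulated by the inner loops that produced $\x^{j_i}$: because $R_{k_{j_i}}$ had to be reduced many times via step 7, the secondary-test history forces the cutting planes generated along those inner loops to be asymptotically informative even as their anchors slide. Strictness $(\widehat{\mathscr O}_3)$ is precisely what upgrades the pointwise axiom $(\mathscr O_3)$, used in the stationary arguments of section \ref{finiteness}, to the drift-tolerant form needed when $\x^{j_i}$ and the cut-generation basepoints jointly approach $\x^*$; it lets one replay the proof of Lemma \ref{short} with the serious iterate moving, decomposing the limiting aggregate subgradient as $\p+\q=0$ with $\p\in\partial f(\x^*)$ (through the subgradient inequality for $\phi_{k_{j_i}}$ together with $(M_1)$ applied at $\x^*$) and $\q\in N_C(\x^*)$ (by closed graph of $N_C$), which again contradicts non-criticality and closes the argument.
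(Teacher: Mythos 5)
Your first two paragraphs are essentially sound, and the second one actually gives a pleasant alternative to the paper's parts 3)--5): instead of extracting the aggregate subgradient from the KKT conditions of the tangent program and showing it tends to zero, you compare directly against a better point $\bar{\y}\in C$ with $\phi(\bar{\y},\x^*)<f(\x^*)-3\eta$, pushing $\y_\alpha=(1-\alpha)\x^*+\alpha\bar{\y}$ into the trust region to force a uniformly positive predicted decrease, which contradicts $f(\x^{j_i})-\Phi_{k_{j_i}}(\y^{k_{j_i}},\x^{j_i})\to 0$. The ingredients ($(M_1)$, $(M_3)$, $\phi_k\leq\phi$, $\|Q_j\|\leq q$) are all used legitimately. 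This covers in one stroke the paper's multiplier-zero case and the bounded-radius case.

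The third paragraph, however, has a genuine gap. Your plan rests on the assertion that when $R_{k_{j_i}}\to 0$, ``$R_{k_{j_i}}$ had to be reduced many times via step 7'' in the $j_i$-th inner loop, so that the secondary test produced informative cuts in that very loop. That is not true: the radius in step 3 is initialised from the memory radius $R_{j}^\sharp$, and by step 8 the memory radius can shrink across outer iterations (whenever $\rho_{k_j}<\Gamma$, the memory is set to $R_{k_j}$, and it is never increased unless $\rho_{k_j}\geq\Gamma$). So $R_{k_{j_i}}$ can tend to $0$ even if step 7 never fires in the $j_i$-th inner loop: the reductions happened in \emph{earlier} loops indexed outside your subsequence. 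This is exactly the paper's case 7), and it requires a separate argument -- the paper constructs, for each $j\in J$, the largest earlier index $j'<j$ at which a reduction occurred, then a further index $j''\in[j',j)$ where the aggregate norm first drops below a threshold $\epsilon$, and exploits summability of $f(\x^i)-f(\x^{i+1})$ together with Lemma~\ref{est3} to show $\x^{j''+1}\to\x^*$ and to extract a vanishing aggregate subgradient. None of this is present in, or implied by, ``replay Lemma~\ref{short} with the serious iterate moving.''

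Even in the sub-case you do address (a reduction occurring inside the $j_i$-th loop at counter $k_{j_i}-\nu_{j_i}$), the mechanism by which the contradiction is obtained is not spelled out, and your comparison-point strategy from the second paragraph cannot be re-used because $\y_\alpha$ is squeezed outside the shrinking trust region. The paper instead lower-bounds the denominator $f(\x^j)-\Phi_{k_j-\nu_j}(\z^{k_j-\nu_j},\x^j)$ via Lemma~\ref{est3} in terms of the aggregate subgradient, invokes strictness $(\widehat{\mathscr O}_3)$ to bound the numerator $f(\z^{k_j-\nu_j})-\phi_{k_j-\nu_j+1}(\z^{k_j-\nu_j},\x^j)$ by $\epsilon_j\|\z^{k_j-\nu_j}-\x^j\|$, and concludes $\widetilde\rho_{k_j-\nu_j}<\widetilde\gamma$ for large $j$, contradicting the recorded reduction. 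You correctly sense that strictness is what upgrades $(\mathscr O_3)$ to cope with the drifting anchors, but the chain of inequalities that actually makes this bite needs to be exhibited, not gestured at.

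In short: a nice elementary shortcut where the radius stays away from zero; an unclosed gap where it does not, and in particular no coverage of the case in which the radius collapse is inherited from earlier outer iterations.
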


\begin{proof}
1)
Without loss we consider the case where the algorithm generates an infinite sequence $\x^j\in C$
of serious iterates. Suppose that at  outer loop counter $j$ the inner loop finds a successful trial
step at inner loop counter $k_j$, that is, $\z^{k_j} = \x^{j+1}$, where the corresponding
solution of the tangent program is $\tilde{\x}^{j+1}=\y^{k_j}$. Then $\rho_{k_j}\geq \gamma$, which means
\begin{equation}
\label{descent1}
f(\x^j) - f(\x^{j+1}) \geq \gamma \left( f(\x^j) - \Phi_{k_j}(\x^{j+1},\x^j) \right).
\end{equation}
Moreover, by condition (\ref{trial}) we have $\|\tilde{\x}^{j+1}-\x^j\|\leq \Theta \|\x^{j+1}-\x^j\|$ and
\begin{equation}
\label{descent1a}
f(\x^j)-\Phi_{k_j}(\x^{j+1},\x^j) \geq \theta \left( f(\x^j)-\Phi_{k_j}(\tilde{\x}^{j+1},\x^j)  \right),
\end{equation}
and combining (\ref{descent1}) and (\ref{descent1a}) gives
\begin{eqnarray}
\label{descent}
f(\x^j)-f(\x^{j+1}) \geq \gamma\theta \left( f(\x^j)-\Phi_{k_j}(\tilde{\x}^{j+1},\x^j)  \right).
\end{eqnarray}
Since $\y^{k_j}=\tilde{\x}^{j+1}$ is a solution of the $k_j^{\rm th}$ tangent program (\ref{tangent}) of the
$j^{\rm th}$ inner loop, by the optimality condition there exist
$\g_j^*\in \partial \left( \phi_{k_j}(\cdot,\x^j)+i_C \right)(\tilde{\x}^{j+1})$ and a unit normal vector
$\vv_j$ to the ball $B(\x^j,R_{k_j})$ at $\tilde{\x}^{j+1}$ such that 
\begin{equation}
\label{kt}
\g_j^* + Q_j(\tilde{\x}^{j+1}-\x^j) + \lambda_j \vv_j = 0,
\end{equation}
where $\lambda_j = \|\g_j^*+Q_j (\tilde{\x}^{j+1}-\x^j)\|$.
Decomposing further, there exist
$\p_j\in \partial_1 \phi_{k_j}(\tilde{\x}^{j+1},\x^j)$ and $\q_j\in N_C(\tilde{\x}^{j+1})$
such that
\begin{equation}
\label{opt}
0 = \g_j^* + Q_j(\tilde{\x}^{j+1}-\x^j)+\lambda_j\vv_j=\p_j +  \q_j +  Q_j(\tilde{\x}^{j+1}-\x^j) + \lambda_j \vv_j.
\end{equation}
By the subgradient inequality,
applied to $\p_j\in \partial  \phi_{k_j}(\cdot,\x^j)(\tilde{\x}^{j+1})$, we have
\begin{align*}
-(\q_j+\lambda_j &v_j)^\ttop  (\x^j   -\tilde{\x}^{j+1})+ |\tilde{\x}^{j+1}-\x^j|_{Q_j}^2 \\ %-\lambda_j \vv_j^\ttop (\x^j-\tilde{\x}^{j+1}) \\
& = \p_j^{\ttop} (\x^j-\tilde{\x}^{j+1}) \qquad\qquad \qquad\qquad \; \mbox{ (using (\ref{opt}))}\\
&\leq
\phi_{k_j}(\x^j,\x^j) - \phi_{k_j}(\tilde{\x}^{j+1},\x^j)   \qquad \quad\mbox{ (subgradient inequality)}\\
&= f(\x^j) - \phi_{k_j}(\tilde{\x}^{j+1},\x^j)    \\
&=f(\x^j) - \Phi_{k_j}(\tilde{\x}^{j+1},\x^j) +\textstyle \frac{1}{2}|\tilde{\x}^{j+1}-\x^j|_{Q_j}^2 \\
&\leq \gamma^{-1} \theta^{-1}\left( f(\x^j)-f(\x^{j+1})\right) + \textstyle\frac{1}{2}|\tilde{\x}^{j+1}-\x^j|_{Q_j}^2 \quad\; \mbox{(by (\ref{descent}))}.
\end{align*}
%using (\ref{descent}), and the normal cone condition for $B(\x^j,R_j)$ at $\tilde{\x}^{j+1}$,  which gives $\vv_j^\ttop (\x^j- \tilde{\x}^{j+1})\leq 0$. 
Re-arranging,  we obtain
\begin{equation}
\label{ncc}
(\q_j+\lambda_jv_j)^\ttop  (\tilde{\x}^{j+1}-\x^j)+ \textstyle\frac{1}{2} |\tilde{\x}^{j+1}-\x^j|_{Q_j}^2 %+ \lambda_j \vv_j^\ttop (\tilde{\x}^{j+1}-\x^j)
\leq \gamma^{-1} \theta^{-1}\left( f(\x^j)-f(\x^{j+1})\right) .
\end{equation}
By the normal cone condition for $C\cap B(x^j,R_{k_j})$ at $\tilde{\x}^{j+1}$ we have $(\q_j+\lambda_jv_j)^\ttop  (\tilde{\x}^{j+1}-\x^j) \geq 0$, 
%and similarly by the normal cone condition for the ball $B(\x^j,R_{k_j})$ at $\tilde{\x}^{j+1}$ we have $\vv_j^\ttop(\tilde{\x}^{j+1}-\x^j)\geq 0$,
hence both terms %all three terms 
on the left of (\ref{ncc}) are non-negative.
But the term on the right of (\ref{ncc}) is telescoping, hence summable due to convergence of $f(\x^j)$, so we deduce 
summability of $\sum_{j\in \mathbb N} (\q_j+\lambda_jv_j)^{T} (\tilde{\x}^{j+1}-\x^j)<\infty$ and of $\sum_{j\in \mathbb N} |\tilde{\x}^{j+1}-\x^j|_{Q_j} ^2 < \infty$.
%and of $\sum_{j\in \mathbb N} \lambda_j \vv_j^\ttop (\x^j-\tilde{\x}^{j+1}) < \infty$.
From (\ref{opt}) we then get summability of 
$\sum_{j\in \mathbb N}\p_j^\ttop (\x^j-\tilde{\x}^{j+1})$.
Hence
$\p_j^{\ttop} (\x^j-\tilde{\x}^{j+1})\to 0$,  $(\q_j+\lambda_jv_j)^\ttop(\x^j-\tilde{\x}^{j+1})\to 0$,   % $\lambda_j \vv_j^\ttop(\x^j-\tilde{\x}^{j+1})\to 0$, 
$|\x^j-\tilde{\x}^{j+1}|_{Q_j} \to 0$.
Moreover, we know by local boundedness of the subdifferential that the sequence $\p_j$ is bounded, hence the sequence 
$\q_j + \lambda_j \vv_j$ is also bounded.
\\

2)
Now let $\x^*$ be an accumulation point of the sequence $\x^j$. We have to show that $\x^*$ is a critical point of (\ref{program}). Fix a subsequence $j\in J$ such that
$\x^j \to \x^*$, and $\p_j \to \p$, $\q_j+\lambda_j \vv_j \to \q$, $\tilde{\x}^{j+1} \to \tilde{\x}$ for suitable limits $\p,\q,\tilde{\x}$.
%This is possible because the sequence $\p_j$ is bounded due to local boundedness of the oracle, $\|\vv_j\|=1$, and by boundedness of the sequence $Q_j$.
%As a consequence of (\ref{opt}) the sequence $\q_j + \lambda_j \vv_j$ is then also bounded. 
We shall now analyze two types of infinite subsequences $j\in J$, those where the trust-region constraint is active at  $\tilde{\x}^{j+1}$
and the Lagrange multiplier of the trust-region constraint is nonzero, i.e. $\lambda_j >0$ in (\ref{kt}),
and those where the Lagrange multiplier of the trust-region constraint vanishes, i.e.,  $\lambda_j=0$ in (\ref{kt}). 
\\

3)
Let us start with the simpler  case of an infinite subsequence $\x^j$, $j\in J$, where 
the Lagrange multiplier of the trust-region constraint vanishes, i.e., $\lambda_j=0$ in (\ref{kt}). That occurs either when
$\|\x^j-\tilde{\x}^{j+1}\| < R_{k_j}$,
i.e., where the trust-region constraint is inactive at acceptance, or when it is active but with vanishing multiplier. 
In this case equation (\ref{opt}) simplifies to
\[
0 = \g_j^* + Q_j(\tilde{\x}^{j+1}-\x^j)=\p_j +  \q_j +  Q_j(\tilde{\x}^{j+1}-\x^j).\]
This means $\p_j \to \p$, $\q_j \to \q$ and $\p + \q = 0$, bearing in mind that we have $|\tilde{\x}^{j+1}-\x^j|_{Q_j}\to 0$.

%Recall that our cutting plane oracle gives rise to an upper envelope model
%$\phi$ with $\phi_k \leq \phi$ on a bounded set containing all serious iterates $\x^j$ and
%all trial points $\y^k,\z^k$ visited during the corresponding inner loops before acceptance.  (By strictness of $\mathscr O$
%the model $\phi$ is also strict). 
Now
let ${\h}$ be any test vector, then from the subgradient inequality, the acceptance condition, and using $\phi_k \leq \phi$, we have
\begin{align*}
\p_j^{\ttop} {\h} &\leq \phi_{k_j}(\tilde{\x}^{j+1}+{\h},x^j) - \phi_{k_j}(\tilde{\x}^{j+1},\x^j) \\
&\leq \phi(\tilde{\x}^{j+1}+{\h},\x^j) - f(\x^j) + f(\x^j) - \phi_{k_j}(\tilde{\x}^{j+1},\x^j) \\
&= \phi(\tilde{\x}^{j+1}+{\h},\x^j) - f(\x^j) + f(\x^j) - \Phi_{k_j}(\tilde{\x}^{j+1},\x^j) +\textstyle \frac{1}{2}|\x^j-\tilde{\x}^{j+1}|_{Q_j}^2\\
&\leq \phi(\tilde{\x}^{j+1}+{\h},\x^j) - f(\x^j) + \gamma^{-1}\theta^{-1} \left( f(\x^j)-f(\x^{j+1}) \right) +\textstyle \frac{1}{2}|\x^j-\tilde{\x}^{j+1}|_{Q_j}^2.
\end{align*}
Let ${\h}'$ be another test vector and put ${\h} = \x^j-\tilde{\x}^{j+1}+{\h}'$. 
On substituting this expression above we obtain
\begin{align*}
&\p_j^{\ttop} (\x^j-\tilde{\x}^{j+1}) + \p_j^{\ttop} {\h}' \leq \phi(\x^j+{\h}',\x^j) - f(\x^j)\\
& \hspace{3cm} + \gamma^{-1}\theta^{-1} \left(  f(\x^j)-f(\x^{j+1})\right)+\textstyle \frac{1}{2}|\x^j-\tilde{\x}^{j+1}|_{Q_j}^2.
\end{align*}
Passing to the limit $j\in J$, 
we have $\p_j^{\ttop} (\x^j-\tilde{\x}^{j+1})\to 0$ and $|\x^j-\tilde{\x}^{j+1}|_{Q_j}\to 0$ by 1) above, and
$f(\x^j)-f(\x^{j+1})\to 0$ by the construction of the descent method. 
Moreover, $\limsup_{j\in J} \phi(\x^j+{\h}',\x^j) \leq \phi(\x^*+{\h}',\x^*)$
by $\x^j\to \x^*$ and axiom $(M_3)$.  Since $\p_j\to \p$, we get
\[
\p^{\ttop} {\h}' \leq \phi(\x^*+{\h}',\x^*) - f(\x^*)=\phi(\x^*+{\h}',\x^*)-\phi(\x^*,\x^*).
\]
Since ${\h}'$ was arbitrary and $\phi(\cdot,\x^*)$ is convex, we deduce
$\p\in \partial_1 \phi(\x^*,\x^*)$, hence $\p\in \partial f(\x^*)$ by axiom $(M_1)$.

Now  we have to show that $\q\in N_C(\x^*)$. Since $\q_j^\ttop (\x^j-\tilde{\x}^{j+1})\to 0$ due to $\lambda_j=0$ and by part 1) above, 
and since $\q_j \to \q$, 
we have $\q^\ttop (\x^*-\tilde{\x})=0$. 
Since $\q_j \in N_C(\tilde{\x}^{j+1})$ and $\tilde{\x}^{j+1} \to \tilde{\x}$, we know that
$\q \in N_C(\tilde{\x})$. 
Hence for any element
$\x\in C$ we have $\q^\ttop(\tilde{\x}-\x) \geq 0$ by the normal cone criterion. Hence
$\q^\ttop (\x^*-\x) = \q^\ttop(\tilde{\x}-\x) + \q^\ttop (\x^*-\tilde{\x}) =  \q^\ttop(\tilde{\x}-\x) \geq 0$,
so the normal cone criterion holds also at $\x^*$, proving $\q\in N_C(\x^*)$. We have shown that
$0 =\p + \q \in \partial \left( \phi(\cdot,\x^*)+i_C \right)(\x^*) \subset \partial f(\x^*) + N_C(\x^*)$, hence
$\x^*$ is a critical point of (\ref{program}).
\\

4) Let us now consider the more complicated case of an infinite subsequence,
where $\|\x^j-\tilde{\x}^{j+1}\|=R_{k_j}$ with
$\lambda_j >0$, corresponding to the case of a non-vanishing multiplier
in (\ref{kt}). % In other words, the trust-region constraint is active at $\tilde{x}^{j+1}$.
Recall that $\x^j\to \x^*$, $j\in J$, and that
we have to show that $\x^*$ is critical.

We shall now have to distinguish two subcases.
Either $R_{k_j}\geq R_0 >0$ for some $R_0>0$ and all $j\in J$,
or there exists a subsequence $J'\subset J$ such that $R_{k_j}\to 0$ as $j\in J'$.
The first case is discussed in 5), the second case will be handled in 6) - 7).

5)
Let us 
consider the sub-case of an infinite subsequence $j\in J$ where $\|\x^j-\tilde{\x}^{j+1}\|=R_{k_j} \geq R_0 > 0$ for every $j\in J$. 
Recall from the general considerations in part 1) that $|\tilde{\x}^{j+1}-\x^j|_{Q_j} \to 0$. Since on the other hand we are in the case where
$\|\tilde{\x}^{j+1}-\x^j\| = R_{k_j} \geq R_0 > 0$ for $j\in J$, Lemma \ref{est} provides a constant $\sigma > 0$
independent of $j$ such that
\[
f(\x^j) - \Phi_{k_j}(\tilde{\x}^{j+1},\x^j) \geq \sigma \|\g_j^*\| \|\tilde{\x}^{j+1}-\x^j\| \geq \sigma R_0 \|\g_j^*\|,
\]
for all $j \in J$. By acceptance $\rho_{k_j} \geq \gamma$  and (\ref{trial}) that gives 
$$f(\x^j)-f(\x^{j+1}) \geq \gamma\theta \left(f(\x^j) - \Phi_{k_j}(\tilde{\x}^{j+1},\x^j)\right)  \geq \gamma\theta\sigma R_0 \|\g_j^*\|$$
and implies  $\g_j^*\to 0$. Splitting $\g_j^* = \p_j + \q_j \to 0$ with $\p_j\in \partial_1 \phi_{k_j}(\tilde{\x}^{j+1},\x^j)$
and $\q_j \in N_C(\tilde{\x}^{j+1})$,  recall that $\p_j \to \p$ and $\q_j + \lambda_j \vv_j \to \q$, and 
$0 = \g_j^* + Q_j(\tilde{\x}^{j+1}-\x^j) + \lambda_j\vv_j \to \p + \q$, hence $\p + \q = 0$ and $\lambda_j \vv_j \to 0$, i.e., $\lambda_j \to 0$, 
and we find $\q_j \to \q$. Since $\tilde{\x}^{j+1} \to \tilde{\x}$, we infer $\q \in N_C(\tilde{\x})$.
From here on we can argue as in part 2), which means $\x^*$ is critical, where in the last argument we infer $\q_j^\ttop(\x^j-\tilde{\x}^{j+1})\to 0$
from $\lambda_j \to 0$.
\\

6)
It remains to discuss the most complicated sub-case of an infinite subsequence $j\in J$, where the
trust-region constraint is active with non-vanishing multiplier $\lambda_j > 0$
and $R_{k_j}\to 0$. This needs two sub-sub-cases.
The first of these is a sequence $j\in J$ where in each $j^{\rm th}$ outer loop the trust-region radius 
was reduced at least once. The second sub-sub-case are infinite subsequences 
where  the trust-region radius stayed frozen ($R_j^\sharp = R_{k_j}$) throughout the $j^{\rm th}$ inner loop
for every $j\in J$. This is discussed in 6) below.

Let us first consider the case of an infinite sequence $j\in J$  where $R_{k_j}$ is active
at $\tilde{\x}^{j+1}$ with $\lambda_j > 0$,  and  $R_{k_j}\to 0$, $j\in J$, and  where
during the $j^{\rm th}$ inner loop the trust-region radius was reduced at least once.
Suppose this happened  the last time before acceptance at inner loop counter $k_j-\nu_j$
for some $\nu_j\geq 1$.
Then for $j\in J$,
\[
R_{k_j} = R_{k_j-1} = \dots = R_{k_j-\nu_j+1} =\textstyle \frac{1}{2}R_{k_j-\nu_j}.
\]
By step 7 of the algorithm, that implies
\[
\widetilde{\rho}_{k_j-\nu_j} \geq \widetilde{\gamma}, \quad \rho_{k_j-\nu_j} < \gamma.
\]
Now $\|\tilde{\x}^{j+1}-\x^j\| = R_{k_j}$, $\|\x^{j+1}-\x^j\| \leq \Theta R_{k_j}$ and $\|\z^{k_j-\nu_j}-\x^j\|\leq \Theta R_{k_j-\nu_j-1} = 2\Theta R_{k_j}$, hence
$\x^{j+1}-\z^{k_j-\nu_j} \to 0$, $\x^j-\z^{k_j-\nu_j}\to 0$, $j\in J$.

By strictness $(\widehat{\mathscr O}_3)$, and due to convergence $\z^{k_j-\nu_j}\to \x^*$ as well as $\x^j\to \x^*$, 
there exists at least one cutting plane
$m_j(\cdot,\x^j) = a_j + \g_j^\ttop (\cdot-\x^j)$ with $(a_j,\g_j)\in \mathscr O(\z^{k_j-\nu_j},\x^j)$ in tandem with
$\epsilon_j\to 0^+$ such that  $f(\z^{k_j-\nu_j}) \leq m_j(\z^{k_j-\nu_j},\x^j) + \epsilon_j \|\z^{k_j-\nu_j}-\x^j\|$.
Since by rule $(W_3)$ this cutting plane is included in the next working model $\phi_{k_j-\nu_j+1}(\cdot,\x^j)$, we have $m_j(\cdot,\x^j) \leq \phi_{k_j-\nu_j+1}(\cdot,\x^j)$, hence we obtain
$$
f(\z^{k_j-\nu_j}) \leq \phi_{k_j-\nu_j+1}(\z^{k_j-\nu_j},\x^j) + \epsilon_j \|\z^{k_j-\nu_j}-\x^j\|.
$$
Let %us denote the aggregate subgradient at $\y^{k_j-\nu_j}$ by
$\widetilde{\g}_j \in \partial\left(\phi_{k_j-\nu_j}(\cdot,\x^j)+i_C  \right)(\y^{k_j-\nu_j})$ denote the aggregate subgradient at $y^{k_j-\nu_j}$.
By Lemma \ref{est3}  we have 
$f(\x^j)-\Phi_{k_j-\nu_j}(\z^{k_j-\nu_j},\x^j) \geq \sigma \|\widetilde{\g}_j\| \| \x^j-\z^{k_j-\nu_j}\|$ for a
constant $\sigma$ independent of $j$. 
Now recall that $\x^j\to \x^*$  and that we
have to show that $\x^*$ is critical. It suffices to show that there is a subsequence $j\in J'$ with $\widetilde{\g}_j\to 0$.
%This argument uses the fact that $\z^{k_j-\nu_j}-\x^j\to 0$.

Assume on the contrary  that $\|\widetilde{\g}_j\|\geq \eta > 0$ for every $j\in J$. Then
\[
f(\x^j) - \Phi_{k_j-\nu_j}(\z^{k_j-\nu_j},\x^j) \geq \eta\sigma \|\z^{k_j-\nu_j}-\x^j\|.
\]
Now 
expanding the test quotient at $\z^{k_j-\nu_j}$ gives
\begin{align*}
\widetilde{\rho}_{k_j-\nu_j} &= \rho_{k_j-\nu_j}+ \frac{f(\z^{k_j-\nu_j}) - \phi_{k_j-\nu_j+1}(\z^{k_j-\nu_j},\x^j)}{f(\x^j)-\Phi_{k_j-\nu_j}(\z^{k_j-\nu_j},\x^j)}\\
&\leq
\rho_{k_j-\nu_j}+ \frac{\epsilon_j \|\z^{k_j-\nu_j}-\x^j\|}{\eta\sigma \|\z^{k_j-\nu_j}-\x^j\|} < \widetilde{\gamma}
\end{align*}
for $j\in J$ sufficiently large, contradicting $\widetilde{\rho}_{k_j-\nu_j} \geq \widetilde{\gamma}$. This shows that
there must exist a subsequence $J'$ of $J$ such that $\widetilde{\g}_j\to 0$, $j\in J'$. Passing to the limit
$j\in J'$, we use the argument of part 2) to show that
$0\in \partial \left( \phi(\cdot,\x^*)+i_C \right)(\x^*)$, hence $\x^*$ is critical for (\ref{program}).
\\

7)
Now consider an infinite subsequence $j\in J$ where $\x^j\to \x^*$,  the trust-region radius $R_{k_j}$ was active
at $\tilde{\x}^{j+1}$ with non-zero multiplier $\lambda_j > 0$ when $\x^{j+1}$ was accepted, where $R_{k_j}\to 0$,
but where during the $j^{\rm th}$ inner loop the trust-region radius was never reduced. 
%(In the classical case this can only
%happen when $\x^{j+1}$ at $j$ is immediately accepted, but with bundling
%this could also happen when the inner loop adds cutting planes for a time, while the test in step 7 keeps
%$R_{k+1}=R_k$ in the inner loop). 
Since $R_{k_j}\to 0$, the work to bring the radius to
0 must be  put about somewhere else outside $J$. For every $j\in J$ define $j' \in \mathbb N$ to be the largest index $j' < j$
such that in the $j'$th inner loop, the trust-region radius was reduced at least once.
This means that in none of the loops $j'+1,\dots,j$ was the trust-region radius reduced.  As a consequence,
$$R_{k_{j'}} \leq R_{k_{j'+1}} \leq \dots \leq R_{k_j} \to 0,$$ so all the trust region radii at acceptance in any of the loops between
$j'$ and $j$ tend to 0. Indeed, all that can happen is that due to good acceptance the trust-region radii are increased at acceptance (see step 8), so
the largest among them is $R_{k_j}$. 

Let $J' = \{j': j\in J\}$, where we understand $j\mapsto j'$ as a function. Passing to a subsequence of $J,J'$,
we may assume that $\x^{j'}\to \x'$ and $\g_{j'}^* \to 0$, the latter because the sequence $J'$ corresponds to one of
the cases discussed in parts 2) - 5). Passing to yet another subsequence, we may arrange that the sequences
$J,J'$ are interlaced. That is, $j' < j < j'^{+} < j^+ < j'^{++} < j^{++} < \dots \to \infty$. This  since
$j'$ tends to $\infty$ as a function of $j$.

Now assume that there exists $\eta > 0$ such that $\|\g_j^*\|\geq \eta$ for all $j\in J$. Then since $\x^j \to \x^*$,
we also have $\x^{j+1}\to \x^*$ due to $R_{k_j}\to 0$ and the second part of  rule (\ref{trial}).
Fix $\epsilon > 0$ with $\epsilon < \eta$. For $j\in J$ large enough we have $\|\g_{j'}^*\| < \epsilon$, because
$\g_{j'}^*\to 0$, $j'\in J'$, and as $j$ gets larger, so does $j'$. That means in the interval $[j',j)$ there exists an index
$j'' \in \mathbb N$ such that
\[
\|\g_{j''}^*\| < \epsilon, \quad \| \g_{i}^*\| \geq \epsilon \,\mbox{ for all } i = j'' + 1,\dots,j.
\]
The index $j''$ may coincide with $j'$, it might also be larger, but it precedes $j$. In any case, $j \mapsto j''$ is again a function on $J$ and defines another infinite index set
$J''$ still interlaced with $J$.

Now since $|\x^i-\x^{i+1}|_{Q_i}\to 0$, and at the same time $\|\g_i^*\|\geq \epsilon$  for $i=j''+1,\dots,j$,   we can use Lemma \ref{est3} to get estimates of the form
$\sigma \|\g_i^*\| \|\x^i - \x^{i+1}\| \leq f(\x^{i})- f(\x^{i+1})$ with $\sigma$ independent of $i\in [j''+1,\dots,j]$ and $j$ large enough. Summation gives
\[
\sigma \sum_{i=j''+1}^{j} \|\g_i^*\| \|\x^i - \x^{i+1}\| \leq  f(\x^{j''+1})-f(\x^{j+1})  \to 0 \qquad (j\in J, j\to\infty, j\mapsto j'').
\]
Since $\|\g_i^*\| \geq \epsilon$ for all $i \in [j''+1,\dots,j]$,  the sequence
$\sum_{i=j''+1}^j \|\x^i-\x^{i+1}\|\to 0$ converges as $j\in J, j\to \infty$.   By the triangle inequality,
$\x^{j''+1}-\x^{j+1}\to 0$. Therefore $\x^{j''+1}\to \x^*$, and also $\tilde{\x}^{j''+1}\to \x^*$ because $\| \x^{j''+1}-\tilde{\x}^{j''+1}\| \leq 
\|\x^{j''+1} - \x^{j''}\| + \|\x^{j''} - \tilde{\x}^{j''+1}\| \leq \Theta R_{k_{j''}} + R_{k_{j''}} \to 0$.

Since  as an aggregate subgradient 
$\g^*_{j''}\in \partial \left(  \phi(\cdot,\x^{j''}) + i_C\right)(\tilde{\x}^{j''+1})$, % \subset \partial f(\tilde{\x}^{j''+1})+N_C(\tilde{\x}^{j''+1})$,
passing to yet another subsequence and using local boundedness of $\partial f$,
we get $\g_{j''}^* \to {\g}^*$. But with $x^{j''}\to \x^*$ and model property $(M_3)$
we get $g^*\in  \partial(\phi(\cdot,\x^*)+i_C)(\x^*)$.  % f(\x^*) +N_C(\x^*)$. Since $\|\g_{j''}^*\| < \epsilon$, we have $\|{\g}^*\| \leq \epsilon$.
It follows that $\partial (\phi(\cdot,\x^*)+i_C)(\x^*)$ contains an element $\g^*$ of norm less than or equal to $\epsilon$.  
As $\epsilon < \eta$ was arbitrary, we conclude that $0\in\partial (\phi(\cdot,\x^*)+i_C)(\x^*) \subset  \partial f(\x^*)+N_C(\x^*)$.
That settles the remaining case.
%\hfill $\square$
\end{proof}

\begin{remark}
As the proof reveals, strictness of the cutting plane oracle is not needed if the trust region radius
stays bounded away from 0, or if the trust region constraint is not strongly active.
\end{remark}

%Another observation is the following. Consider the case $C=\mathbb R^n$, then we do not need the full force of the model property $(M_1)$ of $\phi$,
%respectively, of $(\mathscr O_2)$. We only need that $0\in \partial_1\phi(\x,\x)$ implies $0 \in \partial f(\x)$. Similarly, instead of
%$(\mathscr O_2)$ we can use the weaker form 

%\begin{itemize}
%\item[$(\mathscr O_2')$] If
% $(a_j,\g_j)\in \mathscr O(\z_j,\x)$ with $a_j \to f(\x)$, $\|\z_j-\x\| \leq M$ and $\g_j \to 0$, then
%$0 \in \partial f(\x)$.
%\end{itemize}

\section{Applications}
\label{applications}
A natural question is why the secondary test
(\ref{secondary}) in step 7 of the algorithm is required. A partial answer is that  convergence fails if the test is removed without substitute, as shown in our examples in \cite{loja,ANR:2016}.
In exchange, sometimes
the secondary test (\ref{secondary})  becomes redundant. There are at least three
situations where this happens.

The first case is of course the convex tangent plane oracle $\mathscr O^{\rm tan}$, where cutting planes are tangents to $f$ and $Q=0$. Here  $\rho_k = \widetilde{\rho}_k$,
so for a null step the trust region radius is fixed. This is  corroborated by \cite{rus}, where convergence is proved with
a  trust region radius fixed all along. The resulting method then resembles the cutting plane method.

\subsection{The all-tangents-oracle}

A second case is
when a model $\phi$ is available to construct an oracle  and we choose the working model as $\phi_k = \phi$ at all $k$.  This is allowed and
corresponds to choosing as oracle $\mathscr O^{\rm all} (\z,\x)$ the set of {\em all tangents to}  $\phi(\cdot,\x)$ at all points $\z' \in B(\x,M)$. Then always $\widetilde{\rho}_k \geq 1$,
hence the trust region radius is {\em always} reduced at a null step.  The price to pay for this simplification is that the tangent program is no longer of a simple structure.

\begin{example}
{\bf (Proximal point method)}.
Let us show that splitting techniques  fit nicely into the framework of the all-tangents-oracle $\mathscr O^{\rm all}$.
Consider $f = g + h$ with $g$ convex and $h$ of class $C^1$. We know that
$\phi(\y,\x) = g(\y) + h(\x)+\nabla h(\x) (\y-\x)$ is a strict model of $f$, because here the Clarke subdifferential is additive
due to strict differentiability of $h$. Now we use $\mathscr O^{\rm all}$, that is, we choose
as  working model $\phi_k(\cdot,\x)=\phi(\cdot,\x)$.  
As second order working model 
we choose 
\[
\Phi(\y,\x) = \phi(\y,\x) + \textstyle\frac{1}{2r_\x} \|\y-\x\|^2,
\]
where $r_\x > 0$ may depend on  the serious iterate $\x$. That corresponds to $Q(\x) = (1/r_\x) I$. 
Note that if we add the constant term $(r_\x/2 )\|\nabla h(\x)\|^2-h(\x)$, then the tangent program
(\ref{tangent}) becomes
\begin{equation}
\label{tangent_prox}
\x^+= \argmin_\y\Phi(\y,\x) -h(\x) + \textstyle\frac{r_\x}{2}\|\nabla h(\x)\|^2 = \displaystyle\argmin_\y g(\y) + \textstyle\frac{1}{2r_\x} \| \y-\x+r_\x \nabla h(\x) \|^2
\end{equation}    
which  is the convex proximal point method with splitting. This means convergence theory of the convex proximal point method
with splitting is a very special case of our convergence theory.  Convexity of $h$ is not required in our approach. If $h$ is in addition convex, convergence to
a single minimum follows with Kiwiel's anchor technique \cite{kiwiel_ancre}, see also \cite{Kiw:85,kiwiel:86}.
\end{example}

\begin{example}
{\bf (Forward-backward splitting)}.
In contrast with the typical splitting literature we  do not require convexity of $g$. If  $g$ is  lower $C^2$, so that
$g + (1/2\ell_\x) \|\cdot - \x\|^2$ is convex on $B(\x,R)$ for a suitable $\ell_\x > 0$, then
we choose as first-order model
$\phi(\y,\x) = g(\y) + (1/2\ell_\x)\|\y-\x\|^2 +h(\x)+ \nabla h(\x) (\y-\x)$. Now for $r_\x < \ell_\x$ we use the second order working model
\[
\Phi(\y,\x) = \phi(\y,\x) + (1/2r_\x-1/2\ell_\x)\|\y-\x\|^2,
\]
then on adding the same constant term $-h(\x)+(r_\x/2) \|\nabla h(\x)\|^2$, and using $\mathscr O^{\rm all}$, we end up with the same tangent program (\ref{tangent_prox}), 
where now $r_\x< \ell_\x$ is required to convexify $g$ in the neighborhood $B(\x,R)$ of the current $\x$ in which trial steps are
taken, assuring a convex tangent program. 

But we can do still better, because if $g$ is only lower $C^1$, we can still use the oracle $\mathscr O^\downarrow$ for $g$ and the standard model for $h$,
so that a cutting plane is of the form $m^\downarrow(\cdot,\x) + h(\x)+ \nabla h(\x)(\cdot-\x)$. This oracle is strict. 
\end{example}

\begin{remark}
In \cite{fort,davis} the proximal point method is combined with uncertainty in the computation
of $f,\nabla h$, respectively, of $\partial g$, where Monte-Carlo, respectively, stochastic subgradient techniques cause the uncertainty. 
This now turns out a special case of
our approach \cite{inexact} to inexact subgradients and values  in the non-convex bundle method. In that work
we also allow the more challenging situation when the level of uncertainty in the data
remains unknown to the user. 
\end{remark}

\subsection{The standard model}
Let us consider the standard oracle $\mathscr O^\sharp = \mathscr O_{\phi^\sharp}$, where  
the cutting plane $(a,\g)\in \mathscr O^\sharp(\z,\x)$ requires finding the
$\g\in \partial f(\x)$ with $f^\circ(\x,\z-\x) = \g^\ttop  (\z-\x)$. While this may still be hard to compute in some cases,
we get a simplification if $\x$ is a point of strict differentiability of $f$, as then $\partial f(\x)=\{\nabla f(\x)\}$. In that case
$\phi_k(\z^k,\x)=\phi^\sharp(\z^k,\x)$, and the secondary test is again redundant. See Borwein and Moors \cite{borwein_densely_strict}
for criteria when a function $f$ is densely or almost everywhere strictly differentiable. 

In that case, we automatically have $\phi_k = \phi^\sharp$, so the secondary test is again redundant. 
We have then arranged that the trust-region tangent program is
\[
\min\{f(\x^j) + \nabla f(\x^j)(\y-\x) + \textstyle\frac{1}{2}(\y-\x^j)^\ttop Q_j (\y-\x^j): \|\y-\x^j\|\leq R, \y\in C\},
\]
which is the classical trust-region method. Convergence of this method, which the user cannot distinguish from classical
trust regions,  hinges therefore on whether
$\mathscr O^\sharp$ is strict at the accumulation points of the sequence $\x^j$. As we know, this is almost never the case
in non-smooth optimization, which gives us the explanation why classical methods as a rule do not work on non-smooth problems.


\begin{thebibliography}{99}





\bibitem{AAN:2017}
R.S.S.  de Aguiar, P. Apkarian, D. Noll. {\em
Structured robust control against mixed uncertainty.}
IEEE Transactions on Control System Technology, {\bf PP(99)}:2017, 1-11.



\bibitem{ADN:2015} P. Apkarian, M.N. Dao, D. Noll.
{\em Parametric robust structured control design.}
IEEE Transactions on Automatic Control, {\bf 60},(7):2015,1857-1869. 
 
\bibitem{na:03_1} {D. Noll, P. Apkarian}. {\em Spectral bundle method for nonconvex
maximum eigenvalue functions: first-order methods}. Math. Programming, Series B,
{\bf 104}:2005,701-727.

\bibitem{na:03_2} {D. Noll, P. Apkarian}. {\em Spectral bundle method for nonconvex
maximum eigenvalue functions: second-order methods}. Math. Programming, Series B,
{\bf 104}:2005,729-747.
 
 
 
\bibitem{tac}
P. Apkarian, D. Noll. {\em Non-smooth $H_\infty$ synthesis.} IEEE Transactions on Automatic Control, {\bf 51}(1):2006,71-86.

\bibitem{multidisk}
P. Apkarian, D. Noll. {\em Non-smooth optimization for multidisk $H_\infty$-synthesis.}  Eur. J. Control, {\bf12}(3):2006,229-244.

\bibitem{multiband}
P. Apkarian, D. Noll, {\em Nonsmooth optimization for multiband frequency domain control design}. Automatica
{\bf 43}:2007,724-731.



\bibitem{IJRNC} P. Apkarian, D. Noll. {\em Structured $H_\infty$-control of infinite-dimensional systems.}
Int. J. Robust. Nonlin. Control, {\bf 28}(9):2018,3212-3238.

\bibitem{encyc} P. Apkarian, D. Noll. {\em Optimization-based control design techniques and tools. }
Encyclopedia of Systems and Control
DOI 10.1007/978-1-4471-5102-9$\,_-$144-1. 
Springer-Verlag London 2014.
Baillieul, John; Samad, Tariq (Editors-in-Chief).


\bibitem{AN:16} P. Apkarian, D. Noll. {\em Worst case stability and performance with mixed parametric and dynamic uncertainties.}
International Journal of Robust and Nonlinear Control, {\bf 27}(8):2017,1284-1301. 

\bibitem{ANP:2008} 
P. Apkarian, D. Noll, O. Prot.
{\em A trust region spectral bundle method for non-convex eigenvalue optimization. }
SIAM Journal on Optimization, {\bf 10}(1):2008,281-306.

\bibitem{ANP:2009}
P. Apkarian, D. Noll, O. Prot. {\em A proximity control algorithm to minimize non-smooth and non-convex semi-infinite maximum eigenvalue functions. }
Journal of Convex Analysis, {\bf 16}:2009,641-666.


\bibitem{ANR:2016} P. Apkarian, D. Noll, L. Ravanbod.
{\em Non-smooth bundle trust-region algorithm with applications to robust stability.}
Set-Valued and Variational Analysis, {\bf 24}(1):2016,115-148. 

\bibitem{ANR:18} P. Apkarian, D. Noll, L. Ravanbod.
{\em Non-smooth optimization for robust control of infinite-dimensional systems.}
Set-Valued and Variational Analysis, {\bf 26}(2):2018,405-429.

\bibitem{thevenet:04}   P. Apkarian, D. Noll, J.-B. Thevenet and H.D. Tuan.   
{\em A spectral  quadratic-SDP method with applications to fixed-order $H_2$ and
$H_\infty$ synthesis}. European Journal of Control, {\bf 10}(6):2004,527-538.

\bibitem{asplund} E. Asplund. {\em Chebyshev sets in Hilbert space}, Trans. Amer. Math. Soc. {\bf 144}:1969,235-240.


\bibitem{fort} Y.F. Atchad\'e, G. Fort, E. Moulines. {\em On perturbed proximal gradient algorithms.}
{\tt arXiv:1402.2365v4 [math.ST] 19 Nov 2016}.


\bibitem{borwein_densely_strict} J.M. Borwein, W.B. Moors. {\em A chain rule for essentially strictly differentiable Lipschitz functions.}
SIAM J. Optim. {\bf 8}:1998,300-308.


\bibitem{borwein_moors}
J. M. Borwein and W. B. Moors. 
{\em Null sets and essentially smooth Lipschitz functions. }
SIAM Journal on Optimization {\bf 8}(2):1998,309-323

\bibitem{borwein_zhu} J.M. Borwein, Q.J. Zhu. 
{\em  A survey of subdifferential calculus with applications}, Nonlinear Anal., {\bf 38}:1999,687-773.



\bibitem{boyd2}  S. Boyd, V. Balakrishnan. {\em A regularity result for the singular values of a transfer matrix and a quadratically convergent algorithm for computing its $L_\infty$-norm. }
Syst. Control Letters, {\bf 15}:1990,1-7.

\bibitem{boyd1} S. Boyd, V. Balakrishnan, P. Kabamba.   {\em A bisection method for computing the $H_\infty$-norm 
of a transfer matrix and related problems}.  Mathematics of Control, Signals, and Systems, {\bf 2}:1989,207-219.

\bibitem{dao} M. N. Dao. {\em Bundle method for nonconvex nonsmooth constrained optimization.}
Journal of Convex Analysis {\bf 22}(4):2015,1061-1090.
 

\bibitem{dani}  {A. Daniilidis,  P.  Georgiev}.  {\em Approximate convexity and submonotonicity}. J. Math. Anal. Appl., {\bf 291}:2004,117-144.

\bibitem{eigen} M.N. Dao, P. Apkarian, D. Noll.  {\em Robust eigenstructure clustering by non-smooth optimization.}
International Journal of Control,  {\bf 88}(8):2015,1441-1455.

\bibitem{gwinner} M.N. Dao, J. Gwinner, D. Noll,  N. Ovcharova. {\em Non-convex bundle method with application to a delamination problem.}
Computational Optimization and Applications, {\bf 65}(1):2016,173-203. 

\bibitem{memory} M.N. Dao, D. Noll. {\em Minimizing memory effects of a system.}   Mathematics of Control, Signals and Systems, {\bf 27}(1):2015,77-110.

\bibitem{davis} D. Davis, B. Grimmer. {\em Proximally guided stochastic subgradient method for nonsmooth, nonconvex problems.}
{\tt arXiv:1707.03505v4[math.OC]17 Oct 2017}.

\bibitem{gabarrou} M. Gabarrou, D. Alazard, D. Noll. {\em Design of a flight control architecture using a non-convex
bundle method}. Mathematics of Control, Signals, and Systems, {\bf 25}(2):2013,257-290.

\bibitem{hare}  W. Hare, C. Sagastiz\'abal. {\em A redistributed proximal bundle method for non-convex optimization}. SIAM J. Optim., {\bf 20}(5):2010,2442 - 2473.



\bibitem{helm1} {C. Helmberg, K.C. Kiwiel}. {\em A spectral bundle method
with bounds}. Math. Programming, {\bf 93}:2002,173-194.

\bibitem{helm2}{C. Helmberg, F. Oustry}. {\em Bundle methods to minimize
the maximum eigenvalue function}. Handbook of Semidefinite Programming.
Theory, Algorithms and Applications. L. Vandenberghe, R. Saigal, H. Wolkowitz (eds.), vol. 27, 2000.


\bibitem{helm3}{C. Helmberg, F. Rendl}. {\em Spectral bundle method for
semidefinite programming}. SIAM J. Optimization, {\bf 10}:2000,673-696.

\bibitem{hertlein}
L. Hertlein, M. Ulbrich, {\em An inexact bundle algorithm for nonconvex nondifferentiable functions in Hilbert space}, Deutsche Forschungsgemeinschaft
(DFG). Priority Program 1962. Preprint. SPP1962-084 (2018) 1--27.


\bibitem{hiriart} J.-B. Hiriart-Urruty. {\em Generalized differentiability, duality and optimization for problems dealing with differences of convex functions}. 
Lect. Notes in Economics and Math. Systems {\bf 256}:1985,37-70.



\bibitem{kiwiel_aggregation} C. Kiwiel. {\em An aggregate subgradient method for non-smooth convex minimization}. Math.
Programming, {\bf 27}:1983,320-341.

\bibitem{kiwiel_ancre}{K.C. Kiwiel}.  {\em Proximity control in bundle methods for convex
nondifferentiable optimization}.  Math. Programming, {\bf 46}:1990,105-122.



\bibitem{Kiw:85}{K.C. Kiwiel}.  {\em Methods of descent for nondifferentiable optimization},
vol. 1133 of Springer Lect. Notes in Math., Springer Verlag, 1985. 

\bibitem{kiwiel:86}{K.C. Kiwiel}.  {\em A linearization algorithm for computing control systems
subject to singular value inequalities}. IEEE Trans. Autom. Control, {\bf AC-31}:1986,595-602. 

\bibitem{lemarechal1}{C. Lemar\'echal}.  {\em An extension of Davidson's method to
nondifferentiable problems}. Math. Programming Studies, Nondifferentiable Optimization,
M.L. Balinski and P. Wolfe (eds.), North Holland, 1975, pp. 95 - 109.

\bibitem{lemarechal2}{C. Lemar\'echal}.  {\em Bundle methods in nonsmooth optimization}.
Nonsmooth Optimization, Proc. IIASA Workshop 1977, C. Lemar\'echal, R. Mifflin (eds.), 1978.


\bibitem{LO:00}{C. Lemar\'echal, F. Oustry}.   {\em Nonsmooth algorithms to solve
semidefinite programs}. SIAM Series:  Advances in Linear Matrix Inequality Methods in Control,
pp 57 - 77, 
Society for Industrial and Applied Mathematics Philadelphia, PA, USA 2000.



\bibitem{lema-saga} C. Lemar\'echal, C. Sagastiz\'abal.
{\em Variable metric bundle methods: from conceptual to implementable forms}. Mathematical Programming {\bf 76}:1997, 393-410.



\bibitem{eps_convex} 
D.T. Luc, H. Van Ngai, M. Th\'era. {\em On $\epsilon$-monotonicity and $\epsilon$-convexity. Calculus
of Variations and Differential Equations},  (Haifa 1998), Chapman \& Hall/CRC
Research Notes in Mathematics, {\bf 410}:2000,82-100.

\bibitem{polak1}
D. Mayne, E. Polak. {\em Algorithms for the design of control systems subject to singular value inequalities}.  Math. Programming Stud., {\bf 18}:1982,112-134.

\bibitem{polak2}
D. Mayne, E. Polak,  A. Sangiovanni. {\em Computer aided design via optimization}. Automatica, {\bf 18}:1982,147-154.

\bibitem{mifflin2} R. Mifflin. {\em An algorithm for constrained optimization with semi-smooth functions}. Mathematics of Operations Research, 
{\bf 2}(2):1977,191-207.


\bibitem{mifflin3} R. Mifflin. {\em A modification and an extension of Lemar\'echal's algorithm for nonsmooth optimization,}
Math. Prog. Stud. {\bf 17}:1982,77-90.



\bibitem{cutting}   D. Noll.  {\em Cutting plane oracles to minimize non-smooth non-convex functions}.  Set-Valued Var.
Anal., {\bf 18}(3-4):2010,531-568.



\bibitem{loja} D. Noll. {\em Convergence of non-smooth descent methods using the Kurdyka-\L ojasiewicz inequality. }
Journal of Optimization Theory and Applications, {\bf 160}(2):2014,553-572.


\bibitem{inexact} D. Noll.  {\em Bundle method for non-convex minimization with inexact subgradients and function values,}
Computational and Analytical Mathematics.
Springer Proceedings in Mathematics \& Statistics, {\bf 50}:2013,555-592.

\bibitem{NPR:08} D. Noll, O. Prot, A. Rondepierre. {\em A proximity control algorithm to minimize non-smooth and non-convex functions}. Pac. J. Optim. {\bf 4}(3):2008,571-604.

\bibitem{polak3}
E. Polak, Y. Wardi. {\em A nondifferential optimization algorithm for the design of control
systems subject to singular value inequalities over the frequency range}.  Automatica, {\bf 18}:1982,267-283.

\bibitem{polak4}
E. Polak, S. Salcudean. {\em On the design of linear multivariable feedback systems via
constrained nondifferentiable optimization in $H_\infty$ space}. IEEE Trans. Autom. Control, {\bf AC-34}:1989,268-276. 

\bibitem{siam} L. Ravanbod, D. Noll, P. Apkarian.
{\em Computing the structured distance to instability.}
Proceedings of the SIAM Conference on Control and Applications, Paris, 2015, pp. 423 - 430.  

\bibitem{RNA:17} L. Ravanbod, D. Noll, P. Apkarian. 
{\em Branch and bound algorithm for the robustness analysis of uncertain systems.}
Proc. 16th IFAC Workshop on Control Applications of Optimization.
IFAC-PapersOnLine, {\bf 48}(25):2015,85-90. 



\bibitem{RNA:12} L. Ravanbod, D. Noll, P. Apkarian. 
{\em An extension of the linear  quadratic Gaussian loop transfer recovery procedure.}
IET Control Theory and Applications, {\bf 6}(14):2012,2269-2278.

\bibitem{RNRB} L. Ravanbod, D. Noll, J.-P. Raymond, J.-M. Buchot.
{\em Robustified $H_2$-control of a system with large state dimension.}
European Journal of Control, {\bf 31}:2016,59-71.


\bibitem{RW98} R.T. Rockafellar, R.J.-B. Wets.
{\em Variational Analysis}. Springer Verlag (2004).

\bibitem{rus} A. Ruszczy\'nski. {\em Nonlinear Optimization}. Princeton University Press, 2007.

\bibitem{saga} C. Sagastiz\'abal. {\em Composite proximal bundle method.}    Math. Progr.,  {\bf 140}(1):2013,189-233.

\bibitem{zowe} H. Schramm, J. Zowe. {\em A version of the bundle idea for minimizing a non-smooth function: conceptual ideas, convergence analysis, numerical results}. 
SIAM J. Opt.,  {\bf 2}:1992,121-152.

\bibitem{simoes} 
A. Simoes, P. Apkarian, D. Noll. {\em Non-smooth multi-objective synthesis with
applications}. Control Engineering Practice, {\bf 17}(11):2009,1338-1348.


\bibitem{spingarn} J.E. Spingarn. {\em Submonotone subdifferentials of Lipschitz functions}. Trans. Amer. Math. Soc.,  {\bf 264}:1981,77-89.


\bibitem{TNA:04} {J.-B. Thevenet, D. Noll and P. Apkarian}.  {\em Nonlinear spectral SDP method for
BMI-constrained problems: applications to control design.}
Informatics in Control, Automation and Robotics I, J. Braz, H. Arajo  A. Viera and B. Encarnaco (eds.),
Springer 2006, 61 - 72.

\bibitem{TNA:05}  {J.-B. Thevenet, D. Noll and P. Apkarian}.  {\em Nonsmooth methods for
large bilinear matrix inequalities: applications to feedback control}. Submitted. 

\bibitem{toland1} J. Toland. {\em Duality in nonconvex optimization}. J. Math. Anal. Appl.,  {\bf 66}:1978,399-415.

\bibitem{toland2} J. Toland. {\em A duality principle for nonconvex optimization and the calculus of variations}. Arch. Rational Mech. Anal., {\bf 71}:1979,41-61.


\bibitem{zowe2}    J. Zowe.    {\em The BT-algorithm for minimizing a non-smooth functional subject
to linear constraints}.    Non-smooth Optimization and Related Topics, F.H. Clarke, V.F. Demyanov,
F. Gianessi (eds.), Plenum Press, 1989.



\end{thebibliography}
\end{document}